\documentclass[11pt]{article}

\usepackage{amsthm, amsmath, amssymb, amsfonts, url, booktabs, tikz, setspace, fancyhdr, amsbsy}
\usepackage{fullpage}
\usepackage{hyperref, enumerate}
\usepackage{float}
\usepackage{subcaption}
\usepackage{esint}
\usepackage{verbatim}
\usepackage{multirow}
\usepackage{kotex}
\usepackage{graphicx, wrapfig}
\usepackage{comment}
\usepackage[font=small, labelfont=bf, width=\textwidth]{caption}

\newtheorem{theorem}{Theorem}[section]
\newtheorem{assumption}[theorem]{Assumption}
\newtheorem{remark}[theorem]{Remark}
\newtheorem{proposition}[theorem]{Proposition}
\newtheorem{lemma}[theorem]{Lemma}
\newtheorem{corollary}[theorem]{Corollary}

\theoremstyle{definition}
\newtheorem{definition}[theorem]{Definition}
\newtheorem{example}[theorem]{Example}

\newcommand{\abs}[1]{\left\lvert#1\right\rvert}

\newcommand{\ang}[1]{\left\langle #1 \right\rangle}

\newcommand{\R}{\mathbb{R}}

\newcommand{\dx}{\,\mathrm{d}x}

\newcommand{\dmu}{\,\mathrm{d}\mu}
\newcommand{\dy}{\,\mathrm{d}y}

\newcommand{\NN}{\mathcal{N}}

\newcommand{\tv}{\tilde{v}}
\newcommand{\te}{\tilde{e}}
\newcommand{\ps}{W^{1,p}(\Omega)}
\newcommand{\pb}{W^{1-\frac{1}{p},p}(\partial\Omega)}
\newcommand{\pis}{W^{-1,p'}(\Omega)}

\newcommand{\pa}{\partial}

\newcommand{\calP}{\mathcal{P}}

\newcommand{\sk}[1]{{\color{orange}{#1}}}
\newcommand{\ke}[1]{{\color{blue}{#1}}}

\newcommand{\innerproduct}[2]{\langle #1, #2 \rangle}
\newcommand{\calL}{\mathcal{L}}

\newcommand{\calW}{\mathcal{W}}
\newcommand{\calV}{\mathcal{V}}

\numberwithin{equation}{section}

\newcommand{\dk}[1]{{\color{purple}[DK: #1]}}
\newcommand{\dku}[1]{{\color{purple}#1}}

\DeclareMathOperator{\diver}{div}

\begin{document}

\title{Robust training and rigorous error analysis of physics-informed neural networks for the $p$-Laplace equation}

\author{Kyueon Choi,\thanks{School of Mathematics \& Computing (Mathematics), Yonsei University, Seoul, Republic of Korea. Email: \tt{kyueon92@gmail.ac.kr}}
~Seungchan Ko\thanks{Graduate School of AI for Math, Korea Advanced Institute of Science and Technology, Daejeon, Republic of Korea. Email: \tt{ksm0385@gmail.com}}
~and ~Dohyun Kwon\thanks{School of Mathematics \& Computing (Computational Science \& Engineering), Yonsei University, Seoul, Republic of Korea. 
Email: \tt{dohyunkwon@yonsei.ac.kr}}
\thanks{Center for AI and Natural Sciences, Korea Institute for Advanced Study, Seoul, Republic of Korea.}
}




\date{~}

\maketitle

~\vspace{-1.5cm}

\begin{abstract}
    With the rise of scientific machine learning, physics-informed neural networks (PINNs) have been extensively applied to a wide range of problems. Nevertheless, most theoretical analyses of PINNs remain confined to linear equations, and a substantial gap persists between PINNs and classical numerical analysis for nonlinear problems. To address this issue, we propose a robust training framework for PINNs solving nonlinear partial differential equations, together with a rigorous error analysis. Specifically, for the $p$-Laplace equation, we introduce a novel loss formulation that combines a dual residual loss measured in $W^{-1,p'}$ with a boundary loss measured in a fractional Sobolev norm $W^{1-\frac{1}{p},p}$. This formulation is designed to accommodate the limited regularity of weak solutions and enables us to establish rigorous \textit{a priori} and \textit{a posteriori} error estimates. Moreover, the proposed framework and its analysis are extended to a parametric setting in which the exponent, the source term, and the boundary condition may all vary with the parameters. Finally, we present numerical experiments that substantiate our theoretical findings.
\end{abstract}

\noindent{\textbf{Keywords:} Physics-informed neural networks, $p$-Laplace equation, nonlinear equations, robust training, convergence analysis, error estimates, parametric problems}

\smallskip

\noindent{\textbf{AMS Classification:} 65N12, 65N15, 68T07}


\section{Introduction}

In recent years, the intersection of classical scientific computing and modern machine learning has given rise to a new interdisciplinary domain known as scientific machine learning. This emerging field introduces a new framework for solving partial differential equations (PDEs), offering a potential alternative to classical numerical methods. At the forefront of this development are physics-informed neural networks (PINNs) \cite{pinn01, pinn02}, which integrate physical principles directly into deep neural network architectures. By exploiting the universal approximation property of neural networks, PINNs are capable of learning solutions to PDEs while adhering to the underlying physics laws. Owing to their conceptual simplicity and remarkable flexibility, PINNs have rapidly gained attention and have been successfully applied to a wide range of problems in computational science, including biomedical modeling \cite{pinn_bio_1, pinn_bio_2}, fluid mechanics \cite{pinn_fluid_1, pinn_fluid_2, pinn_fluid_3, pinn_fluid_4, NS_comp}, uncertainty quantification \cite{pinn_uq_1, pinn_uq_2, pinn_uq_3}, and meta-material design \cite{pinn_meta_1, pinn_meta_2}. Furthermore, neural networks have proven effective in overcoming the computational challenges of high-dimensional PDEs \cite{pinn_hd_1, pinn_hd_2} and complex geometric domains \cite{PINN_field_1, PINN_domain}. 

Despite these empirical successes, the error analysis for PINNs, unlike that of many classical numerical schemes, has only recently begun to be systematically established, with initial theoretical foundations naturally focusing on linear PDEs. In this context, recent work by Zeinhofer, Masri, and Mardal \cite{zeinhofer2025unified} provides a unified framework with \textit{a priori} and \textit{a posteriori} error estimates in an abstract bilinear-form setting, covering elliptic, parabolic, and hyperbolic equations, as well as related systems. Further theoretical studies in the linear regime include uniform convergence analyses in the sample limit for linear second-order elliptic and parabolic equations using the Schauder approach \cite{shin2020convergence}, alongside error estimates for both continuous and discrete residual minimization formulations \cite{shin2023error}. Additionally, De Ryck and Mishra \cite{de2022error} provided a comprehensive error analysis for PINNs approximating linear Kolmogorov PDEs.



While these contributions have consolidated the mathematical foundations in the linear setting, the extension of such rigorous guarantees to nonlinear PDEs necessitates a fundamentally different theoretical perspective. In this direction, error bounds have been established that relate the total approximation error to the training error, the number of training samples, and the intrinsic stability properties of the underlying PDEs, encompassing 
semi-linear parabolic equations and the incompressible Euler equations 
\cite{mishra2023estimates}. Building upon this framework, subsequent analyses have bounded the total error of PINNs in terms of training and quadrature 
errors, thereby exploiting the stability of PDE solutions to derive convergence for more intricate nonlinear systems, most notably the incompressible Navier--Stokes equations \cite{de2024error}. Nevertheless, in contrast to the relatively mature theory available for linear problems, the rigorous analysis of PINNs for nonlinear PDEs remains largely underdeveloped, with existing results covering only a limited class of equations under restrictive regularity assumptions. Bridging this gap constitutes one of the central contributions of the present work: by carrying out a rigorous theoretical analysis of PINNs for the nonlinear $p$-Laplace equation, we take a further step toward narrowing the divide between physics-informed learning and classical numerical analysis for nonlinear problems.

On the other hand, despite ongoing efforts to develop error estimation frameworks for PINNs, the vast majority of these analyses remain confined to the classical $L^2$ losses. However, the fundamental deficiencies of this standard setup have been recently reported; \cite{DFRPINN, Gazoulis2023} point out that employing the standard $L^2$ loss is not well-defined when solutions exhibit low regularity or the given data is defined in the sense of distributions rather than classical functions. To overcome these limitations, they both advocated a variational approach, with \cite{shin2023error} further establishing a rigorous error estimate for this variational framework. Further applications of the variational approach for PINNs are explored in \cite{VPINN, hpvpinn, RVPINN}, with corresponding error estimates established in \cite{VPINN_apriori, VPINN_aposteriori}.

Furthermore, the use of the $L^2$ boundary loss also introduces some limitations. Specifically, in \cite{shin2023error, zeinhofer2025unified}, their rigorous analysis revealed a fundamental limitation of standard PINNs: enforcing boundary conditions via a conventional $L^2$ penalty restricts the theoretical convergence of the error to weaker Sobolev norms (e.g., bounding the error at most in $H^\frac{1}{2}$ rather than higher-order norms). This mathematical bottleneck highlights the necessity of employing more appropriate functional spaces for trace evaluation; indeed, the utilization of the $H^{\frac{1}{2}}$ norm for boundary formulations can be found in \cite{Gazoulis2023, trpinn}, where \cite{trpinn} provides a practical method for its numerical computation.

In addition to these challenges, the formulation of PINNs remains inherently problem-specific. More precisely, any modification to the parameters defining the PDEs, such as boundary conditions, initial conditions, or variable coefficients, necessitates retraining the neural network from scratch. To overcome this limitation, several studies have proposed parametric extensions of the PINN framework, either by directly incorporating parameters as inputs \cite{paraPINN1, paraPINN2} or by employing meta-learning techniques \cite{paraPINN3, paraPINN4}. These approaches enable rapid solution predictions for varying parameters without the need for extensive retraining, thereby substantially extending the scope and practicality of standard PINNs over classical numerical methods. This capability is particularly highlighted when solving non-linear equations. Specifically, classical numerical methods require iterative methods (e.g., Picard iteration, Newton's method) to compute the solution for each parameter, which results in significant computational overhead for a large number of parameters.

Building upon the aforementioned discussions, in this paper, we propose a novel training methodology for solving nonlinear PDEs with PINNs and provide a rigorous theoretical analysis thereof. To be more specific, we develop a robust training framework for PINNs applied to the $p$-Laplace equation, a prototypical nonlinear elliptic problem. Since weak solutions generally lack the classical regularity required for strong-form $L^2$-residual minimization, such formulations may be ill-posed and can exhibit severe numerical stiffness. To overcome these difficulties, we formulate the interior loss in terms of the dual residual measured in $W^{-1,p'}$, thereby aligning the training objective with the natural weak formulation of the problem, and define the boundary loss in the fractional Sobolev norm $W^{1-\frac{1}{p},p}$, accurately reflecting the trace space inherent to this formulation. Within this framework, we establish rigorous \textit{a priori} as well as \textit{a posteriori} error estimates that certify the accuracy of a trained solution. Moreover, motivated by the recent work of Kaltenbach and Zeinhofer \cite{pPINN}, we further extend the proposed methodology and its error analysis to parameter-dependent $p$-Laplace problems. Before proceeding further, we note that, in a recent work \cite{muga2026residual}, a dual-norm residual minimization for the $p$-Laplacian has been explored within the classical finite element method framework to derive a C\'ea-type lemma and \textit{a posteriori} error estimates under homogeneous Dirichlet boundary conditions. To the best of our knowledge, however, a comparable rigorous foundation for neural network-based approaches has not yet been established, particularly in the presence of general non-homogeneous boundary data and in parametric settings. The main contributions of this paper are summarized as follows.

\begin{itemize}
    \item[1.] \textbf{Robust training via dual and fractional Sobolev norms}: Since weak solutions of the $p$-Laplace equation typically lack higher-order regularity, the standard $L^2$ residual minimization may be ill-posed. To remedy this, we formulate the interior loss in terms of the $W^{-1,p'}$ dual norm of the PDE residual, and measure the boundary residual in the fractional Sobolev norm $W^{1-\frac{1}{p},p}$. This formulation aligns the training objective with the natural functional setting of the problem and yields a mathematically rigorous characterization of both interior and trace errors.
    
    \item[2.] \textbf{Rigorous error estimates}: We carry out a systematic error analysis for the proposed robust PINN formulation. Exploiting the monotonicity of the $p$-Laplace operator, we derive \textit{a priori} error bounds that guarantee the convergence of the proposed method, together with \textit{a posteriori} estimates that directly relate the training loss to the actual solution error. These results ensure that minimizing the proposed loss functional guarantees convergence to the true solution, even in the absence of higher regularity.
    
    \item[3.] \textbf{Extension to parametric problems}: We generalize the proposed framework and its error analysis to parametric $p$-Laplace problems, allowing for variable exponents, source terms, and boundary conditions. Within a single unified training process, the resulting model generalizes across a broad family of $p$-Laplace problems and enables near real-time solution prediction for varying parameters, thereby highlighting a computational advantage over classical numerical methods.
\end{itemize}

The remainder of this paper is organized as follows. Section \ref{sec:prelim} introduces the necessary notations and auxiliary tools required for our theoretical analysis, followed by a brief overview of the $p$-Laplace equation. In Section \ref{sec:cons}, we investigate the specific challenges inherent in $p$-Laplacian problems, providing the mathematical motivation for our approach, and introduce a robust loss functional. Section \ref{sec:err_est} is devoted to the derivation of \textit{a priori} and \textit{a posteriori} error estimates, which are then extended to parametric problems in Section \ref{sec:err_est_para}. Subsequently, Section \ref{sec:num_exp} presents numerical experiments to validate our theoretical findings. Finally, we make a conclusion in Section \ref{sec:concl}.

\section{Preliminary}\label{sec:prelim}

\subsection{Notations and auxiliary results}

This section introduces the necessary notations and auxiliary results, which will be used throughout the paper. The symbol $C$ denotes a generic positive constant that may change from line to line. Let $\Omega \subset \mathbb{R}^d$, $d \in \mathbb{N}$. Then, for $k \in \mathbb{N} \cup \{0\}$ and $p \in [1, \infty)$, we denote the standard Lebesgue and Sobolev spaces by $L^p(\Omega)$ and $W^{k,p}(\Omega)$, respectively. Also, the space $W_0^{k,p}(\Omega) := \overline{C^\infty_0(\Omega)}^{\|\cdot\|_{W^{k,p}(\Omega)}}$ is defined as the subspace of $W^{k,p}(\Omega)$. For $p\ge1$, let $p'$ be the H\"older conjugate exponent satisfying $\frac{1}{p} + \frac{1}{p'} = 1$ and let $W^{-1,p'}(\Omega)$ denote the dual space of $W^{1,p}_0(\Omega)$, equipped with the norm
\begin{align}\label{eq:dualnorm}
    \|f\|_{W^{-1,p'}(\Omega)} := \sup_{\varphi \in W^{1,p}_0(\Omega) \setminus\{0\}} \frac{\langle f,\varphi\rangle_{W^{-1,p'}(\Omega) \times W^{1,p}_0(\Omega)}}{\|\varphi\|_{W^{1,p}(\Omega)}}.
\end{align}
Here, $\innerproduct{\cdot}{\cdot}_{W^{-1,p'}(\Omega) \times W^{1,p}_0(\Omega)}$ represents the duality pairing of $W^{-1,p'}(\Omega)$ and $W^{1,p}_0(\Omega)$. When the context is clear, we omit the subscripts and denote the duality pairing simply by $\langle \cdot, \cdot \rangle$.
Next, we introduce the fractional Sobolev space $W^{s,p}(\Omega)$ for $s \in (0,1)$ and $p \in [1, \infty)$, which is defined as
\begin{equation*}
W^{s,p}(\Omega):=\left\{\, u\in L^{p}(\Omega)\ :\int_{\Omega}\int_{\Omega}\ \frac{|u(x)-u(y)|^p}{|x-y|^{d+sp}}\,{\rm{d}}x\,{\rm{d}}y<\infty\right\},
\end{equation*}
equipped with the norm 
\begin{equation*}
\|u\|_{W^{s,p}(\Omega)}
:=\left(\int_{\Omega}|u|^{p}\dx
\;+\;
\int_{\Omega}\int_{\Omega}\frac{|u(x)-u(y)|^{p}}{|x-y|^{d+sp}}\dx\dy\right)^\frac{1}{p}.
\end{equation*}
To handle the appropriate trace spaces in later analysis, we will consider fractional Sobolev spaces defined on the boundary $\partial\Omega$ using the ($d-1$)-dimensional surface measure. 

We now turn to several auxiliary results which will play crucial roles in our analysis. The trace theorem and the boundary-lifting operator presented below are based on \cite[Theorem 5.5 and Theorem 5.7 in Chapter 2]{necas2011direct}. These results will serve as essential tools for rigorously handling boundary terms in our subsequent error analysis.

\begin{theorem}[Trace theorem]
\label{thm:trace}
    Let $\Omega \subset \mathbb{R}^d$, $d \in \mathbb{N}$ be a bounded Lipschitz domain and $1 < p < \infty$. Then, there exists a bounded linear operator
    \[
    \gamma: W^{1,p}(\Omega) \to \pb,
    \]
    referred to as the trace operator, and a constant $C_p > 0$ depending continuously on $p$, such that 
    \begin{align*}
        \|\gamma u\|_{\pb} \le C_p\,\|u\|_{W^{1,p}(\Omega)}
    \end{align*}
    for any $u \in W^{1,p}(\Omega)$. 
\end{theorem}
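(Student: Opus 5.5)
We prove the trace theorem in the classical way (Gagliardo's argument), following the structure of the proof in \cite{necas2011direct}: localize, flatten, prove a half-space estimate, glue back.

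\textbf{Step 1: localization and flattening.} Since $\Omega$ is bounded and Lipschitz, we cover $\partial\Omega$ by finitely many open sets $U_1,\dots,U_m$. In each $U_j$, after a rigid motion, $\Omega\cap U_j$ is the region above the graph of a Lipschitz function $a_j:\R^{d-1}\to\R$. We take a smooth partition of unity $\{\psi_j\}$ subordinate to this cover. The bi-Lipschitz map $(x',x_d)\mapsto(x',x_d-a_j(x'))$ sends each piece $\psi_j u$ to a function on the half-space $\R^d_+$. This map preserves $W^{1,p}$ norms up to constants depending only on $\|\nabla a_j\|_\infty$. It also preserves the boundary measure and the Gagliardo seminorm on the graph up to similar constants, because $|x'-y'|\le |(x',a_j(x'))-(y',a_j(y'))|\le (1+\mathrm{Lip}(a_j))|x'-y'|$.

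\textbf{Step 2: estimate on $C_c^\infty(\overline{\R^d_+})$.} We define $\gamma v=v(\cdot,0)$ for smooth compactly supported $v$.

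The $L^p$ part of the trace norm follows from
\[
|v(x',0)|^p\le p\int_0^\infty |v|^{p-1}|\partial_d v|\,\mathrm{d}x_d
\]
combined with Young's inequality.

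For the seminorm, fix $x',y'\in\R^{d-1}$ and set $h=|x'-y'|$. We write
\[
v(x',0)-v(y',0)=\bigl[v(x',0)-\bar v\bigr]+\bigl[\bar v-v(y',0)\bigr],
\]
where $\bar v$ is the average of $v$ over the ball $B((x'+y')/2,h)\cap\{0<x_d<h\}$. Each bracket is bounded by an integral of $|\nabla v|$ along segments of length $\lesssim h$. Applying Hardy's inequality, or equivalently integrating in $h$ with Fubini, gives
\[
\int_{\R^{d-1}}\int_{\R^{d-1}}\frac{|v(x',0)-v(y',0)|^p}{|x'-y'|^{d-1+(1-1/p)p}}\,\mathrm{d}x'\,\mathrm{d}y'\le C\|\nabla v\|_{L^p(\R^d_+)}^p .
\]
The exponent works out because $d-1+(1-\tfrac1p)p=d+p-2$, and Hardy's inequality requires $p>1$.

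A cleaner alternative for this step is to write $v(x',0)=-\int_0^\infty\partial_d(\eta v)$ with a cutoff $\eta$, and to use the difference quotient estimate $\|v(\cdot+z,t)-v(\cdot,t)\|_{L^p}\le|z|\,\|\nabla v\|$ together with Hardy's inequality in $t$.

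\textbf{Step 3: density and gluing.} Smooth functions up to the boundary are dense in $W^{1,p}(\Omega)$ for Lipschitz $\Omega$. The trace operator on $\Omega$ is $\gamma u=\sum_j \gamma(\psi_j u)$, each term transported back to $\partial\Omega$. The resulting estimate extends by continuity to a bounded linear operator on all of $W^{1,p}(\Omega)$.

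One subtlety in gluing is that the Gagliardo seminorm on $\partial\Omega$ is nonlocal. Cross terms with $x\in U_j$, $y\notin U_j$ are handled by $|\psi_j u(x)|^p\int_{|x-y|>\delta}|x-y|^{-(d+p-2)}\,\mathrm{d}y\le C_\delta|\psi_j u(x)|^p$, which is controlled by the $L^p$ part already estimated.

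\textbf{Main obstacle: continuous dependence of $C_p$ on $p$.} We would track every constant explicitly in $p$:
\begin{itemize}
\item the Young and Hardy constants, e.g. Hardy's constant $(p/(p-1))$, continuous on $(1,\infty)$;
\item the Lipschitz distortion factors, which enter raised to powers involving $d+p-2$ and $1/p$;
\item the tail integral $C_\delta$, which depends continuously on $p$.
\end{itemize}
Each of these is a continuous function of $p\in(1,\infty)$, so the final $C_p$ is a finite product and sum of them and is therefore continuous (taking the sup over a compact $p$-interval if one wants a monotone majorant).

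The genuinely delicate point is the Hardy/Fubini step in Step 2, where the constant blows up as $p\to1$. That is acceptable, because the claim concerns $p\in(1,\infty)$ only.
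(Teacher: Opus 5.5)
Your proposal is correct and follows the argument the paper relies on. The paper gives no proof of its own; it only cites \cite[Thm.~5.5, Ch.~2]{necas2011direct}, whose proof is exactly your scheme of localization, bi-Lipschitz flattening, a Hardy-inequality estimate on the half-space, and density and gluing (your difference-quotient-plus-Hardy variant is the cleanest way to carry out Step 2), while your explicit tracking of the $p$-dependence of each constant also justifies the ``continuous in $p$'' clause that the paper merely asserts.
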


\begin{remark}
    In this context,  $\gamma u$ denotes the trace of $u$ on $\partial \Omega$ and we say that $u =g$ on $\partial \Omega$ in the sense of traces if $\gamma u = g$.
\end{remark}

\begin{theorem}[Lifting operator]
\label{thm:blift}
Let $\Omega \subset \mathbb{R}^d$, $d \in \mathbb{N}$ be a bounded Lipschitz domain and $1 < p < \infty$. Then, there exists a bounded linear operator
\[
Z : \pb \to W^{1,p}(\Omega),
\]
referred to as the lifting operator, such that for any $h \in \pb$,
\[
Zh = h \quad \text{on } \partial \Omega.
\]
Furthermore, there exists a constant $C_p > 0$, depending continuously on $p$, such that 
\[
\big\|Zh\big\|_{W^{1,p}(\Omega)} \le C_p \|h\|_{\pb}.
\]
\end{theorem}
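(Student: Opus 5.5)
The plan is the classical Gagliardo extension argument: localize, flatten the boundary by bi-Lipschitz charts, extend in the half-space by a mollification whose scale is the distance to the boundary, and glue with a partition of unity. Since $\partial\Omega$ is compact and Lipschitz, I cover it by finitely many open sets $U_1,\dots,U_N$. In each $U_j$, after a rigid motion, $\Omega\cap U_j$ is the region above the graph of a Lipschitz function $a_j$. I fix a smooth partition of unity $\{\psi_j\}$ on $\partial\Omega$ subordinate to this cover. For $h\in\pb$, the pieces $h_j:=(\psi_j h)\circ(x'\mapsto(x',a_j(x')))$ lie in $W^{1-\frac1p,p}(\R^{d-1})$ with compact support. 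Their norms are bounded by $C\|h\|_{\pb}$: multiplication by a smooth cutoff is bounded on $W^{s,p}$, and a bi-Lipschitz change of variables distorts the Gagliardo double integral only by factors depending on the Lipschitz constants and on $p$.

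The core step is the half-space lifting. For $g\in W^{1-\frac1p,p}(\R^{d-1})$ I define
\[
(Eg)(x',t):=\eta(t)\int_{\R^{d-1}}\rho(z)\,g(x'+tz)\,\mathrm{d}z,\qquad t>0,
\]
where $\rho\ge0$ is a smooth mollifier supported in the unit ball with $\int\rho=1$, and $\eta$ is a smooth cutoff equal to $1$ near $0$. This operator is linear. As $t\to0$, $Eg(\cdot,t)\to g$ in $L^p$, so the trace of $Eg$ is $g$. The $L^p$ bound $\|Eg\|_{L^p}\le C\|g\|_{L^p}$ is immediate from Young's inequality.

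The main obstacle is the gradient estimate. Since $\int\nabla\rho=0$, the derivatives can be written as differences:
\[
|\nabla Eg(x',t)|\le C\,t^{-1}\!\!\int_{|z|\le1}\!|g(x'+tz)-g(x')|\,\mathrm{d}z,
\]
up to a lower-order term coming from $\eta'$. Raising this to the power $p$ and applying Jensen's inequality gives
\[
\int_0^\infty\!\!\int_{\R^{d-1}}|\nabla Eg|^p \le C\int_{\R^{d-1}}\!\int_0^\infty\!\!\int_{|y-x'|\le t}\frac{|g(y)-g(x')|^p}{t^{p+d-1}}\,\mathrm{d}y\,\mathrm{d}t\,\mathrm{d}x'.
\]
Integrating first in $t$ over $t\ge|y-x'|$ produces the kernel $|x'-y|^{-(d-2+p)}$. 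This is exactly the Gagliardo kernel $|x'-y|^{-((d-1)+sp)}$ for $s=1-\frac1p$. Hence $\|\nabla Eg\|_{L^p}\le C\big(|g|_{W^{1-\frac1p,p}}+\|g\|_{L^p}\big)$.

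To assemble, I set
\[
Zh:=\sum_j\chi_j\,\big(Eh_j\big)\circ\Phi_j^{-1},
\]
where $\Phi_j(x',t)=(x',a_j(x')+t)$ is the bi-Lipschitz flattening map and $\chi_j$ is a cutoff supported in $U_j$ with $\chi_j=1$ on $\operatorname{supp}\psi_j$. Composition with a bi-Lipschitz map preserves $W^{1,p}$ with comparable norms. The trace of the sum is $\sum_j\psi_j h=h$, and linearity follows from linearity of each step. The constants depend only on $p$ through explicit factors: the $t$-integral constant $1/(p+d-2)$, Jensen and H\"older constants, and powers of Lipschitz constants raised to the $p$-th power. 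All of these are continuous in $p\in(1,\infty)$, which gives the claimed continuous dependence of $C_p$.
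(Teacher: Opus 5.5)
Your proposal is correct. It is the classical Gagliardo construction: localize, flatten by bi-Lipschitz charts, average at scale equal to the distance to the boundary, and glue with a partition of unity. This is the standard proof behind the paper's citation of Ne\v{c}as (Theorem 5.7 in Chapter 2); the paper does not prove the theorem itself. One small imprecision: for $\partial_t$ the cancellation does not come from $\int\nabla\rho=0$. It comes from $\partial_t\int t^{1-d}\rho(w/t)\,\mathrm{d}w=0$, i.e.\ the kernel $(d-1)\rho(z)+z\cdot\nabla\rho(z)$ has zero mean, and this yields the same difference bound.
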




We recall the Poincaré inequality, which is fundamental for the analysis of functions with vanishing boundary values. The following result is standard and can be found in \cite[Corollary 9.19]{brezis}.

\begin{lemma}[Poincar\'e inequality]
\label{lem:poi}
Let $\Omega \subset \mathbb{R}^d$, $d \in \mathbb{N}$ be a bounded domain and $1 \le p < \infty$. Then, there exists a constant $C_p>0$, depending continuously on $p$, such that
\begin{align*}
        \|u\|_{L^{p}(\Omega)} \le C_p\,\|\nabla u\|_{L^{p}(\Omega)}
    \end{align*}
for all $u \in W^{1,p}_0(\Omega)$.
\end{lemma}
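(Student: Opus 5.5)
We prove Lemma \ref{lem:poi} by the classical argument, in two parts: first the inequality for each fixed $p$, then the continuity of the constant in $p$.

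\textbf{Step 1: the inequality for fixed $p$.} Since $\Omega$ is bounded, it lies in a slab $\{x : a < x_1 < a+L\}$, where $L := \operatorname{diam}\Omega$. By density of $C_0^\infty(\Omega)$ in $W_0^{1,p}(\Omega)$, and continuity of both sides in the $W^{1,p}$ norm, it suffices to take $u\in C_0^\infty(\Omega)$ and extend it by zero to $\R^d$. Then
\begin{align*}
u(x)=\int_a^{x_1}\partial_1 u(t,x')\,\mathrm{d}t .
\end{align*}
H\"older's inequality gives
\begin{align*}
|u(x)|^p\le (x_1-a)^{p-1}\int_a^{a+L}|\partial_1 u(t,x')|^p\,\mathrm{d}t .
\end{align*}
Integrating first in $x_1$ over $(a,a+L)$ and then in $x'$ yields
\begin{align*}
\|u\|_{L^p(\Omega)}^p\le \frac{L^p}{p}\,\|\partial_1 u\|_{L^p(\Omega)}^p\le \frac{L^p}{p}\,\|\nabla u\|_{L^p(\Omega)}^p .
\end{align*}
Here we use $|\partial_1 u|\le|\nabla u|$ with the Euclidean norm of the gradient; this is the only place the choice of norm on $\nabla u$ enters. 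So the inequality holds with $C_p:=L\,p^{-1/p}$.

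\textbf{Step 2: continuity of $C_p$.} This constant is explicit. The map $p\mapsto p^{-1/p}=\exp(-\ln p/p)$ is continuous on $[1,\infty)$, so $C_p$ depends continuously on $p$. This is what makes the result usable in the parametric setting of Section \ref{sec:err_est_para}, where one needs uniform bounds for $p$ in a compact interval.

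\textbf{Main point of care.} The only real subtlety is making sure the constant is genuinely continuous in $p$, rather than merely existing for each $p$. An abstract compactness proof, or the optimal constant $\lambda_{1,p}^{-1/p}$, would require extra work to establish continuity. The explicit slab construction avoids this entirely. Continuity of the optimal constant itself is not needed, since the lemma only asserts the existence of some constant depending continuously on $p$.
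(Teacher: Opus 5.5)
Your proof is correct. The paper gives no argument of its own for this lemma and only cites Brezis (Corollary 9.19); your slab argument, integrating along $x_1$ and applying H\"older's inequality, is the standard textbook proof of that result. Your version also improves on the bare citation: the explicit constant $C_p = L\,p^{-1/p}$, which moreover satisfies $C_p \le L$, makes the clause ``depending continuously on $p$'' immediate, whereas the cited statement only asserts a constant depending on $\Omega$ and $p$, and the parametric section needs exactly this uniformity.
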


\subsection{Neural networks}
We now introduce the notation and formal definitions for the feed-forward neural networks. For a depth $L \in \mathbb{N}$ and layer widths $n_0, n_1, \dots, n_L \in \mathbb{N}$, an $L$-layer neural network is represented by a mapping $f^L : \mathbb{R}^{n_0} \to \mathbb{R}^{n_L}$ defined recursively as
\begin{equation}\label{nn_def}
\begin{aligned}
f^1(x) &= W^1 x + b^1, \\
f^\ell(x) &= W^\ell \sigma \big(f^{\ell-1}(x)\big) + b^\ell, \quad \text{for } 2 \le \ell \le L,
\end{aligned}
\end{equation}
where $W^\ell \in \mathbb{R}^{n_\ell \times n_{\ell-1}}$ and $b^\ell \in \mathbb{R}^{n_\ell}$ denote the weight matrix and bias vector of the $\ell$-th layer, respectively, and $\sigma : \mathbb{R} \to \mathbb{R}$ is a nonlinear activation function which is applied componentwise. We collect all trainable parameters into $\theta := \{(W^\ell, b^\ell)\}_{\ell=1}^L$ and define the corresponding parameter space as
\begin{equation}\label{nn_para_set}
\Theta := \left\{ \{{(W^{\ell}, b^{\ell})}\}_{\ell=1}^L : W^{\ell} \in \mathbb{R}^{n_{\ell} \times n_{\ell-1}},\,  b^{\ell} \in \mathbb{R}^{n_{\ell}} \right\}.
\end{equation}
For each $\theta \in \Theta$, the associated neural network function $u_\theta$ is given by the realization of the recursive mapping in \eqref{nn_def}, i.e., $u_\theta(x) := f^L(x)$. Accordingly, the class of feed-forward neural networks is defined as
\begin{equation}\label{nn_class}
\mathcal{N}_\Theta := \left\{ u_\theta : \theta \in \Theta \right\}.
\end{equation}
We note that if the activation function $\sigma$ is smooth, then $\mathcal{N}_\Theta \subset W^{k,p}(\Omega)$ for any $k \in \mathbb{N}$ and $p \in [1, \infty]$.

To characterize the approximation properties of neural networks in relation to solution regularity, we shall employ the following quantitative results of neural network approximation.

\begin{theorem}[Theorem 4.9 in \cite{UAT_quan_1}]\label{UAT_quan_1}
Let $\Omega \subset \mathbb{R}^d$, $d \in \mathbb{N}$ be a bounded Lipschitz domain. Moreover, let $p \in [1,\infty]$ and $k,m\in\mathbb{N} \cup \{0\}$ with $k>m$.
For any $n\in\mathbb{N}$ and any target function $u\in W^{k,p}(\Omega)$, there exists a feed-forward $\tanh$ neural
network $u_{\theta_n}\in W^{m,p}(\Omega)$ with the parameter space of dimension
$\mathcal{O}(n)$, such that the following error estimate holds: for any arbitrarily small $\tau>0$,
\[
\|u - u_{\theta_n}\|_{W^{m,p}(\Omega)}
\;\le\; C_{k,p}
\left(\frac{1}{n}\right)^{\frac{k-m-\tau}{d}}
\; \|u\|_{W^{k,p}(\Omega)},
\]
where $C_{k,p}>0$ depends on $k$ and $p$.
\end{theorem}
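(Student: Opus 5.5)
This statement is Theorem 4.9 of \cite{UAT_quan_1}, imported verbatim. I would reconstruct its proof in the style of that reference: build a local polynomial approximation through a partition of unity, then emulate that construction with $\tanh$ networks, keeping track of every error in $W^{m,p}$.

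\textbf{Extension to a cube.} Since $\Omega$ is bounded and Lipschitz, Stein's extension operator gives $Eu\in W^{k,p}(\R^d)$ with $\|Eu\|_{W^{k,p}}\le C\|u\|_{W^{k,p}(\Omega)}$. After rescaling, $\Omega$ lies in the unit cube $(0,1)^d$, so it suffices to approximate on the cube.

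\textbf{Local polynomial approximation.} Fix $N$ and cover the cube by $N^d$ subcubes of side $1/N$.
\begin{itemize}
\item On each subcube, take the averaged Taylor polynomial of degree $k-1$.
\item The Bramble--Hilbert lemma bounds the local error by $C N^{-(k-m)}|Eu|_{W^{k,p}}$ in the $W^{m,p}$ seminorm.
\item Glue the local polynomials with a smooth partition of unity $\{\phi_j\}$ subordinate to slightly enlarged subcubes, with $|D^\alpha \phi_j|\lesssim N^{|\alpha|}$.
\item The Leibniz rule then shows that the derivatives falling on $\phi_j$ are compensated by the higher-order local error estimates.
\item The result is $\|u-\sum_j \phi_j p_j\|_{W^{m,p}}\lesssim N^{-(k-m)}\|u\|_{W^{k,p}}$.
\end{itemize}

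\textbf{Emulation by $\tanh$ networks.} Each ingredient of the glued approximant must be reproduced by a $\tanh$ network.
\begin{itemize}
\item \emph{Monomials.} Finite differences of $\tanh$ in an auxiliary scaling parameter reproduce monomials up to any fixed degree, with arbitrarily small error in $C^m$ on compact sets.
\item \emph{Products.} Combining the square with the polarization identity gives approximate multiplication.
\item \emph{Partition functions.} Differences of shifted, scaled $\tanh$ approximate smooth one-dimensional bumps, and products of these give each $\phi_j$.
\item \emph{Assembly.} Summing the $N^d$ products $\phi_j p_j$ yields a network with $\mathcal{O}(N^d)$ parameters.
\item \emph{Error budget.} The emulation error can be pushed below the approximation error, at the cost of weights growing polynomially in $N$.
\end{itemize}
Setting $n\simeq N^d$ turns $N^{-(k-m)}$ into $n^{-(k-m)/d}$.

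\textbf{Main obstacle.} The hardest step is controlling the emulation error in $W^{m,p}$ rather than in $L^\infty$. Differentiating the scaled $\tanh$ constructions produces large factors, since the steep transitions in the partition functions have derivatives of size $N^{|\alpha|}$. Those factors must still be absorbed into the error budget. The $\tau$ loss in the exponent is the price paid for this.

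I would handle it by choosing the $\tanh$ slopes to be $N^{1+\epsilon}$. This makes the tails exponentially small relative to $N^{-(k-m)}$, and the polynomial blow-up of the derivatives then costs only an $\epsilon$-dependent factor. Re-expressed in terms of $n$, this gives $n^{-(k-m-\tau)/d}$ with the constant $C_{k,p}$.
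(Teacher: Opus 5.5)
Your sketch is correct and takes essentially the same route as the result the paper imports; the paper gives no proof of its own, only the citation. That route is: localized averaged Taylor polynomials with Bramble--Hilbert estimates, glued by an approximate partition of unity realized by the activation, with monomials and products emulated at the cost of large weights, and the $\tau$-loss paid for the $N^{|\alpha|(1+\epsilon)}$ derivative bounds of the sharpened $\tanh$ transitions.

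Two points of reading only:
\begin{enumerate}
\item The partition functions must be the $\tanh$ products themselves, not approximants of a compactly supported bump. So the gluing step must be rerun with their tails, which your last paragraph in effect does.
\item Your $\mathcal{O}(N^d)$ is, as in the source, a count of nonzero weights, and your construction and constant depend on $\tau$ (and on $d$, $m$, $\Omega$). It is not the dimension of a fully connected $\Theta$ as in \eqref{nn_para_set}.
\end{enumerate}
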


\begin{remark}
    Although the above result assumes $\tanh$ activation function, they still hold for many commonly used activation functions (see, e.g., the tables presented in \cite{Barron_1, UAT_quan_1}). 
\end{remark}

\begin{remark}
    If the solution has sufficient regularity, this result may lead to a useful error estimate. However, if the solution has relatively low regularity or if one considers a high-dimensional equation with a large input dimension, the above result shows that the convergence rate may deteriorate. In this case, Barron-type approximation results, which guarantee dimension-independent convergence rates, can be effectively employed (see, e.g., \cite{Barron_1, UAT_quan_2}).
\end{remark}

\subsection{Overview of \texorpdfstring{$p$}{p}-Laplace equation}

We begin our discussion by stating the standard boundary-value problem for the $p$-Laplacian. For \(1<p<\infty\), we consider the following equation:
\begin{equation} \label{p_lap}
\begin{aligned}
    -{\rm{div}}\, (|\nabla u|^{p-2}\nabla u)&=f\quad\text{in }\Omega,\\
    u&=g\quad\text{on }\partial\Omega,
\end{aligned}
\end{equation}
where the domain $\Omega \subset \mathbb{R}^d$, $d \in \mathbb{N}$ is sufficiently regular, and the data \(f\) and \(g\) are chosen as to ensure the existence of a weak solution \(u \in W^{1,p}(\Omega)\). The \(p\)-Laplacian can be understood as a canonical nonlinear generalization of the Laplace operator, with broad relevance in both modeling and analysis \cite{lindqvist2019notes}. On the modeling side, it governs nonlinear diffusion and appears in non-Newtonian fluid mechanics \cite{malek}, nonlinear elasticity and plasticity \cite{Fuchs}, image denoising and inpainting (with total variation flow emerging in the limit \(p \to 1\)) \cite{rudin1992nonlinear}, potential theory via \(p\)-harmonic functions \cite{Heinonen}, and heterogeneous or electrorheological media modeled by variable-exponent \(p(x)\)-Laplacians \cite{Ruzicka2004}. Analytically, the exponent \(p\) controls qualitative features of the operator: for \(p>2\) the operator is degenerate, while for \(1<p<2\) it is singular; moreover, it arises as the Euler--Lagrange equation of the convex functional $\mathcal{E}_p(u) = \frac{1}{p}\int_\Omega |\nabla u|^p\dx$, which ensures coercivity and yields the monotonicity of the associated operator for all \(1<p<\infty\).

These $p$-dependent features render the $p$-Laplacian a particularly 
informative benchmark for PINNs. By varying $p$, one can continuously tune 
the degree of nonlinearity as well as the level of degeneracy or 
singularity, thereby enabling a systematic assessment of the robustness of physics-informed training; a detailed numerical investigation of how the 
training behavior varies with $p$ is presented in Section~\ref{sec:num_exp}. 
Moreover, since the residual depends nonlinearly on $\nabla u$, the equation provides a nonlinear test case of the PINN formulation, while its underlying variational structure naturally suggests suitable loss functionals 
and offers a pathway toward principled \textit{a priori} and \textit{a posteriori} error analysis. Finally, by treating $p$ as an additional parameter or field, the $p$-Laplace family provides a convenient setting for investigating parametric extensions of PINNs. 

For our analysis, we briefly review the analytic framework for the $p$-Laplace equation. Our analysis is fundamentally based on the weak form of the equation, which requires a rigorous definition of weak solutions and their corresponding well-posedness. We begin by recalling the standard notion of weak solutions for \eqref{p_lap}.

\begin{definition}[Weak Solution] \label{def:weaksol}
Let $\Omega \subset \mathbb{R}^d$ be a bounded Lipschitz domain. Given $f \in W^{-1, p'}(\Omega)$ and $g \in \pb $ with $1<p<\infty$, we say that $u \in W^{1,p}(\Omega)$ is a weak solution to \eqref{p_lap} if for every test function $\phi \in W_0^{1,p}(\Omega)$, there holds
    \begin{equation} \label{weaksol}
        \int_{\Omega} |\nabla u|^{p-2} \nabla u \cdot \nabla \phi  \dx = 
        \langle f,\phi\rangle
    \end{equation}
and $u=g$ on $\partial \Omega$ in the sense of the traces of functions in $W^{1,p}(\Omega)$.
\end{definition}

Based on Theorems~\ref{thm:trace} and \ref{thm:blift}, the well-posedness of a solution of \eqref{p_lap} with suitable $f$ and $g$ is obtained. The proof can be found, for instance, in \cite[Theorem 2.16]{lindqvist2019notes}.

\begin{theorem}[Existence and uniqueness] \label{thm:exist}
    Let $\Omega \subset \mathbb{R}^d$ be a bounded Lipschitz domain. Assume that $f \in W^{-1,p'}(\Omega)$ and $g \in \pb$ with $1<p<\infty$. Then, there exists a unique weak solution of \eqref{p_lap}.
\end{theorem}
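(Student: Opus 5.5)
We reduce to homogeneous boundary data with the lifting operator and then apply monotone operator theory. Let $G := Zg \in W^{1,p}(\Omega)$ be the lifting of $g$ from Theorem~\ref{thm:blift}, so that $G = g$ on $\partial\Omega$. We seek $u = G + w$ with $w \in W^{1,p}_0(\Omega)$. Define $A: W^{1,p}_0(\Omega) \to W^{-1,p'}(\Omega)$ by
\begin{equation*}
\langle A w, \phi\rangle := \int_\Omega |\nabla (G+w)|^{p-2}\nabla(G+w)\cdot\nabla\phi \dx .
\end{equation*}
This is well defined by H\"older's inequality, since $\big||\xi|^{p-2}\xi\big| = |\xi|^{p-1}$ and $(p-1)p' = p$. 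Then $u$ is a weak solution in the sense of Definition~\ref{def:weaksol} exactly when $Aw = f$ in $W^{-1,p'}(\Omega)$. The trace condition holds automatically, because $\gamma w = 0$ for $w \in W_0^{1,p}(\Omega)$ and $\gamma$ is linear (Theorem~\ref{thm:trace}).

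\textbf{Existence.} We verify the hypotheses of the Minty--Browder theorem on the reflexive space $W^{1,p}_0(\Omega)$, normed by $\|\nabla\cdot\|_{L^p}$, which is equivalent to the full norm by Lemma~\ref{lem:poi}.
\begin{enumerate}[(i)]
\item \emph{Monotonicity.} This follows from the elementary inequality $(|a|^{p-2}a-|b|^{p-2}b)\cdot(a-b)\ge 0$, with equality only if $a=b$. It comes from convexity of $\xi\mapsto |\xi|^p/p$, whose gradient is $|\xi|^{p-2}\xi$.
\item \emph{Hemicontinuity.} For fixed $w,v,\phi$, the map $t\mapsto \langle A(w+tv),\phi\rangle$ is continuous by dominated convergence, with an integrable majorant $C(|\nabla G|+|\nabla w|+|\nabla v|)^{p-1}|\nabla\phi|$.
\item \emph{Coercivity.} We need $\langle Aw,w\rangle/\|\nabla w\|_{L^p}\to\infty$. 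Write $\langle Aw,w\rangle = \langle Aw, G+w\rangle - \langle Aw, G\rangle$. The first term equals $\|\nabla(G+w)\|_{L^p}^p$. The second term is bounded in absolute value by $\|\nabla(G+w)\|_{L^p}^{p-1}\|\nabla G\|_{L^p}$. Setting $s=\|\nabla(G+w)\|_{L^p}$, which satisfies $s \ge \|\nabla w\|_{L^p}-\|\nabla G\|_{L^p}$, we get $\langle Aw,w\rangle\ge s^p - s^{p-1}\|\nabla G\|_{L^p}$. This grows like $\|\nabla w\|_{L^p}^p$, which is superlinear since $p>1$.
\end{enumerate}
Minty--Browder then gives $w$ with $Aw=f$.

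\textbf{Uniqueness.} Suppose $u_1,u_2$ are two weak solutions. Then $u_1-u_2\in W^{1,p}_0(\Omega)$, because its trace is $g-g=0$ and, on Lipschitz domains, the kernel of $\gamma$ is $W_0^{1,p}(\Omega)$. Testing both weak formulations with $\phi=u_1-u_2$ and subtracting gives
\begin{equation*}
\int_\Omega (|\nabla u_1|^{p-2}\nabla u_1-|\nabla u_2|^{p-2}\nabla u_2)\cdot\nabla(u_1-u_2)\dx=0 .
\end{equation*}
Strict monotonicity of the integrand forces $\nabla u_1=\nabla u_2$ almost everywhere. The Poincar\'e inequality (Lemma~\ref{lem:poi}) then gives $u_1=u_2$.

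\textbf{Alternative route to existence.} One can instead minimize $J(w)=\frac1p\int_\Omega|\nabla(G+w)|^p\dx-\langle f,w\rangle$ over $W_0^{1,p}(\Omega)$ by the direct method. $J$ is coercive by the same estimate as in (iii), and it is weakly lower semicontinuous because it is convex and continuous. The minimizer satisfies the Euler--Lagrange equation $Aw=f$.

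The main obstacle is the coercivity estimate in the presence of nonzero boundary data, which is handled by the splitting above. A secondary point is the identification of $\ker\gamma$ with $W_0^{1,p}(\Omega)$, which is standard for Lipschitz domains and is cited, not proved.
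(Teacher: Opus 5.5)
Your proof is correct. The paper gives no argument of its own here. It only says that well-posedness follows from the trace and lifting theorems, and it cites Lindqvist's notes, where the Dirichlet problem is solved variationally by minimizing the $p$-Dirichlet energy with prescribed boundary values.

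Your reduction $u = G + w$, with $G = Zg$ and $w \in W^{1,p}_0(\Omega)$, is exactly the role the paper assigns to Theorem~\ref{thm:blift}. The paper relies on the same step in the proof of Proposition~\ref{lp_est}, where it uses that $\gamma u = \gamma G$ forces $u - G \in W^{1,p}_0(\Omega)$. That is also the $\ker\gamma = W^{1,p}_0(\Omega)$ fact you cite for uniqueness.

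The genuine difference is the core existence step. You apply the Minty--Browder theorem to the shifted operator $A$, whereas the cited reference uses what you list as your alternative route: it minimizes the convex energy $\frac1p\int_\Omega|\nabla v|^p\dx - \langle f, v-G\rangle$ over $G + W^{1,p}_0(\Omega)$.
\begin{itemize}
\item The monotone-operator argument never uses that $A$ is a gradient. It therefore carries over unchanged to non-variational monotone fluxes.
\item The direct method exploits the potential structure. It exhibits the solution as the unique energy minimizer, with uniqueness coming straight from strict convexity.
\end{itemize}

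Both routes rest on the same coercivity bound with lifted data. One small point is worth making explicit there. To pass from $s^{p-1}(s-\|\nabla G\|_{L^p})$ to a bound in $\|\nabla w\|_{L^p}$, note that this function is increasing for $s \ge \|\nabla G\|_{L^p}$. Once $\|\nabla w\|_{L^p} \ge 2\|\nabla G\|_{L^p}$, you may therefore substitute $s \ge \|\nabla w\|_{L^p} - \|\nabla G\|_{L^p}$. This gives $\langle Aw,w\rangle \ge (\|\nabla w\|_{L^p}-\|\nabla G\|_{L^p})^{p-1}(\|\nabla w\|_{L^p}-2\|\nabla G\|_{L^p})$, which is superlinear.
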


We then derive the following standard $W^{1,p}$-estimate for the weak solution, which will be frequently utilized in the later analysis.
\begin{proposition}\label{lp_est}
     Let $u \in W^{1,p}(\Omega)$ be a weak solution of \eqref{p_lap}. Then, there exists a constant $C_p>0$, depending continuously on $p$, such that
    \[
        \|\nabla u\|_{L^p(\Omega)}^p\leq C_p  \left(\|f\|^{p'}_{W^{-1,p'}(\Omega)} + \|g\|_{\pb}^p\right).
    \]
\end{proposition}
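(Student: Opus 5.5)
The idea is to reduce to homogeneous boundary data with the lifting operator of Theorem~\ref{thm:blift}, and then test the weak formulation \eqref{weaksol} with the difference between the solution and the lifting.

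Set $G := Zg \in W^{1,p}(\Omega)$. By Theorem~\ref{thm:blift}, $\|G\|_{W^{1,p}(\Omega)} \le C_p\|g\|_{\pb}$ and $\gamma G = g$. Since $\gamma u = g$ as well, the function $\phi := u - G$ lies in $W^{1,p}_0(\Omega)$. On a Lipschitz domain the kernel of the trace operator is exactly $W^{1,p}_0(\Omega)$, which is standard. Taking $\phi$ as the test function in \eqref{weaksol} gives
\begin{align*}
\|\nabla u\|_{L^p(\Omega)}^p = \int_\Omega |\nabla u|^{p-2}\nabla u\cdot\nabla G\dx + \langle f, u-G\rangle .
\end{align*}

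The two terms on the right are bounded separately.

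\textbf{First term.} H\"older's inequality with exponents $p'$ and $p$ bounds it by $\|\nabla u\|_{L^p(\Omega)}^{p-1}\|\nabla G\|_{L^p(\Omega)}$.

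\textbf{Second term.} The definition \eqref{eq:dualnorm} bounds it by $\|f\|_{W^{-1,p'}(\Omega)}\|u-G\|_{W^{1,p}(\Omega)}$. Lemma~\ref{lem:poi}, applied to $u-G\in W^{1,p}_0(\Omega)$, gives
\[
\|u-G\|_{W^{1,p}(\Omega)} \le (1+C_p)\,\|\nabla(u-G)\|_{L^p(\Omega)} \le (1+C_p)\bigl(\|\nabla u\|_{L^p(\Omega)}+\|\nabla G\|_{L^p(\Omega)}\bigr).
\]

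Now write $X := \|\nabla u\|_{L^p(\Omega)}$, $F:=\|f\|_{W^{-1,p'}(\Omega)}$ and $B := \|g\|_{\pb}$. Absorb everything into the left side with Young's inequality $ab\le \varepsilon a^q + C_\varepsilon b^{q'}$:
\begin{itemize}
\item For $X^{p-1}\|\nabla G\|$, use exponents $(p', p)$ to get $\varepsilon X^p + C_\varepsilon B^p$.
\item For $F X$, use exponents $(p', p)$ with the $F$ factor carrying $p'$ to get $\varepsilon X^p + C_\varepsilon F^{p'}$.
\item For $F\|\nabla G\|$, use the same exponents to get $F^{p'} + C B^p$.
\end{itemize}
Choosing $\varepsilon$ small, say $\varepsilon = 1/4$, the terms $\varepsilon X^p$ are absorbed into the left-hand side. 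This yields $X^p \le C_p(F^{p'} + B^p)$.

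The constant is assembled from the Poincar\'e constant, the lifting constant, and the Young constants. Each of these depends continuously on $p$, so $C_p$ does too.

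The argument is routine. The only points needing care are the following.
\begin{itemize}
\item One must justify $u - Zg \in W^{1,p}_0(\Omega)$, i.e.\ that the kernel of the trace operator coincides with $W^{1,p}_0(\Omega)$ on Lipschitz domains. This is standard and I would cite it.
\item One must track the Young exponents so that $f$ appears with the power $p'$ and $g$ with the power $p$, matching the statement. Taking the dual factor with exponent $p'$ in each Young application ensures this.
\end{itemize}
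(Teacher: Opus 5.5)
Your proposal is correct and follows essentially the same route as the paper: lift $g$ via $Z$, test the weak formulation with $u - Zg \in W^{1,p}_0(\Omega)$, bound the two resulting terms by H\"older/duality, Poincar\'e and Young, and absorb the $\varepsilon\|\nabla u\|_{L^p(\Omega)}^p$ terms. Your added remarks (the trace kernel being $W^{1,p}_0(\Omega)$ on Lipschitz domains, and folding the Poincar\'e factor $(1+C_p)$ into the Young step so the small coefficient can be absorbed) are points the paper uses implicitly.
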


\begin{proof}
According to Theorem~\ref{thm:blift}, there exists a lifting operator $Z:\pb \rightarrow W^{1,p}(\Omega)$ such that $G := Zg \in W^{1,p}(\Omega)$ satisfies $G = g$ on $\partial \Omega$ and 
\[
\|G\|_{W^{1,p}(\Omega)} \le C_p \|g\|_{\pb}.
\]
Setting $\phi := u - G$, we have $\phi \in W^{1,p}_0(\Omega)$ and it follows from Definition~\ref{def:weaksol} that
\begin{equation} \label{W1p est of u}
\int_\Omega |\nabla u| ^{p} \dx = \int_\Omega |\nabla u|^{p-2}\nabla u \cdot \nabla G \dx + \langle f,u-G\rangle
=: {\rm A_1} + {\rm A_2}.
\end{equation}
First, by applying H\"older's and Young's inequalities, we obtain for a sufficiently small $\epsilon > 0$ that
\[
{\rm A_1} \le \epsilon\|\nabla u\|_{L^p(\Omega)}^p + C_{\epsilon} \big\|\nabla G\big\|_{L^p(\Omega)}^p \le \epsilon \|\nabla u\|_{L^p(\Omega)}^p + C_{\epsilon,p} \|g\|_{\pb}^p.
\]
For the second term $\rm A_2$, we employ a duality estimate, the Poincar\'e inequality, and Young's inequality to obtain for a sufficiently small $\epsilon > 0$ that
\begin{align*}
    {\rm A_2} &\le \|f\|_{W^{-1,p'}(\Omega)} \|u - G\|_{W^{1,p}(\Omega)} \\
    & \le C_p \|f\|_{W^{-1,p'}(\Omega)} \big\|\nabla(u - G)\big\|_{L^p(\Omega)} \\
    &\le C_p \|f\|_{W^{-1,p'}(\Omega)}\big( \|\nabla u\|_{L^p(\Omega)} + \big\|\nabla G\big\|_{L^p(\Omega)}\big) \\
    &\le \epsilon \|\nabla u\|_{L^p(\Omega)}^p + C_{\epsilon,p} \|f\|_{W^{-1,p'}(\Omega)}^{p'} + C_p\|f\|_{W^{-1,p'}(\Omega)}^{p'} + C_p\|g\|_{\pb}^p.
\end{align*}
Finally, substituting the estimates for $\rm A_1$ and $\rm A_2$ into \eqref{W1p est of u} yields the desired estimate.
\end{proof}

\section{Robust training of PINNs for the \texorpdfstring{$p$}{p}-Laplace equation}\label{sec:cons}

In this section, we introduce a novel approach to defining the PINN loss for the $p$-Laplacian problem. As motivating examples, we first discuss the difficulties that arise when solving the $p$-Laplace equation using a standard PINN. Motivated by these challenges, we propose a loss functional based on the dual norm of the PDE residual and the fractional Sobolev norm for the boundary residual, thereby providing a mathematically rigorous framework for the $p$-Laplacian problem.

\subsection{{Standard PINNs and some challenges}}
In the standard PINN framework, the loss functional is defined as the $L^2$-norm of the residual. More specifically, the population loss functional for $p$-Laplace problem \eqref{p_lap} is given as 
\[
\mathcal{L}_{\text{strong}}(v) = \delta_r \big\|-\diver\,(\abs{\nabla v}^{p-2}\nabla v)-f\big\|_{L^2(\Omega)} + \delta_b \big\| \gamma v-g\big\|_{L^2(\partial\Omega)},
\]
where $\delta_r$ and $\delta_b$ are weight parameters and $\gamma$ is the trace operator defined in Theorem \ref{thm:trace}. Then, we aim to solve the minimization problem
\begin{align}
\label{p:strong}
    \inf_{v \in \mathcal{A}_{\text{strong}}} \mathcal{L}_{\text{strong}}(v),
\end{align}
where
\begin{equation*}
\mathcal{A}_{\text{strong}}
= \left\{v \in C^2(\Omega) : \diver\,(\abs{\nabla v}^{p-2}\nabla v) - f \in L^2(\Omega), \, \gamma v-g \in L^2(\partial \Omega) \right\}.
\end{equation*}
However, the standard PINN loss formulation exhibits several theoretical and computational drawbacks for certain classes of problems, making it inadequate for handling more general cases. For instance, even for the linear case, it was reported in \cite{DFRPINN, Gazoulis2023} that the classical strong-form PINN loss functional becomes ill-defined when the solutions lack $H^2$ regularity or the forcing term $f$ belongs to $H^{-1}(\Omega)$. Furthermore, even when the loss functional is mathematically well-defined, minimizing the discrete loss does not necessarily guarantee convergence in the continuous $L^2$ norm. Specifically, if the PDE residual takes the form of a high-frequency sine function, the discrete training loss can completely vanish while the true error blows up \cite[Example 4.1]{shin2023error}. 





We first consider a point-source problem in which \(f\) is a Dirac mass. 
While the linear case (\(p=2\)) was studied in \cite{DFRPINN}, the \(p\)-Laplace equation poses analogous challenges. 
Since \(f\) is a distribution rather than a square-integrable function, i.e., \(f \notin L^2(\Omega)\), the standard PINN formulation is not directly applicable, and a framework suited to such low-regularity data is required. The following example serves as a fundamental benchmark for this purpose.
\begin{example} \label{ex:dirac}
Let $\Omega = (-1,1)$ and fix $1<p<\infty$. The function
\[
u^*(x) = \left( \frac{1}{2} \right)^{\frac{1}{p-1}} (1 - |x|)
\]
is the weak solution of
\begin{equation*} 
-\frac{\rm{d}}{{\rm d}x}\Big(\abs{u'}^{p-2}u'\Big) = \delta_0 \quad \text{in } \Omega,
\end{equation*}
subject to the boundary condition $u(1)=u(-1)=0$, where $\delta_0$ denotes the Dirac mass centered at zero.
\end{example}

Moreover, the challenges associated with the standard PINNs extend beyond the case of singular distributions. Even under the assumption of a highly smooth $f$, the nonlinear and degenerate nature of the $p$-Laplace operator introduces severe stiffness in the loss landscape. This is illustrated by the following specific setting.

\begin{example}\label{setup:counter_example}
Let $\Omega=(-1,1)$, $1<p<\infty$, and consider the problem:
\begin{equation}\label{eq:1d_pde_general}
\begin{cases}
    -\frac{\rm{d}}{{\rm d}x}\Big(\abs{u'}^{p-2}u'\Big) = -1 & \text{in } \Omega,\\
    u(-1)=u(1) = \frac{p-1}{p} & \text{on } \pa\Omega.
\end{cases}
\end{equation}
Then the exact solution is given by:
\begin{equation*}\label{eq:u_star_general}
u^*(x)=\frac{p-1}{p}\abs{x}^{\frac{p}{p-1}}.
\end{equation*}
Indeed, the derivatives can be computed as
\[
(u^*)'(x)=\mathrm{sgn}(x)\abs{x}^{\frac{1}{p-1}},
\quad
(u^*)''(x)=\frac{1}{p-1}\abs{x}^{\frac{2-p}{p-1}} \quad \text{for } x \ne 0,
\]
and it follows that
\begin{equation*}\label{eq:cancellation_general}
-\frac{\rm{d}}{{\rm d}x}\Big(\abs{(u^*)'}^{p-2}(u^*)'\Big)=-(p-1)\abs{(u^*)'(x)}^{p-2}(u^*)''(x) = -1.
\end{equation*}
\end{example}

Although the exact solution $u^*$ in the above example satisfies the PDE a.e., utilizing the standard PINN loss formulation remains problematic for both $p>2$ and $1<p<2$. Specifically, for $p>2$, the singularity of $(u^*)''$ at $x=0$ induces instability in the neural network's approximation capability, making it difficult for the model to learn the solution profile accurately. On the other hand, for $1<p<2$, the singularity of $|(u^*)'|^{p-2}$ at $x=0$ triggers numerical instability during the discrete loss computation. The following theorem provides a \emph{stiffness/trade-off} characterization for $p>2$, which analytically demonstrates the potential training instability inherent in Example \ref{setup:counter_example}. We shall denote the PDE residual of \eqref{eq:1d_pde_general} by
\[
R(u)(x):=(p-1)\abs{u'(x)}^{p-2}u''(x)-1.
\]
\begin{theorem}
\label{thm:counter}
    Let $\Omega = (-1,1)$, $2<p<\infty$, and consider the problem \eqref{eq:1d_pde_general}. Suppose that there exists a sequence of even functions $\{v_n\}_{n \in \mathbb{N}}\subset C^2(\overline{\Omega})$ satisfying $v_n(0)=0$ for all $n \in \mathbb{N}$ and 
$\big\|R(v_n)\big\|_{L^2(-1,1)}\to 0$. Then, $\|v_n''\|_{L^\infty(-1,1)}\to\infty$.
\end{theorem}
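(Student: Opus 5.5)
We argue by contradiction: assume $\|v_n''\|_{L^\infty(-1,1)}$ does not diverge. Passing to a subsequence, we may then take $\|v_n''\|_{L^\infty}\le M$ for every $n$. The idea is that near $x=0$ the derivative $v_n'$ is forced to be small, which makes the factor $|v_n'|^{p-2}$ small when $p>2$, so the residual there is close to $-1$ and cannot vanish in $L^2$.

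\textbf{Step 1 (smallness of $v_n'$ near the origin).} Each $v_n$ is even and $C^2$, so $v_n'$ is odd and $v_n'(0)=0$. By the mean value theorem,
\begin{equation*}
|v_n'(x)|\le M|x| \qquad \text{for all } x\in(-1,1).
\end{equation*}
Since $p>2$, this gives
\begin{equation*}
(p-1)\,|v_n'(x)|^{p-2}\,|v_n''(x)| \le (p-1)M^{p-1}|x|^{p-2}.
\end{equation*}

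\textbf{Step 2 (lower bound on the residual).} Choose $\delta\in(0,1)$ so small that $(p-1)M^{p-1}\delta^{p-2}\le \tfrac12$. This is possible because $p-2>0$, and $\delta$ depends only on $M$ and $p$, not on $n$. For $|x|<\delta$ we then have
\begin{equation*}
|R(v_n)(x)| \ge 1-(p-1)|v_n'(x)|^{p-2}|v_n''(x)| \ge \tfrac12 .
\end{equation*}
Squaring and integrating over $(-\delta,\delta)$ yields
\begin{equation*}
\|R(v_n)\|_{L^2(-1,1)}^2 \ge \tfrac14\cdot 2\delta = \tfrac{\delta}{2} \qquad \text{for all } n.
\end{equation*}
This contradicts $\|R(v_n)\|_{L^2}\to0$, so $\|v_n''\|_{L^\infty}\to\infty$.

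\textbf{Remarks.}
\begin{itemize}
\item The argument uses only evenness, through $v_n'(0)=0$. The hypothesis $v_n(0)=0$ is not needed.
\item The essential point is that $\delta$ must be uniform in $n$. This is exactly what the bounded-subsequence assumption provides, and it is why the contradiction is framed along a subsequence.
\item The boundary values play no role, since the argument is entirely local near $x=0$.
\end{itemize}

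One could also derive a quantitative bound, $\|v_n''\|_\infty\gtrsim \epsilon_n^{-(p-2)/(p-1)}$-type growth where $\epsilon_n$ is the residual norm, by optimizing $\delta$ in terms of $M$. Only the qualitative statement is required here.
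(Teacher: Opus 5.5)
Your proof is correct and follows essentially the same route as the paper. The paper first proves the quantitative bound $\|R(v)\|_{L^2(-1,1)}\ge C_p(1+M)^{-\frac{p-1}{2(p-2)}}$ using the same mean-value estimate $|v'(x)|\le M|x|$ and a $\delta$, depending only on $M$ and $p$, that forces $|R(v)|\ge\frac12$ near the origin, and then reads off the theorem; you use that same estimate in a contradiction argument along a bounded subsequence.
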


The conclusion of Theorem~\ref{thm:counter} is that, in order to make the strong residual small in the $L^2$ norm, a smooth model approximation $v$ might develop increasingly large curvature near the singularity. Indeed, it captures the core difficulty of training PINNs in this regime: the optimizer is forced to produce sharper curvature to minimize the loss, leading to severe stiffness. The proof follows directly from the proposition below.

\begin{proposition}
Let $\Omega = (-1,1)$, $2<p<\infty$, and consider the problem \eqref{eq:1d_pde_general}. Let $v\in C^2(\overline{\Omega})$ be an even function satisfying $v(0)=0$. Then there exists a constant $C_p>0$, depending only on $p$, such that
\begin{equation}\label{eq:tradeoff_general_p}
\big\|R(v)\big\|_{L^2(-1,1)} \, \ge \, C_p\, (1+M)^{-\frac{p-1}{2(p-2)}},
\end{equation}
where $M:=\|v''\|_{L^\infty(-1,1)}$.
\end{proposition}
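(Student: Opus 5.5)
The plan is to exploit the fact that an even $C^2$ function automatically has $v'(0)=0$. Near the origin, the factor $\abs{v'}^{p-2}$ (which vanishes there since $p>2$) then forces the nonlinear term in $R(v)$ to be small. On a neighbourhood of $0$ whose size is controlled by $M$ we therefore get $R(v)\approx -1$, and integrating $\abs{R(v)}^2$ over that neighbourhood gives the lower bound. The hypothesis $v(0)=0$ is not needed for this argument; only evenness, via $v'(0)=0$, is used.

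\textbf{Step 1: local bound on $v'$.} Since $v$ is even and $C^1$, $v'$ is odd, so $v'(0)=0$. By the mean value theorem, $\abs{v'(x)}\le M\abs{x}$ for all $x\in(-1,1)$.

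\textbf{Step 2: bound on the nonlinear term.} Using $p-2>0$,
\[
\bigl|(p-1)\abs{v'(x)}^{p-2}v''(x)\bigr| \le (p-1)M^{p-1}\abs{x}^{p-2}.
\]
Define
\[
\delta:=\min\Bigl\{1,\ \bigl(2(p-1)(1+M)^{p-1}\bigr)^{-\frac{1}{p-2}}\Bigr\}.
\]
Replacing $M$ by $1+M$ handles the case $M=0$ and keeps the constant uniform. For $\abs{x}\le\delta$ we then have $(p-1)M^{p-1}\abs{x}^{p-2}\le \tfrac12$, and consequently $R(v)(x)\le -\tfrac12$, that is, $\abs{R(v)(x)}\ge \tfrac12$.

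\textbf{Step 3: integrate.} From Step 2,
\[
\|R(v)\|_{L^2(-1,1)}^2\ge \int_{-\delta}^{\delta}\tfrac14\,\mathrm{d}x=\tfrac{\delta}{2}.
\]
There are two cases. If $\delta=1$, the right-hand side is $\tfrac12$, which dominates $c\,(1+M)^{-\frac{p-1}{p-2}}$ because $1+M\ge1$. Otherwise $\delta=(2(p-1))^{-\frac1{p-2}}(1+M)^{-\frac{p-1}{p-2}}$. Taking square roots in both cases gives \eqref{eq:tradeoff_general_p} with
\[
C_p=\min\Bigl\{2^{-1/2},\ \bigl(\tfrac12 (2(p-1))^{-\frac1{p-2}}\bigr)^{1/2}\Bigr\}.
\]

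The only delicate point is choosing $\delta$ so that it stays in $(0,1]$ and the constant depends only on $p$, including the degenerate case $M=0$. Using $1+M$ in place of $M$ in the definition of $\delta$ takes care of this. Theorem~\ref{thm:counter} then follows immediately: if $\|R(v_n)\|_{L^2(-1,1)}\to0$, the inequality forces $(1+M_n)^{-\frac{p-1}{2(p-2)}}\to0$, so $M_n=\|v_n''\|_{L^\infty(-1,1)}\to\infty$.
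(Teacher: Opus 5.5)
Your proof is correct and follows the paper's argument essentially step for step: you get $v'(0)=0$ from evenness, use the mean value bound $\abs{v'(x)}\le M\abs{x}$, make the same choice of $\delta$ with $1+M$ in place of $M$, and integrate $\abs{R(v)}^2\ge\frac14$ near the origin. The only differences are cosmetic: you integrate over $(-\delta,\delta)$ instead of $(0,\delta)$, which gains a factor of $2$, and your $\min\{1,\cdot\}$ is never active, because $2(p-1)(1+M)^{p-1}>1$ already forces $\delta<1$, as the paper notes.
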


\begin{proof}
Since $v$ is even and differentiable, $v'(0)=0$. For any $x\in(0,1)$, the mean value theorem implies $|v'(x)| \le Mx$, given that $|v''(x)|\le M$.
Consequently, the flux derivative term can be bounded as
\[
(p-1)\abs{v'(x)}^{p-2}|v''(x)|
\le (p-1)(Mx)^{p-2}M
= (p-1)M^{p-1}x^{p-2}.
\]
Let $\delta:=\left(\frac{1}{2(p-1)(1+M)^{p-1}}\right)^{1/(p-2)}$. Since $p>2$, we have $0<\delta<1$. Then, for all $x\in(0,\delta)$, we have
$(p-1)\abs{v'(x)}^{p-2}|v''(x)|\le \frac12$. This implies that the residual is bounded away from zero in this interval:
\[
|R(v)(x)| = \left| (p-1)\abs{v'(x)}^{p-2}v''(x) - 1 \right| \ge 1 - \frac12 = \frac12.
\]
Integrating the squared residual over $(0, \delta)$ yields
\[
\big\|R(v)\big\|_{L^2(-1,1)}^2 \ge \int_0^\delta \left(\frac12\right)^2 \dx = \frac{\delta}{4}.
\]
Taking the square root proves the desired inequality \eqref{eq:tradeoff_general_p}.
\end{proof}
These observations indicate that the conventional $L^2$-based framework for training PINNs may render the problem intrinsically ill-conditioned and severely hinder the training process. This, in turn, underscores the need for a general and robust formulation that remains stable across the full range of exponents $1<p<\infty$.

Beyond the interior residual, the standard $L^2$ boundary loss also fails to 
sufficiently enforce the required regularity of the solution. More precisely, 
rigorous analyses in \cite{shin2023error, zeinhofer2025unified} show that 
enforcing boundary conditions via a conventional $L^2$ penalty strictly 
restricts the convergence of the interior error to the $H^{\frac{1}{2}}$ norm. For instance, for the Poisson equation, the solution $u$ satisfies the estimate
\[
\|u\|_{H^s(\Omega)} \le C \left( \|\Delta u\|_{L^2(\Omega)} + \|u\|_{L^2(\pa\Omega)} \right)
\]
if and only if $s \le \frac{1}{2}$. To address this limitation in the context of linear elliptic PDEs, \cite{Gazoulis2023, trpinn} suggest utilizing the $H^\frac{1}{2}$ norm for the boundary formulation, which is natural for an $H^1$ solution by the trace theorem. Specifically, for a linear elliptic operator $L$, the solution $u$ satisfies 
\[
\|u\|_{H^1(\Omega)} \le C \left( \|Lu\|_{L^2(\Omega)} + \|u\|_{H^\frac{1}{2}(\pa\Omega)} \right).
\]
Consequently, for the $p$-Laplace problem under consideration, it is natural to employ the norm of $\pb$ for the boundary condition, which constitutes the correct trace space for $W^{1,p}(\Omega)$.

\subsection{Proposed framework}
\label{sec:cons2}

To overcome the issues discussed in the previous subsection, we shall now propose a novel variational approach based on the dual norm and the fractional Sobolev norm. Before proceeding further, for $f \in W^{-1,p'}(\Omega)$ and $g \in \pb$, we define the PDE and boundary residuals for $v \in W^{1,p}(\Omega)$ as
\begin{equation}\label{PDE_res}
    R(v):=-{\rm{div}}\,(|\nabla v|^{p-2}\nabla v)-f\in W^{-1,p'}(\Omega)
\end{equation}
and
\begin{equation}\label{bdry_res}
    B(v):=\gamma v-g\in \pb,
\end{equation}
where $\gamma$ is the trace operator defined in Theorem \ref{thm:trace}. For the linear functional $R(v)$ on $W^{1,p}_0(\Omega)$, we define its duality pairing as
\begin{equation}\label{dual_pair}
    \langle R(v), \varphi \rangle_{\rm{res}} := \int_\Omega |\nabla v|^{p-2}\nabla v \cdot\nabla \varphi \dx - \langle f,\varphi\rangle.
\end{equation}
Equipped with these settings, we propose to consider the following minimization problem over the admissible space $\mathcal{A} = W^{1,p}(\Omega)$:
\begin{align}
\label{p:ideal}
    \inf_{v \in \mathcal{A}} \mathcal{L}(v),
\end{align}
where the loss functional is defined using the dual norm and the fractional Sobolev norm:
\begin{align}
\label{def:loss}
    \mathcal{L}(v) := \delta_r\big\|R(v)\big\|_{W^{-1,p'}(\Omega)}^\alpha + \delta_b\big\|B(v)\big\|_{\pb},
\end{align}
where $\alpha = \max\{1,\frac{1}{p-1}\}$, and $\delta_r, \delta_b$ are weight parameters. The exponent $\alpha$ is motivated by the homogeneity of the $p$-Laplace operator. Since $\xi\mapsto|\xi|^{p-2}\xi$ is $(p-1)$-homogeneous, an error of size $t$ in the solution induces a residual of size $t^{p-1}$. For $1<p<2$, the map $t\mapsto t^{p-1}$ is concave with an infinite slope at the origin, so a loss built directly on the residual has a gradient of order $|t|^{p-2}$, which blows up as $t\to 0$, making the optimizer prone to gradient explosion precisely where training should settle down. Raising the residual to the power $\alpha$ turns this into $|t|^{\alpha(p-1)}$, and requiring $\alpha(p-1)\ge 1$ keeps the gradient bounded, which yields $\alpha\ge\frac{1}{p-1}$. The choice $\alpha=\max\{1,\frac{1}{p-1}\}$ thus enforces this correction for $1<p<2$ while retaining $\alpha=1$ for $p\ge 2$, where no rescaling is needed; as a by-product, both loss components become of the same order in the error near convergence. Note that this functional setting is natural and mathematically sound, as it directly reflects the well-posedness framework of the problem, in which $f \in W^{-1,p'}(\Omega)$ and the boundary values are characterized by the trace space $\pb$ for any $v \in W^{1,p}(\Omega)$.

This variational treatment of the PDE residual, known as the variational PINNs (VPINNs) framework, originated in \cite{VPINN}, where neural networks serve as the trial space and finite-dimensional spaces as the test space, 
improving both accuracy and computational efficiency for certain low-regularity problems. The framework was subsequently extended in 
\cite{hpvpinn} to incorporate $hp$-refinement through domain decomposition of the test space, and rigorous error estimates for this approach were established in \cite{shin2023error}. In a further development, the authors of \cite{RVPINN} proposed robust VPINNs, in which the loss functional is constructed from the Riesz representative of the weak residual. In a 
related direction, \cite{DFRPINN} adopted the $H^{-1}$ norm as a loss functional and developed the deep Fourier residual method for its 
efficient computation.

We now formally establish the consistency of our approach for the $p$-Laplace problem in the following proposition.

\begin{proposition}
    Let $u^* \in W^{1,p}(\Omega)$ be the unique weak solution of \eqref{p_lap}. For any $v \in W^{1,p}(\Omega)$, the weak loss satisfies $\mathcal{L}(v) = 0$ if and only if $v = u^*$.
\end{proposition}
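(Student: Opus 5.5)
The plan is to prove the two directions separately. Both rest on the fact that $\mathcal{L}$ is a sum of two nonnegative terms, since $\delta_r,\delta_b>0$ and $\alpha\ge 1$, and on the uniqueness in Theorem~\ref{thm:exist}.

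For the direction ``$v=u^*\Rightarrow\mathcal{L}(v)=0$'', I will read off each residual from Definition~\ref{def:weaksol}. Testing \eqref{weaksol} against an arbitrary $\varphi\in W^{1,p}_0(\Omega)$ gives $\langle R(u^*),\varphi\rangle_{\rm res}=0$ by \eqref{dual_pair}. Taking the supremum in \eqref{eq:dualnorm} then yields $\|R(u^*)\|_{W^{-1,p'}(\Omega)}=0$. The trace condition in Definition~\ref{def:weaksol} gives $B(u^*)=\gamma u^*-g=0$ in $\pb$, so its norm vanishes as well. Hence $\mathcal{L}(u^*)=0$.

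For the converse, I will assume $\mathcal{L}(v)=0$. Since both terms are nonnegative and the weights are positive, each term must vanish separately. The condition $\|B(v)\|_{\pb}=0$ gives $\gamma v=g$, so $v$ has the prescribed trace. Vanishing of the dual norm means that the linear functional $\varphi\mapsto\langle R(v),\varphi\rangle_{\rm res}$ is identically zero on $W^{1,p}_0(\Omega)$, because $|\langle R(v),\varphi\rangle|\le \|R(v)\|_{W^{-1,p'}(\Omega)}\|\varphi\|_{W^{1,p}(\Omega)}$. This is exactly \eqref{weaksol} with $v$ in place of $u$, so $v$ is a weak solution of \eqref{p_lap}. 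The uniqueness part of Theorem~\ref{thm:exist} then gives $v=u^*$.

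I expect no real obstacle here. The one point needing a remark is that $R(v)$ genuinely lies in $W^{-1,p'}(\Omega)$ for every $v\in W^{1,p}(\Omega)$, so that the loss is finite and the supremum argument is legitimate. This follows from H\"older's inequality: $|\nabla v|^{p-2}\nabla v\in L^{p'}(\Omega)$, since $(p-1)p'=p$. As an alternative to citing uniqueness, one can test the difference of the two weak formulations with $v-u^*\in W^{1,p}_0(\Omega)$. Strict monotonicity of $\xi\mapsto|\xi|^{p-2}\xi$ then forces $\nabla(v-u^*)=0$, and Lemma~\ref{lem:poi} gives $v=u^*$.
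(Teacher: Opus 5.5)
Your proposal is correct and follows essentially the same route as the paper: split $\mathcal{L}(v)=0$ into the vanishing of both nonnegative terms, identify the vanishing dual norm with the weak formulation \eqref{weaksol} and the vanishing boundary norm with $\gamma v=g$, then invoke the uniqueness in Theorem~\ref{thm:exist}. Your additional remarks that $R(v)\in W^{-1,p'}(\Omega)$ and that strict monotonicity together with Lemma~\ref{lem:poi} can replace the uniqueness citation are valid refinements the paper leaves implicit.
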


\begin{proof}
    Since norms are non-negative, $\mathcal{L}(v) = 0$ holds if and only if $\big\|R(v)\big\|_{W^{-1,p'}(\Omega)} = 0$ and $\big\|B(v)\big\|_{\pb} = 0$. On one hand, $\big\|R(v)\big\|_{W^{-1,p'}(\Omega)} = 0$ is equivalent to $\langle R(v), \varphi \rangle_{\rm{res}} = 0$ for all $\varphi \in W^{1,p}_0(\Omega)$, which recovers the exact weak formulation:
    \[
        \int_\Omega |\nabla v|^{p-2}\nabla v \cdot \nabla \varphi  \dx = \langle f, \varphi \rangle, \quad \forall \varphi \in W^{1,p}_0(\Omega).
    \]
    On the other hand, $\big\|B(v)\big\|_{\pb} = 0$ guarantees that $v = g$ on $\partial\Omega$ in the trace sense. Consequently, $\mathcal{L}(v) = 0$ if and only if $v$ is a weak solution of \eqref{p_lap}. By the uniqueness of the weak solution $u^*$, we immediately conclude that $v = u^*$.
\end{proof}

\begin{remark}
Furthermore, our analysis fundamentally aligns with the VPINN framework. This connection is mathematically justified by the fact that the vanishing of the corresponding dual norm is equivalent to the vanishing of the weak residual for all test functions; explicitly, we have
\begin{equation} \label{eqiv_vpinn}
\big\|R(v)\big\|_{W^{-1,p'}(\Omega)}=0\quad \text{if and only if} \quad \langle R(v), \varphi \rangle_{\rm{res}} = 0, \quad \forall \varphi \in W^{1,p}_0(\Omega) .
\end{equation}
We will use this observation later in the experimental section to compute the dual norm in our proposed method. See also Remark~\ref{VPINN compute} for the precise definition of our empirical residual loss.
\end{remark}

\section{Error estimates for approximate solutions}\label{sec:err_est}

In this section, we derive {\textit{a priori}} and {\textit{a posteriori}} error estimates (Theorem \ref{thm:posteriori}, Theorem \ref{thm:priori}, Corollary \ref{conv rate}) for the PINN solution solving the $p$-Laplace problem \eqref{p_lap}. To establish a consistent framework for the remainder of this section, we assume that $\Omega \subset \mathbb{R}^d$, $d \in \mathbb{N}$ is a bounded Lipschitz domain, and let the forcing term $f$ and the boundary condition $g$ satisfy
\begin{equation} \label{basic setup}
f\in W^{-1,p'}(\Omega) \quad \text{and} \quad g \in W^{1-\frac{1}{p},p}(\pa \Omega) \quad \text{with } 1<p<\infty.
\end{equation}

We begin with the following lemma, which summarizes fundamental properties of the $p$-Laplacian operator, which can be found in several standard references, including \cite{pL_1, pL_2}.
\begin{lemma}\label{alg_lem}
    For any $1<p<\infty$ and $\xi$, $\eta\in\R^d$ with $\xi\neq\eta$, there exists a constant $C_p$, depending continuously on $p$, such that 
    \begin{align*}
        \left||\xi|^{p-2}\xi-|\eta|^{p-2}\eta\right|&\le C_p \left(|\xi|+|\eta|\right)^{p-2}|\xi-\eta|,\\
        |\xi-\eta|^2\left(|\xi|+|\eta|\right)^{p-2}&\le C_p \left(|\xi|^{p-2}\xi-|\eta|^{p-2}\eta\right)\cdot\left(\xi-\eta\right).
    \end{align*}
    In particular, if $p>2$ we have that
    \[
    |\xi-\eta|^p\le C_p \left(|\xi|^{p-2}\xi-|\eta|^{p-2}\eta\right) \cdot \left(\xi-\eta\right),
    \]
    while, if $1<p<2$, it follows that
    \[
    \left||\xi|^{p-2}\xi-|\eta|^{p-2}\eta\right|\le C_p|\xi-\eta|^{p-1}.
    \]
\end{lemma}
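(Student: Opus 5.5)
The plan is to reduce everything to a one-dimensional integral along the segment joining $\eta$ to $\xi$. Set $F(z):=|z|^{p-2}z$, $z_t:=\eta+t(\xi-\eta)$ for $t\in[0,1]$, and $h:=\xi-\eta\neq 0$. Since $\xi\ne\eta$, the segment meets the origin in at most one point. Off the origin $F$ is smooth, with
\[
DF(z)=|z|^{p-2}\Bigl(I+(p-2)\tfrac{z\otimes z}{|z|^2}\Bigr),
\]
and the eigenvalues of this matrix lie between $\min\{1,p-1\}|z|^{p-2}$ and $\max\{1,p-1\}|z|^{p-2}$. As $|z|^{p-2}$ is integrable along the line even when $1<p<2$, we have
\[
F(\xi)-F(\eta)=\int_0^1 DF(z_t)\,h\,{\rm d}t .
\]
The first two inequalities then follow from the key scalar comparison
\begin{equation*}
c_p(|\xi|+|\eta|)^{p-2}\le \int_0^1|z_t|^{p-2}\,{\rm d}t\le C_p(|\xi|+|\eta|)^{p-2}.
\end{equation*}
Indeed, the upper bound on $DF$ gives $|F(\xi)-F(\eta)|\le\max\{1,p-1\}\,|h|\int_0^1|z_t|^{p-2}\,{\rm d}t$. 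The lower bound on $DF$ gives $(F(\xi)-F(\eta))\cdot h\ge \min\{1,p-1\}\,|h|^2\int_0^1|z_t|^{p-2}\,{\rm d}t$.

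The comparison itself is where the work lies, and I treat the two regimes separately. Throughout, $|z_t|\le |\xi|+|\eta|$.

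For $p\ge2$, the upper bound is immediate. For the lower bound, I use that $|z_t|\ge\bigl||\eta|-t|h|\bigr|$ (or the analogous bound from the larger endpoint). On a subinterval of $t$ of length at least $1/4$, one has $|z_t|\ge\frac14\max\{|\xi|,|\eta|\}\ge\frac18(|\xi|+|\eta|)$. To see this, assume w.l.o.g. $|\xi|\ge|\eta|$; then $t\in[3/4,1]$ works when $|h|\le 2|\xi|$, and a triangle-inequality argument handles the remaining case.

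For $1<p<2$, the lower bound is immediate since $|z_t|^{p-2}\ge(|\xi|+|\eta|)^{p-2}$. The upper bound is the main obstacle, because $|z_t|^{p-2}$ may blow up where the segment passes near the origin. Here I write $|z_t|\ge \operatorname{dist}(0,\text{line})$ and $|z_t|\ge |h|\,|t-t_0|$, where $t_0$ is the parameter of the closest point. This gives
\[
\int_0^1|z_t|^{p-2}\,{\rm d}t\le |h|^{p-2}\int_0^1|t-t_0|^{p-2}\,{\rm d}t\le \tfrac{2}{p-1}|h|^{p-2}.
\]
If $|h|\ge\frac12(|\xi|+|\eta|)$, this is bounded by $C_p(|\xi|+|\eta|)^{p-2}$. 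Otherwise $|\xi|$ and $|\eta|$ are comparable, and every point of the segment has $|z_t|\ge\frac14(|\xi|+|\eta|)$, so the bound holds trivially. The constants are explicit continuous functions of $p$, which gives the required continuous dependence.

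The two "in particular" statements follow directly. For $p>2$, use $(|\xi|+|\eta|)^{p-2}\ge|\xi-\eta|^{p-2}$ in the second inequality. For $1<p<2$, use $(|\xi|+|\eta|)^{p-2}\le|\xi-\eta|^{p-2}$, which holds because the exponent is negative and $|\xi-\eta|\le|\xi|+|\eta|$, in the first inequality.
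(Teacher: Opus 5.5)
Your proof is correct. The paper does not prove this lemma itself; it only cites standard references. Your argument is essentially the classical proof found in that literature. You write $F(\xi)-F(\eta)=\int_0^1 DF(z_t)(\xi-\eta)\,{\rm d}t$ and use that the eigenvalues of $DF(z)$ lie in $[\min\{1,p-1\}|z|^{p-2},\max\{1,p-1\}|z|^{p-2}]$. Everything then reduces to the comparison $\int_0^1|\eta+t(\xi-\eta)|^{p-2}\,{\rm d}t\simeq(|\xi|+|\eta|)^{p-2}$. Compared with the paper's bare citation, your route is self-contained and yields explicit constants.

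Three small points are worth tightening.
\begin{enumerate}
\item In the $p\ge2$ lower bound, the ``remaining case'' $|h|>2|\xi|$ is vacuous, since $|h|\le|\xi|+|\eta|\le2|\xi|$ when $|\xi|\ge|\eta|$.
\item In the $1<p<2$ upper bound, comparability of $|\xi|$ and $|\eta|$ alone does not keep the segment away from the origin; think of $\eta=-\xi$. The correct justification is to average $|z_t|\ge|\eta|-t|h|$ and $|z_t|\ge|\xi|-(1-t)|h|$. This gives $|z_t|\ge\frac12(|\xi|+|\eta|-|h|)>\frac14(|\xi|+|\eta|)$ whenever $|h|<\frac12(|\xi|+|\eta|)$.
\item Your regime-wise constants are not actually continuous at $p=2$. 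For example, your lower comparison constant is $1$ for $p<2$ but $\frac14$ at $p=2$. Since the lemma only requires upper bounds of the form $\le C_p\,\cdot$, you can replace $C_p$ by a continuous majorant. That is all the paper needs, because it only uses uniform boundedness of $C_p$ for $p$ in compact subsets of $(1,\infty)$.
\end{enumerate}
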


We will also use the following auxiliary result, which addresses the singular case of $1<p<2$ for the $p$-Laplacian in our subsequent error analysis.
\begin{lemma}\label{aux_ineq_1}
    Let $a\ge 0$, $b\ge 0$ with $a,b\in L^p(\Omega)$ and $1<p<2$. Then, there holds
\[
\int_\Omega a^{p-2}b^2\dx
\ge
\frac{\|b\|_{L^p(\Omega)}^{2}}{\|a\|_{L^p(\Omega)}^{2-p}}.
\]
\end{lemma}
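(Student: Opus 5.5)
The plan is a reverse H\"older argument: write $b^p$ as a product of a power of $a^{p-2}b^2$ and a power of $a^p$, then apply H\"older's inequality with exponents chosen so the two factors reproduce exactly the two integrals in the statement. Since $1<p<2$, we have $0<p/2<1$. The factorization is
\[
b^p = \left(a^{p-2}b^2\right)^{\frac p2}\, a^{\frac{(2-p)p}{2}},
\]
which one checks by comparing exponents. The power of $a$ is $(p-2)\frac p2 + \frac{(2-p)p}{2} = 0$, and the power of $b$ is $p$. I would first treat the case where $a>0$ a.e.\ on $\{b>0\}$, so that the factorization holds pointwise a.e.

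Next I apply H\"older's inequality with the conjugate exponents $q=\frac{2}{p}$ and $q'=\frac{2}{2-p}$. This gives
\[
\int_\Omega b^p \dx \le \left(\int_\Omega a^{p-2}b^2\dx\right)^{\frac p2}\left(\int_\Omega a^{p}\dx\right)^{\frac{2-p}{2}},
\]
because $\left(a^{\frac{(2-p)p}{2}}\right)^{\frac{2}{2-p}} = a^p$. The left side is $\|b\|_{L^p}^p$ and the last factor is $\|a\|_{L^p}^{\frac{(2-p)p}{2}}$. Raising both sides to the power $2/p$ and rearranging yields
\[
\|b\|_{L^p}^2 \le \int_\Omega a^{p-2}b^2\dx \cdot \|a\|_{L^p}^{2-p},
\]
which is the claim, provided $\|a\|_{L^p}>0$.

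The only delicate step is the set where $a=0$. There the integrand $a^{p-2}b^2$ should be read as $+\infty$ when $b>0$ (and as $0$ when $b=0$), since $p-2<0$.
\begin{itemize}
\item If $\{a=0,\ b>0\}$ has positive measure, the left-hand side of the lemma is $+\infty$ and the inequality is trivial.
\item If $\|a\|_{L^p}=0$, then $a=0$ a.e., and the inequality holds under the convention $+\infty$ on the left (or trivially if also $b=0$).
\item Otherwise $b=0$ a.e.\ on $\{a=0\}$, so both integrals may be restricted to $\{a>0\}$ and the H\"older computation above applies directly.
\end{itemize}
Alternatively, replacing $a$ by $a+\epsilon$ and letting $\epsilon\to0$ via monotone convergence handles this cleanly. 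I do not expect any other obstacle.
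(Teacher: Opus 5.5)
Your proposal is correct and is essentially the paper's proof: the same factorization $b^p=(a^{p-2}b^2)^{p/2}a^{(2-p)p/2}$, H\"older's inequality with exponents $\frac{2}{p}$ and $\frac{2}{2-p}$, and then raising to the power $\frac{2}{p}$. Your extra case analysis on $\{a=0\}$ fills in a degenerate point that the paper passes over silently; it uses the convention $a^{p-2}b^2=+\infty$ there when $b>0$, and when $a=b=0$ a.e.\ the statement should be read in the multiplied form $\|b\|_{L^p(\Omega)}^2\le \|a\|_{L^p(\Omega)}^{2-p}\int_\Omega a^{p-2}b^2\dx$.
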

\begin{proof}  
We may write $b^p=\big(a^{p-2}b^2\big)^{\frac{p}{2}}\,a^{\frac{(2-p)p}{2}}$.
Integrating this identity over $\Omega$ and applying H\"older's inequality with exponents $\frac{2}{p}$ and $\frac{2}{2-p}$ yield
\[
\int_\Omega b^p\dx
=
\int_\Omega \big(a^{p-2}b^2\big)^{\frac{p}{2}}\,a^{\frac{(2-p)p}{2}}\dx
\le
\bigg(\int_\Omega a^{p-2}b^2\dx\bigg)^{\frac{p}{2}}
\bigg(\int_\Omega a^p\dx\bigg)^{\frac{2-p}{2}}.
\]
By taking both sides to the power $\frac{2}{p}$, we have
\[
\|b\|_{L^p(\Omega)}^2
\le
\bigg(\int_\Omega a^{p-2}b^2 \dx \bigg)\|a\|_{L^p(\Omega)}^{\,2-p},
\]
which is the desired inequality.
\end{proof}


Before establishing our main theoretical bounds, we introduce the neural network ansatz class $\mathcal{N}_{\Theta} \subset W^{1,p}(\Omega)$, consisting of feed-forward neural networks associated with a parameter space $\Theta$. Throughout the analysis, our primary interest lies in the regime where the training has sufficiently progressed. Since the training of PINNs explicitly minimizes both the PDE and boundary residuals, it is natural to expect that these quantities, together with the solution error, eventually fall below a moderate threshold in this regime. We therefore postulate the following conditions on the neural network 
$v = v_\theta \in \mathcal{N}_\Theta$.

\begin{assumption} \label{small assumption}
    The neural network $v=v_\theta \in \mathcal{N}_\Theta$ satisfies the following bounds:
    \[
    \big\|R(v)\big\|_{\pis}\le 1, \quad \big\|B(v)\big\|_{W^{1-\frac{1}{p},p}(\pa\Omega)}\le 1, \quad \|v-u^*\|_{\ps} \le 1,
    \]
    where $R(v)$ and $B(v)$ are given in \eqref{PDE_res} and \eqref{bdry_res}, respectively.
\end{assumption}

In order to derive the error estimates, we first show that the neural network approximation error is bounded above by the PDE and boundary residuals.

\begin{proposition} \label{prop:posteriori}
    Assume that \eqref{basic setup} holds, and let $u^* \in W^{1,p}(\Omega)$ be a weak solution of \eqref{p_lap}. Then, for the neural network $v=v_\theta\in\NN_{\Theta}$ satisfying Assumption \ref{small assumption}, we have the following:
    \begin{itemize}
        \item [(1)] For $p>2$, there exists a constant $C_p>0$, depending continuously on $p$, such that 
    \[
    \|v-u^*\|_{\ps} \le C_p \left(\big\|R(v)\big\|^{\frac{1}{p-1}}_{W^{-1,p'}(\Omega)} +\big\|B(v)\big\|_{\pb}^\frac{1}{p-1}\right).
    \]
        \item [(2)] For $1<p<2$, there exists a constant $C_p>0$, depending continuously on $p$, such that
    \[
    \|v-u^*\|_{\ps}
    \le C_p \left(\big\|R(v)\big\|_{W^{-1,p'}(\Omega)}+\big\|B(v)\big\|_{\pb}^{p-1}\right).
    \]
    \end{itemize} 
\end{proposition}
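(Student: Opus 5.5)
Set $e := v-u^*\in W^{1,p}(\Omega)$. Since $\gamma e = \gamma v - g = B(v)$, we split $e$ into a zero-trace part and a lifted boundary part. Using the lifting operator $Z$ of Theorem~\ref{thm:blift}, put $w := Z B(v)$ and $\phi := e - w \in W^{1,p}_0(\Omega)$, with $\|w\|_{\ps}\le C_p\|B(v)\|_{\pb}$. Subtracting the weak formulation \eqref{weaksol} for $u^*$ from \eqref{dual_pair} and testing with $\phi$ gives the basic identity
\[
\int_\Omega \big(|\nabla v|^{p-2}\nabla v-|\nabla u^*|^{p-2}\nabla u^*\big)\cdot\nabla e\dx
= \langle R(v),\phi\rangle_{\rm res} + \int_\Omega \big(|\nabla v|^{p-2}\nabla v-|\nabla u^*|^{p-2}\nabla u^*\big)\cdot\nabla w\dx .
\]
Denote the left side by $I$. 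The first term on the right is bounded by $\|R(v)\|_{\pis}\big(\|e\|_{\ps}+C_p\|B(v)\|_{\pb}\big)$. For the second term we use Lemma~\ref{alg_lem} and H\"older. Finally, the Poincar\'e inequality (Lemma~\ref{lem:poi}) applied to $\phi$, together with the bound on $w$, gives $\|e\|_{\ps}\le C_p(\|\nabla e\|_{L^p(\Omega)}+\|B(v)\|_{\pb})$, so it suffices to control $\|\nabla e\|_{L^p(\Omega)}$.

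\textbf{Case $p>2$.} Lemma~\ref{alg_lem} gives $I\ge c_p\|\nabla e\|_{L^p}^p$. For the boundary term we use the first inequality of Lemma~\ref{alg_lem} and H\"older with exponents $\frac{p}{p-2},p,p$, which yields the bound $C_p\,(\|\nabla v\|_{L^p}+\|\nabla u^*\|_{L^p})^{p-2}\|\nabla e\|_{L^p}\|\nabla w\|_{L^p}$. By Assumption~\ref{small assumption} we have $\|\nabla v\|_{L^p}\le 1+\|\nabla u^*\|_{L^p}$, and $\|\nabla u^*\|_{L^p}$ is bounded via Proposition~\ref{lp_est}; the resulting data dependence is absorbed into the constant. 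Young's inequality with exponents $p,p'$ then gives
\[
\|\nabla e\|^p\lesssim \|R\|^{p'}+\|B\|^{p'}+\|R\|\|B\|.
\]
Taking $p$-th roots and using $\|R\|,\|B\|\le 1$ together with $p'/p = 1/(p-1)$ yields the claim. The mixed term is handled by $\|R\|\|B\|\le \|R\|^{p'}+\|B\|^{p}$, and $\|B\|^{p}\le\|B\|^{p'}$ since $\|B\|\le 1$.

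\textbf{Case $1<p<2$.} This is the main obstacle, because the monotonicity is only weighted: $I\ge c_p\int(|\nabla v|+|\nabla u^*|)^{p-2}|\nabla e|^2$. We apply Lemma~\ref{aux_ineq_1} with $a=|\nabla v|+|\nabla u^*|$ and $b=|\nabla e|$. Since $\|a\|_{L^p}$ is bounded by a constant through Assumption~\ref{small assumption} and Proposition~\ref{lp_est}, this gives $I\ge c\|\nabla e\|_{L^p}^2$. For the boundary term we use the last inequality of Lemma~\ref{alg_lem} and H\"older, obtaining the bound $C_p\|\nabla e\|_{L^p}^{p-1}\|\nabla w\|_{L^p}$. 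Using $\|\nabla e\|\le C$ (Assumption~\ref{small assumption}), this is at most $C\|B\|$. The residual term is at most $\|R\|(\|\nabla e\|+\|B\|)$. Hence
\[
\|\nabla e\|^2\lesssim \|R\|\|\nabla e\|+\|R\|\|B\|+\|B\|,
\]
and Young's inequality gives $\|\nabla e\|\lesssim \|R\|+\|B\|^{1/2}$.

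That exponent $1/2$ is weaker than the claimed $p-1$ when $p>3/2$. To reach $p-1$, I would bound the boundary term differently, keeping the factor $\|\nabla e\|^{p-1}$ and applying Young's inequality with exponents $\frac{2}{p-1}$ and $\frac{2}{3-p}$. This gives $\|B\|^{2/(3-p)}$, which still does not equal $\|B\|^{p-1}$, so some care is needed. However, $2/(3-p)\ge p-1$ holds for all $p\in(1,2)$, and since $\|B\|\le1$ we have $\|B\|^{2/(3-p)}\le\|B\|^{p-1}$; likewise $\|B\|^{1/2}\le \|B\|^{p-1}$ holds for $p\le 3/2$. So the refined estimate is what I would rely on. Combining this with the reduction to $\|\nabla e\|_{L^p}$ from the first paragraph, and noting that $\|B\|\le\|B\|^{p-1}$, establishes (2).
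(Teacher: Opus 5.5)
Your decomposition differs from the paper's. The paper sets $\tilde v=v-ZB(v)$, whose trace is exactly $g$. It applies monotonicity to the zero-trace error $\tilde e=\tilde v-u^*$ to control it by $\|R(\tilde v)\|_{W^{-1,p'}(\Omega)}$, and then pays for the lift through the continuity estimate $\|R(\tilde v)-R(v)\|_{W^{-1,p'}(\Omega)}\le\||\nabla\tilde v|^{p-2}\nabla\tilde v-|\nabla v|^{p-2}\nabla v\|_{L^{p'}(\Omega)}$. For $1<p<2$ it also splits according to whether $\|\nabla\tilde e\|_{L^p(\Omega)}$ exceeds $\|\nabla u^*\|_{L^p(\Omega)}$, and the factor $\|B(v)\|^{p-1}$ comes from the $(p-1)$-H\"older continuity of $\xi\mapsto|\xi|^{p-2}\xi$ applied to $\nabla w$ alone. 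You instead apply monotonicity to the actual error $e=v-u^*$ and use $\phi=e-w$ only as a test function. The boundary then enters through the cross term $\int_\Omega(|\nabla v|^{p-2}\nabla v-|\nabla u^*|^{p-2}\nabla u^*)\cdot\nabla w\dx$, which you balance against coercivity by Young. This avoids perturbing the residual and the case split, and your $p>2$ argument is complete as written. For $1<p<2$ your route even yields a sharper boundary exponent than the paper's, namely $\|B\|^{1/(3-p)}$, because the H\"older factor falls on $\|\nabla e\|$ rather than on $\|\nabla w\|$. However, the final bookkeeping as you wrote it does not reach (2).

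The Young step with exponents $\frac{2}{p-1},\frac{2}{3-p}$ bounds $\|\nabla e\|_{L^p(\Omega)}^2$ by $\|B\|^{2/(3-p)}$, not $\|\nabla e\|_{L^p(\Omega)}$. Replacing it by $\|B\|^{p-1}$ via $\frac{2}{3-p}\ge p-1$, as you propose, gives only $\|\nabla e\|_{L^p(\Omega)}\lesssim\|B\|^{(p-1)/2}$ after taking square roots, which is weaker than (2). Your own first attempt correctly compared the post-square-root exponent $\frac12$ with $p-1$. The comparison you need is $\frac{1}{3-p}\ge p-1$, which holds for every $p$ because $(p-1)(3-p)=1-(p-2)^2\le 1$. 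Likewise, in the refined step you cannot absorb the mixed term via $\|R\|\|B\|\le\|B\|$ as before, since that reinstates $\|B\|^{1/2}$; use $\sqrt{\|R\|\|B\|}\le\frac12(\|R\|+\|B\|)$ instead. With these two corrections, and using $\|B\|\le 1$, you get $\|\nabla e\|_{L^p(\Omega)}\lesssim\|R\|+\|B\|^{1/(3-p)}\le\|R\|+\|B\|^{p-1}$. Together with your reduction $\|e\|_{W^{1,p}(\Omega)}\lesssim\|\nabla e\|_{L^p(\Omega)}+\|B\|$ and $\|B\|\le\|B\|^{p-1}$, this proves (2).
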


\begin{proof}
From Theorem~\ref{thm:blift},
for the given boundary residual $B(v)\in \pb$, we can find the extension $w:=ZB(v)\in W^{1,p}(\Omega)$ satisfying
\begin{equation} \label{posteriori 21}
    \|w\|_{W^{1,p}(\Omega)} \le C_p \big\|B(v)\big\|_{\pb}.
\end{equation}
We then set $\tilde{v}:=v-w$ so that $\tilde{v} \in W^{1,p}(\Omega)$ and $\gamma\tilde{v}=g$. We first measure the error $\tilde{e}:=\tilde{v}-u^*\in W^{1,p}_0(\Omega)$. By the definition of the duality pairing $\ang{\cdot,\cdot}_{\rm res}$ in \eqref{dual_pair} and Definition~\ref{def:weaksol}, we see that 
\begin{equation}\label{pf_ini}
\begin{aligned}
    {\langle R(\tilde{v}),\tilde{e}\rangle_{\rm{res}}}
    &=\int_{\Omega} |\nabla \tilde{v}|^{p-2} \nabla \tilde{v} \cdot \nabla \tilde{e}  \dx - {\langle f , \te \rangle} \\
    &=\int_{\Omega}\left(|\nabla\tilde{v}|^{p-2}\nabla\tilde{v}
    -|\nabla u^*|^{p-2}\nabla u^*\right)  
    \cdot(\nabla\te) \dx.
\end{aligned}
\end{equation}

(Case $p > 2$) By Lemma \ref{alg_lem}, we have, by the Poincar\'e inequality, that 
\[
\|\te\|^p_{\ps}\le C_p \,{\langle R(\tv),\te\rangle_{\rm{res}}},
\]
which together with the triangular inequality leads us to the estimate 
\begin{equation} \label{posteriori 11}
\|\te\|^{p-1}_{\ps}\le C_p \big\|R(\tv)\big\|_{W^{-1,p'}(\Omega)} \le C_p \big\|R(v)\big\|_{W^{-1,p'}(\Omega)} + C_p \big\|R(\tv) - R(v)\big\|_{W^{-1,p'}(\Omega)}.
\end{equation}
By applying Lemma \ref{alg_lem} and H\"older's inequality, we obtain
\begin{equation} \label{posteriori 12}
\begin{aligned}
    \big\|R(\tv)-R(v)\big\|_{W^{-1,p'}(\Omega)}
    &=\sup_{\varphi\in W^{1,p}_0(\Omega)\setminus\{0\}}\frac{\langle R(\tv)-R(v),\varphi\rangle_{\rm res}}{\|\varphi\|_{W^{1,p}(\Omega)}}\\
    &\le \big\||\nabla\tv|^{p-2}\nabla\tv-|\nabla v|^{p-2}\nabla v\big\|_{L^{p'}(\Omega)}\\
    &\le C_p \big\|\left(|\nabla\tv|+|\nabla v|\right)^{p-2}|\nabla\tv-\nabla v|\big\|_{L^{p'}(\Omega)}\\
    &\le C_p \big\| |\nabla\tv|+|\nabla v|\big\|^{p-2}_{L^p(\Omega)}\|\tv -v\|_{\ps}.
\end{aligned}
\end{equation}
Hence, \eqref{posteriori 11} combined with \eqref{posteriori 12} and \eqref{posteriori 21} yields
\begin{align*}
    \|\te\|^{p-1}_{\ps}
    &\le C_p\big\|R(v)\big\|_{W^{-1,p'}(\Omega)}+ C_p\big\||\nabla\tv|+|\nabla v|\big\|^{p-2}_{L^p(\Omega)}\big\|B(v)\big\|_{\pb}\\
    &\le C_p\big\|R(v)\big\|_{W^{-1,p'}(\Omega)}+ C_p\left(\|\nabla v\|^{p-2}_{L^p(\Omega)}+\|\nabla w\|^{p-2}_{L^p(\Omega)}\right)\big\|B(v)\big\|_{\pb}\\
    &\le C_p \big\|R(v)\big\|_{W^{-1,p'}(\Omega)}+ C_p\|\nabla v\|^{p-2}_{L^p(\Omega)}\big\|B(v)\big\|_{\pb}+ C_p\big\|B(v)\big\|_{\pb}^{p-1}.
\end{align*}
The second term on the right-hand side can be estimated as
\begin{align*}
\|\nabla v\|^{p-2}_{L^p(\Omega)}\big\|B(v)\big\|_{\pb} &= \|\nabla v - \nabla u^* + \nabla u^*\|_{L^p(\Omega)}^{p-2}\big\|B(v)\big\|_{\pb} \\
&\le C_p \left(\| \nabla v - \nabla u^*\|_{L^p(\Omega)}^{p-2} + \|\nabla u^*\|_{L^p(\Omega)}^{p-2}\right) \big\|B(v)\big\|_{\pb}\\
&\le C_p \| \nabla v - \nabla u^*\|_{L^p(\Omega)}^{p-2}\big\|B(v)\big\|_{\pb} + C_p\big\|B(v)\big\|_{\pb},
\end{align*}
where we have used Proposition \ref{lp_est}. Moreover, by Young's inequality, we have for sufficiently small $\epsilon>0$ that
\[
\|\nabla v - \nabla u^*\|_{L^p(\Omega)}^{p-2} \big\|B(v) \big\|_{\pb} \le \epsilon \|v- u^*\|_{W^{1,p}(\Omega)}^{p-1} + C_\epsilon \big\|B(v)\big\|_{\pb}^{p-1}.
\]
Consequently, applying Young's inequality with a sufficiently small $\epsilon > 0$, we obtain
\begin{align*}
    \|v-u^*\|_{\ps}^{p-1}
    &\le C_p \|\te\|_{\ps}^{p-1} + C_p\|w\|_{\ps}^{p-1} \\
    &\le C_p \big\|R(v)\big\|_{W^{-1,p'}(\Omega)}+\epsilon \|v- u^*\|_{W^{1,p}(\Omega)}^{p-1} \\
    &\hspace{4mm}+C_{\epsilon,p} \big\|B(v)\big\|_{\pb}^{p-1} + C_p \big\|B(v)\big\|_{\pb}.
\end{align*}
Under the assumption that $\big\|B(v)\big\|_{\pb} \le 1$, we arrive at the desired estimate.

(Case $1<p<2$) It follows from Lemma \ref{alg_lem} and \eqref{pf_ini} that
\[
\int_{\Omega}|\nabla\te|^2\big(|\nabla\tv|+|\nabla u^*|\big)^{p-2} \dx\le C_p \, \langle R(\tv),\te\rangle_{\rm{res}}.
\]
By Lemma \ref{aux_ineq_1} and the duality estimate, we see that
\begin{align*}
    \|\nabla\te\|_{L^p(\Omega)}
    &\le C_p\big\|R(\tv)\big\|_{W^{-1,p'}(\Omega)}\big\||\nabla\tv|+|\nabla u^*|\big\|^{2-p}_{L^p(\Omega)}\\
    &\le C_p\big\|R(\tv)\big\|_{W^{-1,p'}(\Omega)}
    \Big(\|\nabla\te\|^{2-p}_{L^p(\Omega)}+\|\nabla u^*\|^{2-p}_{L^p(\Omega)}\Big).
\end{align*}
If $\|\nabla\te\|_{L^p(\Omega)}\ge\|\nabla u^*\|_{L^p(\Omega)}$, we have 
\[
\|\nabla\te\|_{L^p(\Omega)}\le C_p\big\|R(\tv)\big\|^{\frac{1}{p-1}}_{W^{-1,p'}(\Omega)}.
\]
On the other hand, if $\|\nabla\te\|_{L^p(\Omega)}\le \|\nabla u^*\|_{L^p(\Omega)}$, we see from Proposition \ref{lp_est} that 
\[
\|\nabla \te\|_{L^p(\Omega)}\le C_p \big\|R(\tv)\big\|_{W^{-1,p'}(\Omega)}.
\]
Therefore, by the Poincar\'e inequality and triangular inequality, we obtain
\begin{align*}
\|\te\|_{\ps} &\le C_p \left(\big\|R(\tv)\big\|_{W^{-1,p'}(\Omega)}^{\frac{1}{p-1}}+\big\|R(\tv)\big\|_{W^{-1,p'}(\Omega)}\right) \\
&\le C_p \bigg( \big\|R(v)\big\|_{W^{-1,p'}(\Omega)}^\frac{1}{p-1} + \big\|R(\tv) - R(v)\big\|_{W^{-1,p'}(\Omega)}^\frac{1}{p-1}\\
&\hspace{4mm}+ \big\|R(v)\big\|_{W^{-1,p'}(\Omega)} + \big\|R(\tv) - R(v)\big\|_{W^{-1,p'}(\Omega)}\bigg).
\end{align*}
As before, note further that
\begin{equation*}
    \big\|R(\tv)-R(v)\big\|_{W^{-1,p'}(\Omega)}
    \le \big\||\nabla\tv|^{p-2}\nabla\tv-|\nabla v|^{p-2}\nabla v\big\|_{L^{p'}(\Omega)}.
\end{equation*}
Then using Lemma \ref{alg_lem} for $1<p<2$ and \eqref{posteriori 21}, we deduce that
\[
\begin{split}
\big\|R(\tv)-R(v)\big\|_{W^{-1,p'}(\Omega)} &\le C_p \big\||\nabla\tv-\nabla v|^{p-1}\big\|_{L^{p'}(\Omega)} \\ 
&= C_p\|\nabla\tv-\nabla v\|^{p-1}_{L^p(\Omega)} \le C_p \big\|B(v)\big\|^{p-1}_{\pb}, 
\end{split}
\]
which leads us to the estimate
\begin{align*}
\|\te\|_{\ps} &\le C_p \bigg(\big\|R(v)\big\|^{\frac{1}{p-1}}_{W^{-1,p'}(\Omega)}+\big\|R(v)\big\|_{W^{-1,p'}(\Omega)} +\big\|B(v)\big\|_{\pb}+\big\|B(v)\big\|^{p-1}_{\pb}\bigg).    
\end{align*}
Therefore, under the assumptions $\big\|R(v)\big\|_{W^{-1,p'}(\Omega)}\le 1$ and $\big\|B(v)\big\|_{\pb}\le 1$, we conclude that
\begin{align*}
    \|v-u^*\|_{\ps}
    &\le \|\te\|_{\ps}+\|w\|_{\ps}\\
    &\le C_p\bigg(\big\|R(v)\big\|^{\frac{1}{p-1}}_{W^{-1,p'}(\Omega)}+\big\|R(v)\big\|_{W^{-1,p'}(\Omega)} \\    &\hspace{4mm}+\big\|B(v)\big\|_{\pb}+\big\|B(v)\big\|^{p-1}_{\pb}\bigg) \\
    &\le C_p \left( \big\|R(v)\big\|_{W^{-1,p'}(\Omega)}+\big\|B(v)\big\|^{p-1}_{\pb}\right).
\end{align*}
\end{proof}

For the purpose of error estimates for PINNs solving the $p$-Laplace equation, let us recall the definition of our loss function \eqref{def:loss} with $\delta_r=\delta_b=1$ proposed in Section \ref{sec:cons2} as follows:  $\mathcal{L}:\Theta\rightarrow\mathbb{R}$:
\[
{\mathcal{L}(\theta)=\mathcal{L}(v_{\theta})=\big\|R(v_\theta)\big\|_{W^{-1,p'}(\Omega)}^\alpha +\big\|B(v_\theta)\big\|_{\pb},}
\]
where $\alpha= \max\{1, \frac{1}{p-1} \}$ and $v_{\theta} \in \NN_\Theta$. Our first main result is the {\textit{a posteriori}} error estimate, which is a direct consequence of Proposition \ref{prop:posteriori}. This implies that, as training progresses and the loss function decreases, the error converges to zero. The detailed statement is encapsulated in the following theorem.

\begin{theorem}[{\textit{A posteriori}} estimate] \label{thm:posteriori}
    Assume that \eqref{basic setup} holds, and let $u^* \in W^{1,p}(\Omega)$ be a weak solution of \eqref{p_lap}. Then, for the neural network $v_\theta \in\NN_{\Theta}$ satisfying Assumption \ref{small assumption}, we have the following:
    \begin{itemize}
        \item[(1)] For $p>2$, there exists a constant $C_p>0$, depending continuously on $p$, such that
        \[
        \|v_\theta-u^*\|_{\ps} \le C_p \mathcal{L}(\theta)^{\frac{1}{p-1}}.
        \]
        \item[(2)] For $1<p<2$, there exists a constant $C_p>0$, depending continuously on $p$, such that
        \[
        \|v_\theta-u^*\|_{\ps} \le C_p  \mathcal{L}(\theta)^{p-1}.
        \]
    \end{itemize}
\end{theorem}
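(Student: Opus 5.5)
The plan is to read the theorem off Proposition~\ref{prop:posteriori} directly, using only two facts. First, each summand of the loss is bounded by the whole loss. Second, $t\mapsto t^s$ is nondecreasing on $[0,\infty)$ for every $s>0$. Since $v_\theta$ satisfies Assumption~\ref{small assumption} and \eqref{basic setup} holds, Proposition~\ref{prop:posteriori} applies to $v=v_\theta$. With $\delta_r=\delta_b=1$ we have $\mathcal{L}(\theta)=\|R(v_\theta)\|_{W^{-1,p'}(\Omega)}^\alpha+\|B(v_\theta)\|_{\pb}$, where both terms are nonnegative. Hence
\[
\|R(v_\theta)\|_{W^{-1,p'}(\Omega)}^\alpha\le\mathcal{L}(\theta),\qquad \|B(v_\theta)\|_{\pb}\le\mathcal{L}(\theta).
\]

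For $p>2$ we have $\alpha=1$, so $\|R(v_\theta)\|_{W^{-1,p'}(\Omega)}\le\mathcal{L}(\theta)$. Raising both bounds to the power $\frac{1}{p-1}>0$ and inserting them into part (1) of Proposition~\ref{prop:posteriori} gives
\[
\|v_\theta-u^*\|_{\ps}\le C_p\Big(\mathcal{L}(\theta)^{\frac{1}{p-1}}+\mathcal{L}(\theta)^{\frac{1}{p-1}}\Big)=2C_p\,\mathcal{L}(\theta)^{\frac{1}{p-1}}.
\]
The constant $2C_p$ still depends continuously on $p$.

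For $1<p<2$ we have $\alpha=\frac{1}{p-1}$. The key rewriting is
\[
\|R(v_\theta)\|_{W^{-1,p'}(\Omega)}=\Big(\|R(v_\theta)\|_{W^{-1,p'}(\Omega)}^{\frac{1}{p-1}}\Big)^{p-1}\le \mathcal{L}(\theta)^{p-1}.
\]
Likewise $\|B(v_\theta)\|_{\pb}^{p-1}\le\mathcal{L}(\theta)^{p-1}$, by monotonicity of $t\mapsto t^{p-1}$. Substituting both into part (2) of Proposition~\ref{prop:posteriori} yields $\|v_\theta-u^*\|_{\ps}\le 2C_p\,\mathcal{L}(\theta)^{p-1}$.

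There is no genuine obstacle, since all the analytic work is already contained in Proposition~\ref{prop:posteriori}. The only point needing care is to match the exponent $\alpha$ to each regime. In particular, for $1<p<2$ the residual enters the loss with power $\frac{1}{p-1}$, so it is the $(p-1)$-th power of the loss, not the loss itself, that controls $\|R(v_\theta)\|_{W^{-1,p'}(\Omega)}$. That is why the rate $\mathcal{L}(\theta)^{p-1}$ appears.
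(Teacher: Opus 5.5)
Your proof is correct and matches the paper's: both read the theorem directly off Proposition~\ref{prop:posteriori}, with $\alpha$ matched to each regime. The only cosmetic difference is that the paper merges the two terms into a single power of the loss, implicitly using $a^s+b^s\le 2^{1-s}(a+b)^s$ for $s\in(0,1)$, whereas you bound each summand by $\mathcal{L}(\theta)$ separately and pick up a harmless factor $2$.
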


\begin{proof}
    (Case $p>2$) By Proposition \ref{prop:posteriori} and definition of the loss function $\mathcal{L}$, we have
    \[
    \|v-u^*\|_{\ps} \le C_p \left(\big\|R(v)\big\|_{W^{-1,p'}(\Omega)} +\big\|B(v)\big\|_{\pb}\right)^\frac{1}{p-1} = C_p \mathcal{L}(\theta)^\frac{1}{p-1}.
    \]

    (Case $1<p<2$) By Proposition \ref{prop:posteriori} and definition of the loss function $\mathcal{L}$, we have
    \[
    \|v-u^*\|_{\ps} \le C_p \left(\big\|R(v)\big\|_{W^{-1,p'}(\Omega)}^\frac{1}{p-1} +\big\|B(v)\big\|_{\pb}\right)^{p-1} = C_p \mathcal{L}(\theta)^{p-1}.    
    \]
\end{proof}

Next, for the second main result, we shall first prove the inequality in the opposite direction to Proposition \ref{prop:posteriori}, namely the part where the residual functional is bounded above by the error for the neural network approximation. 
\begin{proposition} \label{prop:priori}
    Assume that \eqref{basic setup} holds, and let $u^* \in W^{1,p}(\Omega)$ be a weak solution of \eqref{p_lap}. Then, for the neural network $v=v_\theta \in\NN_{\Theta}$ satisfying Assumption \ref{small assumption}, we have the following:
    \begin{itemize}
        \item[(1)] For $p > 2$, there exists a constant $C_p>0$, depending continuously on $p$, such that
    \begin{equation*}
        \big\|R(v)\big\|_{\pis} + \big\|B(v)\big\|_{\pb} \le C_p \|v-u^*\|_{\ps}.
    \end{equation*}
        \item[(2)] For $1<p<2$, there exists a constant $C_p>0$, depending continuously on $p$, such that
    \begin{equation*}
         \big\|R(v)\big\|_{\pis}^\frac{1}{p-1} + \big\|B(v)\big\|_{\pb} \le C_p \|v-u^*\|_{\ps}.
    \end{equation*}
    \end{itemize} 
\end{proposition}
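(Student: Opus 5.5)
The plan is to bound the two residuals separately, using that $u^*$ has zero residual. For the boundary term, since $\gamma u^* = g$, we have $B(v) = \gamma(v-u^*)$. Theorem~\ref{thm:trace} then gives $\|B(v)\|_{\pb} \le C_p\|v-u^*\|_{\ps}$ for every $1<p<\infty$. This settles the boundary part of both (1) and (2).

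For the interior residual, Definition~\ref{def:weaksol} shows that $\langle R(u^*),\varphi\rangle_{\rm res}=0$ for all $\varphi\in W^{1,p}_0(\Omega)$. Hence
\[
\langle R(v),\varphi\rangle_{\rm res} = \int_\Omega \big(|\nabla v|^{p-2}\nabla v - |\nabla u^*|^{p-2}\nabla u^*\big)\cdot\nabla\varphi \dx .
\]
By H\"older's inequality and \eqref{eq:dualnorm},
\[
\|R(v)\|_{\pis}\le \big\||\nabla v|^{p-2}\nabla v - |\nabla u^*|^{p-2}\nabla u^*\big\|_{L^{p'}(\Omega)}.
\]
From here the two cases differ.

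\emph{Case $p>2$.} The first inequality of Lemma~\ref{alg_lem} bounds the integrand pointwise by $C_p(|\nabla v|+|\nabla u^*|)^{p-2}|\nabla v-\nabla u^*|$. Apply H\"older with exponents $\frac{p-1}{p-2}$ and $p-1$ inside the $L^{p'}$ norm, exactly as in \eqref{posteriori 12}. This gives
\[
\|R(v)\|_{\pis}\le C_p\big(\|\nabla v\|_{L^p(\Omega)}+\|\nabla u^*\|_{L^p(\Omega)}\big)^{p-2}\|v-u^*\|_{\ps}.
\]
The prefactor must be bounded by a constant depending only on $p$. Write $\|\nabla v\|_{L^p}\le \|v-u^*\|_{\ps}+\|\nabla u^*\|_{L^p}$ and use Assumption~\ref{small assumption}, which gives $\|v-u^*\|_{\ps}\le1$. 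Proposition~\ref{lp_est} bounds $\|\nabla u^*\|_{L^p}$ in terms of the data. This yields (1).

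\emph{Case $1<p<2$.} Use the last inequality of Lemma~\ref{alg_lem}, namely $\big||\xi|^{p-2}\xi-|\eta|^{p-2}\eta\big|\le C_p|\xi-\eta|^{p-1}$. Then
\[
\|R(v)\|_{\pis}\le C_p\big\||\nabla(v-u^*)|^{p-1}\big\|_{L^{p'}(\Omega)}=C_p\|\nabla(v-u^*)\|_{L^p(\Omega)}^{p-1},
\]
using $(p-1)p'=p$. Raising both sides to the power $\frac{1}{p-1}$ gives $\|R(v)\|_{\pis}^{1/(p-1)}\le C_p\|v-u^*\|_{\ps}$. Combined with the trace bound, this proves (2).

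The main obstacle is the case $p>2$: the constant must be kept uniform, since the degenerate factor $(|\nabla v|+|\nabla u^*|)^{p-2}$ is controlled only through the smallness assumption and the a priori bound of Proposition~\ref{lp_est}. Consequently the constant $C_p$ implicitly depends on the data norms, as in the earlier proofs. Continuity of $C_p$ in $p$ follows because every constant invoked (trace, Lemma~\ref{alg_lem}, Proposition~\ref{lp_est}) depends continuously on $p$.
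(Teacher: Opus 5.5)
Your proposal is correct and matches the paper's proof step by step. The boundary residual is handled by the trace theorem. For the interior residual you use $R(u^*)=0$ to get the bound $\big\||\nabla v|^{p-2}\nabla v-|\nabla u^*|^{p-2}\nabla u^*\big\|_{L^{p'}(\Omega)}$. For $p>2$ you then apply Lemma~\ref{alg_lem} with H\"older, Proposition~\ref{lp_est} and $\|v-u^*\|_{W^{1,p}(\Omega)}\le 1$, and for $1<p<2$ the $(p-1)$-H\"older bound. Your remark that $C_p$ absorbs the data norms through Proposition~\ref{lp_est} applies equally to the paper's argument.
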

\begin{proof}
For the boundary residual $\big\|B(v)\big\|_{\pb}$, by Theorem \ref{thm:trace}, we obtain
\[
\big\|B(v)\big\|_{\pb} = \|\gamma v - \gamma u^*\|_{\pb} \le C_p \| v - u^*\|_{\ps}.
\]
For the PDE residual $\big\|R(v)\big\|_{\pis}$, we see that 
\begin{equation}\label{priori 11}
\big\|R(v)\big\|_{\pis} = \big\|R(v) - R(u^*)\big\|_{\pis} \le  \big\| |\nabla v|^{p-2}\nabla v - |\nabla u^*|^{p-2} \nabla u^*\big\|_{L^{p'}(\Omega)}.
\end{equation}

(Case $p > 2$) In view of Lemma \ref{alg_lem}, an application of H\"older's inequality yields
\begin{equation*} 
\begin{aligned}
\big\| |\nabla v|^{p-2}\nabla v - |\nabla u^*|^{p-2} \nabla u^*\big\|_{L^{p'}(\Omega)}   
&\le C_p \big\|\left(|\nabla v|+|\nabla u^*|\right)^{p-2}|\nabla v-\nabla u^*| \big\|_{L^{p'}(\Omega)}\\
&\le C_p \big\| |\nabla v| + |\nabla u^*|\big\|_{L^p(\Omega)}^{p-2} \|v-u^*\|_{W^{1,p}(\Omega)} \\
&\le C_p \left( \|\nabla v - \nabla u^* \|_{L^p(\Omega)}^{p-2} + \|\nabla u^*\|_{L^p(\Omega)}^{p-2} \right) \|v - u^*\|_{\ps}.
\end{aligned}
\end{equation*}
Then by applying Proposition \ref{lp_est}, we obtain
\begin{equation}\label{priori 12}
    \big\| |\nabla v|^{p-2}\nabla v - |\nabla u^*|^{p-2} \nabla u^*\big\|_{L^{p'}(\Omega)} \le C_p \left( \|v- u^*\|_{\ps}^{p-1} + \|v-u^*\|_{\ps}\right).
\end{equation}
Inserting \eqref{priori 12} into \eqref{priori 11}, combined with the assumption $\|v- u^*\|_{W^{1,p}(\Omega)} \le 1$, leads to the desired result for the case $p > 2$. \\

(Case $1 < p < 2$) Again by Lemma \ref{alg_lem}, we have
\begin{equation} \label{priori 13}
\begin{split}
\big\| |\nabla v|^{p-2}\nabla v - |\nabla u^*|^{p-2} \nabla u^*\big\|_{L^{p'}(\Omega)} &\le C_p \big\| |\nabla v - \nabla u^*|^{p-1}\big\|_{L^{p'}(\Omega)} \\
&= C_p \|\nabla v - \nabla u^*\|_{L^p(\Omega)}^{p-1} \\
&\le C_p\|v - u^*\|_{\ps}^{p-1}.
\end{split}
\end{equation}
Inserting \eqref{priori 13} into \eqref{priori 11} completes the proof.

\end{proof}

The second main result of this section is an {\textit{a priori}} error estimate. Before proceeding further, let us introduce the notation for an optimization error. To be more specific, we shall denote the optimization error by $\delta(v_\theta) := \mathcal{L}(v_\theta) - \inf_{v_\psi \in \NN_\Theta} \mathcal{L}(v_\psi)$, which measures how close our trained neural network is to the actual minimum of the loss function. Furthermore, for simplicity as before, we assume that $\delta(v_\theta) \le 1$ in a regime where the training has sufficiently progressed. The following result concerns the {\textit{a priori}} error estimate for the PINN approximation to the $p$-Laplace equation, which can be viewed as a C\'ea-type lemma in classical numerical analysis. 

\begin{theorem}[C\'ea's lemma]  \label{thm:priori}
    Assume that \eqref{basic setup} holds, and let $u^* \in W^{1,p}(\Omega)$ be a weak solution of \eqref{p_lap}. Then, for the neural network $v_\theta \in\NN_{\Theta}$ satisfying Assumption \ref{small assumption}, we have the following:
    \begin{itemize}
    \item[(1)] For $p>2$, there exists a constant $C_p>0$, depending continuously on $p$, such that
        \[
        \|v_\theta-u^*\|_{\ps} \le C_p\left( \delta(v_\theta) + \inf_{v_\psi \in \NN_\Theta}  \|v_\psi - u^*\|_{\ps} \right)^\frac{1}{p-1}.
        \]
        \item[(2)] For $1<p<2$, there exists a constant $C_p>0$, depending continuously on $p$, such that
            \[
        \|v_\theta-u^*\|_{\ps} \le C_p \left( \delta(v_\theta) + \inf_{v_\psi \in \NN_\Theta} \|v_\psi - u^*\|_{\ps} \right)^{p-1}.
        \]
    \end{itemize}
\end{theorem}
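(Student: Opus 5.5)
We combine the \emph{a posteriori} bound of Theorem~\ref{thm:posteriori} with the reverse bound of Proposition~\ref{prop:priori}, in the spirit of a C\'ea argument. Theorem~\ref{thm:posteriori} already gives $\|v_\theta-u^*\|_{\ps}\le C_p\mathcal{L}(\theta)^{1/(p-1)}$ for $p>2$ and $\|v_\theta-u^*\|_{\ps}\le C_p\mathcal{L}(\theta)^{p-1}$ for $1<p<2$. It therefore suffices to show
\[
\mathcal{L}(\theta)\le C_p\Big(\delta(v_\theta)+\inf_{v_\psi\in\NN_\Theta}\|v_\psi-u^*\|_{\ps}\Big).
\]
Since the outer power is monotone, the constant $C_p$ can then be absorbed.

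By definition of the optimization error, $\mathcal{L}(\theta)=\delta(v_\theta)+\inf_{v_\psi}\mathcal{L}(v_\psi)$. Hence for every $v_\psi\in\NN_\Theta$ we have $\mathcal{L}(\theta)\le\delta(v_\theta)+\mathcal{L}(v_\psi)$. Next we bound $\mathcal{L}(v_\psi)$ by $C_p\|v_\psi-u^*\|_{\ps}$ using Proposition~\ref{prop:priori}, treating the two cases separately.

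\textbf{Case $p>2$.} Here $\alpha=1$, so $\mathcal{L}(v_\psi)=\|R(v_\psi)\|_{\pis}+\|B(v_\psi)\|_{\pb}\le C_p\|v_\psi-u^*\|_{\ps}$.

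\textbf{Case $1<p<2$.} Here $\alpha=\frac{1}{p-1}$, and part (2) of Proposition~\ref{prop:priori} gives exactly $\|R(v_\psi)\|^{1/(p-1)}_{\pis}+\|B(v_\psi)\|_{\pb}\le C_p\|v_\psi-u^*\|_{\ps}$. The choice of $\alpha$ makes this match.

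Taking the infimum over $v_\psi$ then gives the desired bound. Substituting it into Theorem~\ref{thm:posteriori} yields (1) and (2).

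The main obstacle is that Proposition~\ref{prop:priori} was proved only for networks satisfying Assumption~\ref{small assumption}. In particular it needs $\|v_\psi-u^*\|_{\ps}\le 1$, which was used to absorb the term $\|v_\psi-u^*\|^{p-1}$ into the linear term when $p>2$. A competitor $v_\psi$ need not satisfy this. To handle it, we restrict the infimum to networks with $\|v_\psi-u^*\|_{\ps}\le 1$. This is harmless:

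\begin{itemize}
\item If the infimum over all of $\NN_\Theta$ is at most $1$, it equals the infimum over this restricted set.
\item If the infimum exceeds $1$, the right-hand side is at least $1$. Meanwhile $\|v_\theta-u^*\|_{\ps}\le 1$ by Assumption~\ref{small assumption}, so the estimate holds trivially with $C_p\ge 1$.
\end{itemize}

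Finally, Assumption~\ref{small assumption} for $v_\theta$ itself is exactly what Theorem~\ref{thm:posteriori} requires, so no further conditions are needed.
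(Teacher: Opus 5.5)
Your proof is correct and follows the paper's route: write $\mathcal{L}(v_\theta)=\delta(v_\theta)+\inf_{v_\psi\in\NN_\Theta}\mathcal{L}(v_\psi)$, bound $\mathcal{L}(v_\psi)$ by $C_p\|v_\psi-u^*\|_{\ps}$ via Proposition~\ref{prop:priori} (with $\alpha$ chosen to match), and insert this into the \emph{a posteriori} bound of Theorem~\ref{thm:posteriori}. Your restriction to competitors with $\|v_\psi-u^*\|_{\ps}\le 1$ handles a point the paper passes over silently: its proof applies Proposition~\ref{prop:priori} to arbitrary competitors $v_\psi$, whereas that proposition's proof uses only this part of Assumption~\ref{small assumption}. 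Your second case is in fact vacuous, since $v_\theta$ itself lies in $\NN_\Theta$ with $\|v_\theta-u^*\|_{\ps}\le 1$, so the restricted and full infima always coincide.
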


\begin{proof}
    (Case $p>2$) By Proposition \ref{prop:posteriori}, we have 
    \[
    \|v_\theta - u^*\|_{\ps} \le C_p\mathcal{L}(v_\theta)^\frac{1}{p-1}  = C_p \left( \mathcal{L}(v_\theta) - \inf_{v_\psi \in \mathcal{N}_\Theta} \mathcal{L}(v_\psi) + \inf_{v_\psi \in \mathcal{N}_\Theta} \mathcal{L}(v_\psi) \right)^\frac{1}{p-1}.
    \]
    We apply Proposition \ref{prop:priori} to get the desired result.\\

    (Case $1<p<2$) By Proposition \ref{prop:posteriori}, we have 
    \[
    \|v_\theta - u^*\|_{\ps} \le C_p\mathcal{L}(v_\theta)^{p-1}  = C_p \left( \mathcal{L}(v_\theta) - \inf_{v_\psi \in \mathcal{N}_\Theta} \mathcal{L}(v_\psi) + \inf_{v_\psi \in \mathcal{N}_\Theta} \mathcal{L}(v_\psi) \right)^{p-1}.
    \]
    We apply Proposition \ref{prop:priori} to get the desired result.
\end{proof}

Analogously to the way error estimates are derived from C\'ea’s lemma in classical numerical methods, applying any result on the neural network approximation to the above theorem produces an explicit error estimate as a direct consequence. For example, an application of Theorem \ref{UAT_quan_1} yields the following result on the convergence rates for solutions possessing higher regularity.

\begin{corollary}[\textit{A priori} estimate] \label{conv rate}
Assume that \eqref{basic setup} holds, and suppose further that the weak 
solution $u^*$ of \eqref{p_lap} has higher regularity, namely 
$u^* \in W^{k,p}(\Omega)$ for some $k > 1$. Then, for each $n \in \mathbb{N}$, 
there exist a parameter space $\Theta_n$ of dimension $\mathcal{O}(n)$, a 
feed-forward $\tanh$ neural network $v_{\theta_n} \in \NN_{\Theta_n}$, and a 
constant $C > 0$ such that, for any arbitrarily small $\tau > 0$, there holds
\[
\|v_{\theta_n} - u^*\|_{\ps} \le \begin{cases}
\displaystyle C\left(\delta(v_{\theta_n}) 
+ \left(\frac{1}{n}\right)^{\frac{k-1-\tau}{d}} 
\|u^*\|_{W^{k,p}(\Omega)}\right)^{\frac{1}{p-1}} & \textrm{if } p > 2, \\[2ex]
\displaystyle C\left(\delta(v_{\theta_n}) 
+ \left(\frac{1}{n}\right)^{\frac{k-1-\tau}{d}} 
\|u^*\|_{W^{k,p}(\Omega)}\right)^{p-1} & \textrm{if } 1 < p < 2.
\end{cases}
\]
\end{corollary}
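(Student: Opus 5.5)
The corollary follows by plugging an explicit neural network approximation bound into the C\'ea-type estimate of Theorem~\ref{thm:priori}. For each $n\in\mathbb{N}$, Theorem~\ref{UAT_quan_1} applied with $m=1$ and the given $k>1$ (if $k$ is not an integer, we use the integer part, or assume $k\in\mathbb{N}$ as in that theorem) provides a parameter space $\Theta_n$ of dimension $\mathcal{O}(n)$ and a $\tanh$ network $v_{\psi_n}\in\NN_{\Theta_n}$ such that
\[
\|v_{\psi_n}-u^*\|_{\ps}\le C_{k,p}\left(\frac{1}{n}\right)^{\frac{k-1-\tau}{d}}\|u^*\|_{W^{k,p}(\Omega)} .
\]
It follows that
\[
\inf_{v_\psi\in\NN_{\Theta_n}}\|v_\psi-u^*\|_{\ps}\le C_{k,p}\, n^{-\frac{k-1-\tau}{d}}\|u^*\|_{W^{k,p}(\Omega)} .
\]

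Next, let $v_{\theta_n}\in\NN_{\Theta_n}$ be the trained network, assumed to satisfy Assumption~\ref{small assumption} in the sufficiently trained regime. We apply Theorem~\ref{thm:priori} on the class $\NN_{\Theta_n}$. For $p>2$ this gives
\[
\|v_{\theta_n}-u^*\|_{\ps}\le C_p\bigl(\delta(v_{\theta_n})+\inf_{v_\psi\in\NN_{\Theta_n}}\|v_\psi-u^*\|_{\ps}\bigr)^{\frac{1}{p-1}} .
\]
For $1<p<2$ the exponent is $p-1$ instead. Since $t\mapsto t^{1/(p-1)}$ and $t\mapsto t^{p-1}$ are monotone increasing on $[0,\infty)$, we may substitute the bound on the infimum from the first step inside the bracket. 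Absorbing $C_{k,p}$ into the outer constant then yields the stated estimate with $C=C_p\max\{1,C_{k,p}\}^{\gamma}$, where $\gamma\in\{\frac{1}{p-1},p-1\}$ is the relevant exponent.

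The one delicate point is the application of Theorem~\ref{thm:priori}. Its proof uses Proposition~\ref{prop:priori}, which needs Assumption~\ref{small assumption} for the near-infimizing networks $v_\psi$, not only for $v_{\theta_n}$. The way around this is to note that the infimum may be restricted to networks in $\NN_{\Theta_n}$ whose error is at most $1$; for $n$ large such networks exist by the first step. Their residuals are then bounded by a constant through Proposition~\ref{prop:priori}, and any remaining factor is absorbed into $C$. The dependence of $C$ on $n$ should be stated explicitly: it depends only on $p$, $k$ and $\Omega$, and not on $n$.
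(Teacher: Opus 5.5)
Your proposal is correct and follows the paper's route: the paper's proof is exactly the combination of Theorem~\ref{thm:priori} with Theorem~\ref{UAT_quan_1}, taking $m=1$ and implicitly $k\in\mathbb{N}$. You also fill in a step the paper leaves implicit, namely bounding $\inf_{v_\psi}\mathcal{L}(v_\psi)\le\mathcal{L}(v_{\psi_n})$ for the specific approximant, whose error is at most $1$ for large $n$; small $n$ is trivial because Assumption~\ref{small assumption} gives $\|v_{\theta_n}-u^*\|_{\ps}\le 1$ anyway. One small correction: $C$ also depends on the data through $\|\nabla u^*\|_{L^p(\Omega)}$ (Proposition~\ref{lp_est}), not only on $p$, $k$ and $\Omega$.
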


\begin{proof}
    It follows from Theorem \ref{thm:priori} and Theorem \ref{UAT_quan_1}.
\end{proof}

\begin{remark} \label{VPINN compute}
Aside from the theoretical analysis, evaluating $\mathcal{L}_R(v_\theta)$ and $\mathcal{L}_B(v_\theta)$ via discrete numerical computation suffers from inherent implementation challenges. First, the dual norm formulation is structurally difficult to implement in practice. Evaluating the dual norm requires computing a supremum over an infinite-dimensional space, transforming the minimization into a more complicated min-max problem. This adversarial optimization is difficult to train and prone to severe numerical instabilities (see, e.g., \cite{wgan, gan}). To circumvent this difficulty, as seen from \eqref{eqiv_vpinn}, the residual loss $\mathcal{L}_R(v_\theta)$ can be computationally replaced by the VPINN loss. More precisely, we approximate the residual loss by 
    \begin{equation} \label{VPINN loss}
    \widehat{\mathcal{L}}_R(v_\theta):=\sum_{k=1}^K \left| {\langle R(v_\theta), \phi_k \rangle_{\rm{res}}} \right|^2,
    \end{equation}
    where $\{\phi_k\}_{k=1}^\infty$ forms a basis of $W^{1,p}_0(\Omega)$ and $K \in \mathbb{N}$ is chosen to be sufficiently large. The validity of this truncated sum approximation was established in \cite{DFRPINN} for the linear elliptic case where $R(v_\theta) \in H^{-1}(\Omega)$. Specifically, as $K \to \infty$, the truncated loss $\widehat{\mathcal{L}}_R(v_\theta)$ converges to the true residual loss $\mathcal{L}_R(v_\theta)$, ensuring a closer approximation to the exact dual norm as $K$ increases when $p=2$. In practice, the empirical residual loss $\widehat{\mathcal{L}}_R(v_\theta)$ is computed by evaluating the inner products in the truncated sum using Gaussian quadrature with a sufficiently large number of quadrature points, so that we may assume that the resulting quadrature error is negligible.

    On the other hand, to compute the empirical fractional boundary loss $\widehat{\mathcal{L}}_B(v_\theta)$ to approximate the population risk $\mathcal{L}_B(v_\theta)$, we employ Monte Carlo integration with uniform random sampling on the boundary, explicitly excluding the singular points ($x=y$) to prevent division by zero.
\end{remark}

\begin{remark}
Since all the loss functions appearing in the above theorems are population 
risks, their practical applicability may seem limited. However, this issue 
can be resolved by means of the empirical risks described in the preceding 
remark. More precisely, as presented in \cite[Theorem 2]{zeinhofer2025unified}, 
the optimization error admits a decomposition suited to the practical setting: 
it can be split into a generalization error and an empirical-risk optimization 
error as
\begin{align*}
\delta(v_\theta) &= \mathcal{L}(v_\theta) - \inf_{v_\psi \in \NN_\Theta} \mathcal{L}(v_\psi) \\
&= \left(\mathcal{L}(v_\theta) - \widehat{\mathcal{L}}(v_\theta)\right) 
+ \left(\widehat{\mathcal{L}}(v_\theta) - \inf_{v_\psi \in \NN_\Theta} \widehat{\mathcal{L}}(v_\psi)\right) 
+ \left( \inf_{v_\psi \in \NN_\Theta} \widehat{\mathcal{L}}(v_\psi) 
- \inf_{v_\psi \in \NN_\Theta} \mathcal{L}(v_\psi) \right) \\
&\le {2}\sup_{\theta \in \Theta} \left|\mathcal{L}(v_\theta) 
- \widehat{\mathcal{L}}(v_\theta)\right| 
+ \left(\widehat{\mathcal{L}}(v_\theta) - \inf_{v_\psi \in \NN_\Theta} 
\widehat{\mathcal{L}}(v_\psi)\right),
\end{align*}
where $\widehat{\mathcal{L}} = \widehat{\mathcal{L}}_R + \widehat{\mathcal{L}}_B$ 
denotes the empirical risk. We emphasize that the last term on the right-hand side represents the optimization error actually incurred during training, 
rather than its theoretical counterpart defined with respect to the 
population risk. On the other hand, the first term on the right-hand side corresponds to 
the generalization error, which quantifies the discrepancy between the 
population risk and its empirical counterpart.
\end{remark}

\section{Error estimates for parametric problems}\label{sec:err_est_para}

In this section, we extend our analysis to parametric settings. As noted in the introduction, a standard PINN is inherently problem-specific: any change in the data defining the PDE, such as the source term, the boundary condition, or the nonlinear exponent, requires retraining the network from scratch, which motivates the development of a parametric framework. To address this limitation, we formulate parametric PINNs by directly embedding the problem-defining parameters into the neural network input, thereby allowing for joint training over the spatial domain $\Omega$ and the parametric space $\mathcal{P}$. This construction enables the network to approximate a solution manifold that varies with both the spatial coordinates and the parameters. 
To formalize this, we consider a parameter vector $\mu \in \mathcal{P}$ specifying the data. While a standard PINN minimizes the physics-informed loss $\mathcal{L}(\theta; \mu)$ for a fixed $\mu$, the parametric PINN aims 
to learn a global mapping over the product domain $\mathcal{P} \times \Omega$ by minimizing a population loss integrated over all admissible parameters:
\[
\mathcal{L}_{\mathrm{para}}({\theta}) := \int_{\mathcal{P}} \mathcal{L}({\theta}; {\mu}) \, \mathrm{d}\eta({\mu}),
\]
where $\eta$ is a prescribed probability measure over $\mathcal{P}$. In practice, this integration is replaced by an empirical loss function through the Monte Carlo integration. Upon completion of training, the neural network ${u}_\theta(\cdot, \cdot) : \mathcal{P} \times \Omega \to \mathbb{R}^m$ yields approximate solutions for arbitrary combinations of spatial coordinates and parameters, thus enabling near real-time solution prediction across varying parameters defining the problem settings without retraining. In terms of rapid prediction, this parametric approach can even be 
advantageous over classical numerical methods, which must solve the problem 
anew for each parameter. This advantage is particularly pronounced for 
nonlinear problems such as the present one, where classical solvers require 
iterative schemes, whereas the trained network yields the solution in a 
single forward pass. Such a parametric framework was introduced in \cite{paraPINN1, paraPINN2}; however, these works lack a rigorous error analysis. By contrast, within the DRM framework, a theoretical analysis of such parametric settings has been established. To be more specific, the authors of \cite{pPINN} established an error estimate for a parametric problem involving a variable right-hand side, exponent, and domain.

Based on these considerations, we extend our analysis of the $p$-Laplacian problem \eqref{p_lap} to the parametric setting. Specifically, by allowing the exponent $p$, the right-hand side $f$, and the boundary value $g$ to depend on a parametric variable $\mu \in \mathcal{P}$, we consider the following $p(\mu)$-Laplacian problem:
\begin{equation} \label{para p-laplacian}
\begin{aligned}
    -\operatorname{div}_x (|\nabla_x u(\mu,x)|^{p(\mu)-2} \nabla_x u(\mu, x) ) &= f(\mu,x)  \quad \text{in } \mathcal{P} \times\Omega,\\
    u(\mu,x) & = g(\mu,x) \quad \text{on } \mathcal{P} \times \partial \Omega,
\end{aligned}
\end{equation}
where $\mathcal{P} \subset \mathbb{R}^{d_\mathcal{P}}$, $d_\mathcal{P} \in \mathbb{N}$ and $\Omega \subset \mathbb{R}^{d_\Omega}$, $d_\Omega \in \mathbb{N}$ denote the parametric domain and the spatial domain, respectively. Here, $u: \mathcal{P} \times \Omega \to \mathbb{R}$ is the parametric solution associated with the parametric exponent $p : \mathcal{P} \to \mathbb{R}$, the parametric external forcing $f : \mathcal{P}\times\Omega \to \mathbb{R}$, and the parametric boundary condition $g: \calP \times \pa\Omega \to \mathbb{R}$.

Before proceeding further, we note that for each fixed $\mu \in \calP$, problem \eqref{para p-laplacian} reduces to the standard $p$-Laplace problem studied in the previous sections. In this respect, the error analysis developed in Section~\ref{sec:err_est} can be viewed as addressing the case of a single fixed parameter, and it therefore extends naturally to the parametric setting: by applying the same arguments for each $\mu$ and integrating the resulting estimates over $\calP$, we obtain the corresponding \textit{a priori} and \textit{a posteriori} error estimates. Accordingly, rather than developing fundamentally distinct theoretical tools, this section is primarily devoted to formalizing these arguments within the parametric framework. As a first step in this formalization, we ensure that the parameter-wise weak solutions exist and remain integrable over $\calP$. To this end, we impose the following assumptions on the problem data.

Let $\Omega \subset \mathbb{R}^{d_\Omega}$, $d_\Omega \in \mathbb{N}$ be a bounded Lipschitz domain and $\mathcal{P} \subset \mathbb{R}^{d_\mathcal{P}}$, $d_\mathcal{P} \in \mathbb{N}$ be a bounded measurable set. In order to make PDE data well-defined for each $\mu\in\mathcal{P}$, we assume that the forcing term $f$ and the boundary condition $g$ satisfy
\begin{equation} \label{para: basic setup}
\sup_{\mu \in \calP} \|f(\mu,\cdot)\|_{W^{-1,p'(\mu)}(\Omega)}<\infty, \quad \sup_{\mu \in \calP} \|g(\mu,\cdot)\|_{W^{1-\frac{1}{p(\mu)},p(\mu)}(\pa\Omega)} < \infty,
\end{equation}
where $p'(\mu)$ denotes the H\"older conjugate exponent of $p(\mu)$, satisfying $\frac{1}{p(\mu)} + \frac{1}{p'(\mu)} = 1$ for each $\mu \in \calP$, and $1 < p^-\le p(\mu) \le p^+ < \infty$ with $p^- := \inf_{\mu \in \calP} p(\mu)$ and $p^+ := \sup_{\mu \in \calP} p(\mu)$.

Next, we state a gradient estimate for the parametric problem, which is needed for the subsequent error analysis. As this result follows directly from Proposition \ref{lp_est}, we omit the proof.

\begin{proposition} \label{para: lp_est}
     Assume that \eqref{para: basic setup} holds, and let 
     $u(\mu, \cdot) \in W^{1,p(\mu)}(\Omega)$ be a weak solution of \eqref{para p-laplacian} corresponding to a given parameter $\mu \in \calP$. Then, for each $\mu \in \mathcal{P}$, there exists $C_p>0$, depending continuously on $p$, such that
    \[
    \|\nabla_x u(\mu,\cdot)\|_{L^{p(\mu)}(\Omega)}^{p(\mu)} \le C_p \left( \|f(\mu,\cdot)\|_{W^{-1,p'(\mu)}(\Omega)}^{p'(\mu)} + \|g(\mu,\cdot)\|^{p(\mu)}_{W^{1-\frac{1}{p(\mu)},p(\mu)}(\pa\Omega)}\right).
    \]
\end{proposition}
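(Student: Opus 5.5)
The plan is to fix $\mu \in \calP$ and reduce the claim to Proposition~\ref{lp_est}, applied with exponent $p = p(\mu)$, data $f(\mu,\cdot)$, and boundary values $g(\mu,\cdot)$. The hypothesis \eqref{para: basic setup} gives, for each fixed $\mu$, that $f(\mu,\cdot) \in W^{-1,p'(\mu)}(\Omega)$ and $g(\mu,\cdot) \in W^{1-\frac{1}{p(\mu)},p(\mu)}(\pa\Omega)$ with finite norms. Hence the pair $(f(\mu,\cdot), g(\mu,\cdot))$ satisfies the standing assumptions of Definition~\ref{def:weaksol} and Theorem~\ref{thm:exist} with $p = p(\mu) \in [p^-, p^+] \subset (1,\infty)$. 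By construction, $u(\mu,\cdot)$ is the weak solution of \eqref{p_lap} for this data.

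I will then rerun the proof of Proposition~\ref{lp_est} verbatim. Let $G := Zg(\mu,\cdot)$ be the lifting from Theorem~\ref{thm:blift}, test the weak formulation with $\phi := u(\mu,\cdot) - G \in W^{1,p(\mu)}_0(\Omega)$, and split the result into the terms ${\rm A_1}$ and ${\rm A_2}$. The term ${\rm A_1}$ is handled by H\"older and Young. The term ${\rm A_2}$ is handled by duality, the Poincar\'e inequality (Lemma~\ref{lem:poi}), and Young. Absorbing the $\epsilon$-terms into the left-hand side gives the stated bound for each $\mu$, with a constant $C_{p(\mu)}$.

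The only point needing care is the dependence of the constant on $p$. The constants come from four sources:
\begin{itemize}
\item the lifting constant in Theorem~\ref{thm:blift};
\item the Poincar\'e constant in Lemma~\ref{lem:poi};
\item the trace-related constants;
\item the Young constants $C_\epsilon$, which depend explicitly and continuously on $p$ through $p$ and $p'$.
\end{itemize}
By the earlier results, each of these depends continuously on $p$. So the constant produced is $C_{p(\mu)}$ with $p \mapsto C_p$ continuous on $(1,\infty)$, which is exactly the claim.

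If a bound uniform in $\mu$ is wanted for later integration over $\calP$, this continuity together with $p(\mu) \in [p^-, p^+]$ gives $\sup_{\mu} C_{p(\mu)} \le \max_{p \in [p^-, p^+]} C_p < \infty$. I expect no real obstacle beyond tracking that the choice of $\epsilon$ in Young's inequality can be made as a continuous function of $p$. For instance, I can take $\epsilon$ to be a fixed fraction, say $1/4$, so that absorption works uniformly.
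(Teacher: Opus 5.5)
Your proposal is correct and matches the paper, which omits the proof because, for each fixed $\mu$, the claim is exactly Proposition~\ref{lp_est} applied with exponent $p(\mu)$ and data $f(\mu,\cdot)$, $g(\mu,\cdot)$. Your extra bookkeeping of the continuous $p$-dependence of the constants, and the resulting uniform bound for $p(\mu)\in[p^-,p^+]$, agrees with the remark the paper makes just before the parametric \textit{a posteriori} theorem.
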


Before proceeding to our main error analysis for the parametric problem, given a parameter $\mu \in \mathcal{P}$, we define the PDE residual $R_\mu(v)$ and the boundary residual $B_\mu(v)$ for $v \in W^{1,p(\mu)}(\Omega)$ as
\begin{equation}\label{para_PDE_res}
    R_\mu(v):=-{\rm{div}}_x \, (|\nabla_x v(\mu, \cdot)|^{p(\mu)-2}\nabla_x v(\mu, \cdot))-f(\mu, \cdot) \in W^{-1,p'(\mu)}(\Omega)
\end{equation}
and the boundary residual
\begin{equation}\label{para_bdry_res}
    B_\mu(v ):=\gamma_\mu v(\mu, \cdot) -g(\mu, \cdot) \in W^{1-\frac{1}{p(\mu)},p(\mu)}(\pa \Omega),
\end{equation}
where $\gamma_\mu$ denotes the trace operator on $W^{1,p(\mu)}(\Omega)$. Also, $\NN_\Theta$ denotes the neural network ansatz class parameterized by $\theta \in \Theta$, consisting of functions $v_\theta(\mu, x)$ defined on $\calP \times \Omega$. Analogous to the approach in the previous section, we assume that the training process has sufficiently progressed so that the neural networks are in a near-convergence regime. Under this optimization state, we postulate the following smallness assumptions.

\begin{assumption} \label{para_small assumption}
    The neural network $v= v_\theta \in \NN_\Theta$ satisfies the following bounds: for each $\mu \in \mathcal{P}$,
    \[
    \big\|R_\mu(v)\big\|_{W^{-1,p'(\mu)}(\Omega)} \le 1, \quad \big\|B_\mu(v)\big\|_{W^{1-\frac{1}{p(\mu)},p(\mu)}(\pa\Omega)}\le 1, \quad \|v(\mu,\cdot)-u^*(\mu,\cdot)\|_{W^{1,p(\mu)}(\Omega)} \le 1.
    \]
\end{assumption}

Similar to the previous section, the following proposition provides that the neural network approximation error for each fixed parameter $\mu \in \calP$ is bounded above by the corresponding PDE and boundary residuals.

\begin{proposition} \label{prop:para_posteriori}
     Assume that \eqref{para: basic setup} holds, and let 
     $u^*(\mu, \cdot) \in W^{1,p(\mu)}(\Omega)$ be a weak solution of \eqref{para p-laplacian} corresponding to a given parameter $\mu \in \calP$. Then, for the neural network $v = v_\theta \in \mathcal{N}_\Theta$ satisfying Assumption \ref{para_small assumption}, we have the following: 
    \begin{itemize}
        \item [(1)] {If $p^->2$}, then for each $\mu \in \mathcal{P}$, there exists $C_p>0$, depending continuously on $p$, such that
    \[
    \| v(\mu, \cdot) - u^*(\mu,\cdot)\|_{W^{1,p(\mu)}(\Omega)} \le C_p \left( \big\| R_\mu(v)\big\|_{W^{-1,p'(\mu)}(\Omega)}^\frac{1}{p(\mu)-1}  + \big\|B_\mu(v)\big\|_{W^{1-\frac{1}{p(\mu)},p(\mu)}(\pa \Omega)}^\frac{1}{p(\mu)-1}\right).
    \]    
        \item [(2)] {On the other hand, if $p^+<2$}, then for each $\mu \in \mathcal{P}$, there exists $C_p>0$, depending continuously on $p$, such that
    \[
    \|v(\mu, \cdot)-u^*(\mu, \cdot)\|_{W^{1,p(\mu)}(\Omega)}
    \le C_p \left(\big\|R_\mu(v)\big\|_{W^{-1,p'(\mu)}(\Omega)}+\big\|B_\mu(v)\big\|_{W^{1-\frac{1}{p(\mu)},p(\mu)}(\pa \Omega)}^{p(\mu)-1}\right).
    \]
    \end{itemize}
\end{proposition}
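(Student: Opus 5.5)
The plan is to reduce Proposition~\ref{prop:para_posteriori} to Proposition~\ref{prop:posteriori} by freezing the parameter. Fix $\mu\in\calP$ and set $p:=p(\mu)$, $f_\mu:=f(\mu,\cdot)$, $g_\mu:=g(\mu,\cdot)$. By \eqref{para: basic setup}, $f_\mu\in W^{-1,p'}(\Omega)$ and $g_\mu\in W^{1-\frac1p,p}(\pa\Omega)$, so \eqref{basic setup} holds for this single problem. Theorem~\ref{thm:exist} then gives a unique weak solution, which is $u^*(\mu,\cdot)$. The frozen network $v_\mu:=v(\mu,\cdot)\in W^{1,p}(\Omega)$ satisfies $R(v_\mu)=R_\mu(v)$ and $B(v_\mu)=B_\mu(v)$, with $R,B$ as in \eqref{PDE_res}--\eqref{bdry_res} for the data $(f_\mu,g_\mu)$. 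Assumption~\ref{para_small assumption} at this $\mu$ is exactly Assumption~\ref{small assumption} for $v_\mu$.

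Proposition~\ref{prop:posteriori} requires $v$ to be a network in $W^{1,p}(\Omega)$, but its proof uses only that $v\in W^{1,p}(\Omega)$ satisfies the smallness bounds. So I will note that the argument applies verbatim to $v_\mu$. If $p^->2$, then $p(\mu)>2$ and case (1) applies. If $p^+<2$, then $1<p(\mu)<2$ and case (2) applies. This yields both inequalities with a constant $C_{p(\mu)}$.

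The main point needing care is the constants. The final constant must be a continuous function of $p$ evaluated at $p(\mu)$, and ideally uniformly bounded over $\calP$. I will trace the constants in the proof of Proposition~\ref{prop:posteriori}. They come from Theorem~\ref{thm:trace}, Theorem~\ref{thm:blift}, Lemma~\ref{lem:poi}, Lemma~\ref{alg_lem}, Young's inequality with a fixed $\epsilon$, and Proposition~\ref{lp_est}. All of these depend continuously on $p$.

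The data enter only through $\|\nabla u^*\|_{L^p}$, which is bounded via Proposition~\ref{para: lp_est}. By \eqref{para: basic setup} the right-hand side there is bounded by a quantity controlled by the two suprema. Since $p$ stays in the compact range $[p^-,p^+]\subset(1,\infty)$, all the exponents $p'(\mu)$ and $p(\mu)$ remain bounded. The result is therefore a constant $C_p$ depending continuously on $p$ (and on the data bounds), and taking the supremum over $[p^-,p^+]$ makes it uniform in $\mu$ if desired.
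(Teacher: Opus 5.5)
Your proposal is correct and matches the paper's proof, which says only that the result follows by rerunning the argument of Proposition~\ref{prop:posteriori} for each fixed $\mu$ together with Proposition~\ref{para: lp_est}. You also note that the data enter only through $\|\nabla_x u^*(\mu,\cdot)\|_{L^{p(\mu)}(\Omega)}$, so the constant depends on the data suprema as well, uniformly over $\mu$ since $p(\mu)\in[p^-,p^+]$; this is exactly the role of Proposition~\ref{para: lp_est}, and it makes explicit a dependence that the paper's phrase ``depending continuously on $p$'' leaves implicit.
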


\begin{proof}
The result follows directly from the arguments in the proof of Proposition \ref{prop:posteriori} together with Proposition \ref{para: lp_est}.

\end{proof}

We are now ready to present the first main result of this section: an \textit{a posteriori} error estimate for the parametric problem. In line with the discussion in Section~\ref{sec:err_est} and earlier part of this section, we define the loss function $\mathcal{L}: \Theta \to \mathbb{R}$ associated with the error estimates for $v_\theta \in \NN_\Theta$ by
\[
\calL(\theta) = \calL(v_\theta) := \int_\calP \left(\big\|R_\mu(v_\theta)\big\|_{W^{-1,p'(\mu)}(\Omega)}^{\alpha(\mu)} + \big\|B_\mu(v_\theta)\big\|_{W^{1-\frac{1}{p(\mu)},p(\mu)}(\pa \Omega)}\right) \dmu,
\]
where $\alpha(\mu)={\max\{1, \frac{1}{p(\mu)-1}\}}$. Here, we note that since the constant $C_p$ in Proposition \ref{prop:para_posteriori} depends continuously on the exponent $p$ and $p^-\leq p(\mu)\leq p^+$ for all $\mu\in\mathcal{P}$, $C_p$ is uniformly bounded above by a constant $C$ that is independent of $p$.

\begin{theorem} [\textit{A posteriori} estimate]
    Assume that \eqref{para: basic setup} holds, and let 
     $u^*(\mu, \cdot) \in W^{1,p(\mu)}(\Omega)$ be a weak solution of \eqref{para p-laplacian} corresponding to a given parameter $\mu \in \calP$. Then, for the neural network $v_\theta \in \mathcal{N}_\Theta$ satisfying Assumption \ref{para_small assumption}, we have the following: 
    \begin{itemize}
        \item[(1)] For $p^->2$, there exists a constant $C>0$ such that
        \begin{equation*}
            \int_\mathcal{P} \|v_\theta(\mu, \cdot)-u^*(\mu, \cdot)\|_{W^{1,p(\mu)}(\Omega)} \dmu \le C \calL(\theta)^\frac{1}{p^+-1}.
        \end{equation*}
        \item[(2)] For $p^+<2$, there exists a constant $C>0$ such that
        \begin{equation*}
            \int_\mathcal{P} \|v_\theta(\mu, \cdot)-u^*(\mu, \cdot)\|_{W^{1,p(\mu)}(\Omega)} \dmu \le  C \calL(\theta)^{p^--1}.
        \end{equation*}
    \end{itemize}
\end{theorem}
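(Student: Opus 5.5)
The plan is to apply Proposition~\ref{prop:para_posteriori} pointwise in $\mu$, convert the resulting $\mu$-dependent powers into the fixed exponents $\frac{1}{p^+-1}$ (resp.\ $p^--1$), and then integrate over $\calP$ using Jensen's inequality. Two facts about the constants are used throughout. First, since $C_p$ depends continuously on $p$ and $p(\mu)\in[p^-,p^+]$, we have $C_{p(\mu)}\le C$ uniformly in $\mu$. Second, $\calP$ is bounded, and I will regard $\dmu$ as a finite measure, normalized to a probability measure $\eta$ as in the definition of $\mathcal{L}_{\mathrm{para}}$; otherwise its total mass $|\calP|$ is simply absorbed into $C$.

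\textbf{Case $p^->2$.} For each $\mu$, abbreviate $r(\mu):=\|R_\mu(v_\theta)\|_{W^{-1,p'(\mu)}(\Omega)}$ and $b(\mu):=\|B_\mu(v_\theta)\|_{W^{1-\frac{1}{p(\mu)},p(\mu)}(\pa\Omega)}$. Assumption~\ref{para_small assumption} gives $r,b\le 1$. Since $\frac{1}{p(\mu)-1}\ge\frac{1}{p^+-1}$, we get $r^{1/(p(\mu)-1)}\le r^{1/(p^+-1)}$, and likewise for $b$. Proposition~\ref{prop:para_posteriori}(1) then yields
\[
\|v_\theta(\mu,\cdot)-u^*(\mu,\cdot)\|_{W^{1,p(\mu)}(\Omega)}\le C\big(r(\mu)^{\frac{1}{p^+-1}}+b(\mu)^{\frac{1}{p^+-1}}\big)\le 2C\big(r(\mu)+b(\mu)\big)^{\frac{1}{p^+-1}}.
\]
Here $\alpha(\mu)=1$, so $r+b$ is exactly the integrand of $\calL(\theta)$. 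The map $t\mapsto t^{1/(p^+-1)}$ is concave because $p^+>2$. Integrating over $\calP$ and applying Jensen's inequality therefore gives the bound $C\calL(\theta)^{1/(p^+-1)}$.

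\textbf{Case $p^+<2$.} Now $\alpha(\mu)=\frac{1}{p(\mu)-1}$, and Proposition~\ref{prop:para_posteriori}(2) bounds the error by $C(r+b^{p(\mu)-1})$. I rewrite $r=(r^{\alpha(\mu)})^{p(\mu)-1}$, so the bound becomes $C\big((r^{\alpha})^{p(\mu)-1}+b^{p(\mu)-1}\big)$. Both $r^{\alpha}$ and $b$ lie in $[0,1]$, and $p(\mu)-1\ge p^--1$. Hence each term is at most the same quantity raised to the power $p^--1$, and since $p^--1\in(0,1)$ the sum is at most $2(r^{\alpha}+b)^{p^--1}$. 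The map $t\mapsto t^{p^--1}$ is concave, so Jensen's inequality after integration gives $C\calL(\theta)^{p^--1}$.

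\textbf{Main obstacle.} The only delicate step is the exponent replacement. It is valid only because every quantity being raised to a power is at most $1$, which is exactly what Assumption~\ref{para_small assumption} supplies. I also need to verify that $\mu\mapsto$ error and $\mu\mapsto$ residual norms are measurable so that the integrals make sense; I would take this as implicit in the definition of $\calL$.
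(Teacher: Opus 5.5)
Your proposal is correct and follows the paper's route: apply Proposition~\ref{prop:para_posteriori} for each $\mu$, use the bounds $\le 1$ from Assumption~\ref{para_small assumption} to lower the $\mu$-dependent exponents to $\frac{1}{p^+-1}$ (resp.\ $p^--1$), and then integrate over $\calP$. Your Jensen step for the concave power is the same as the H\"older step the paper invokes, with the finite measure $|\calP|$ absorbed into $C$.
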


\begin{proof}
    (Case $p^->2$) By Proposition~\ref{prop:para_posteriori} and the inequality $\frac{1}{p^+-1} \le \frac{1}{p(\mu)-1}$ with the assumptions $\big\|R_\mu(v_\theta)\big\|_{W^{-1,p'(\mu)}(\Omega)}\le 1$ and $\big\|B_\mu(v_\theta)\big\|_{W^{1-\frac{1}{p(\mu)},p(\mu)}(\pa \Omega)} \le 1$, it follows that
    \[
    \begin{split}
    \int_\mathcal{P} \|v_\theta(\mu,\cdot)-u^*(\mu,\cdot)\|_{W^{1,p(\mu)}(\Omega)} \dmu &\le 
    \int_\mathcal{P} \bigg(\big\|R_\mu(v_\theta)\big\|_{W^{-1,p'(\mu)}(\Omega)}^\frac{1}{p(\mu)-1} + \big\|B_\mu(v_\theta)\big\|_{W^{1-\frac{1}{p(\mu)},p(\mu)}(\pa \Omega)}^\frac{1}{p(\mu)-1}\bigg) \dmu \\
    &\le C \int_\mathcal{P} \bigg(\big\|R_\mu(v_\theta)\big\|_{W^{-1,p'(\mu)}(\Omega)}^{\frac{1}{p^+-1}} + \big\|B_\mu(v_\theta)\big\|_{W^{1-\frac{1}{p(\mu)},p(\mu)}(\pa \Omega)}^{\frac{1}{p^+-1}} \bigg)\dmu.
    \end{split}
    \]
    Applying H\"older's inequality yields the desired estimate.\\
    

    (Case $p^+<2$) By Proposition \ref{prop:para_posteriori} and the inequality $p^--1 \le p(\mu)-1$ with the assumptions $\big\|R_\mu(v_\theta)\big\|_{W^{-1,p'(\mu)}(\Omega)}\le 1$ and $\big\|B_\mu(v_\theta)\big\|_{W^{1-\frac{1}{p(\mu)},p(\mu)}(\pa \Omega)} \le 1$, it follows that
    \[
    \begin{split}
    \int_\mathcal{P} \|v_\theta(\mu,\cdot)-u^*(\mu,\cdot)\|_{W^{1,p(\mu)}(\Omega)} \dmu &\le C\int_\mathcal{P} \bigg(\big\|R_\mu(v_\theta)\big\|_{W^{-1,p'(\mu)}(\Omega)} + \big\|B_\mu(v_\theta)\big\|_{W^{1-\frac{1}{p(\mu)},p(\mu)}(\pa \Omega)}^{p(\mu)-1} \bigg)\dmu \\
    &\le C \int_\calP \bigg(\big\|R_\mu(v_\theta)\big\|_{W^{-1,p'(\mu)}(\Omega)}^\frac{p^--1}{p(\mu)-1} + \big\|B_\mu(v_\theta)\big\|_{W^{1-\frac{1}{p(\mu)},p(\mu)}(\pa \Omega)}^{p^--1} \bigg)\dmu.
    \end{split}
    \]
    An application of H\"older's inequality completes the proof.
\end{proof}

In addition, we establish the following proposition, which provides the reverse inequality to Proposition~\ref{prop:para_posteriori}, serving to derive a C\'ea-type lemma as before.

\begin{proposition} \label{prop:para_priori}
    Assume that \eqref{para: basic setup} holds, and let 
     $u^*(\mu, \cdot) \in W^{1,p(\mu)}(\Omega)$ be a weak solution of \eqref{para p-laplacian} corresponding to a given parameter $\mu \in \calP$. Then, for the neural network $v = v_\theta \in \mathcal{N}_\Theta$ satisfying Assumption \ref{para_small assumption}, we have the following:
    \begin{itemize}
        \item [(1)] {If $p^->2$}, then for each $\mu \in \mathcal{P}$, there exists $C_p>0$, depending continuously on $p$, such that
        \begin{align*}
        &\big\|R_\mu(v)\big\|_{W^{-1,p'(\mu)}(\Omega)} + \big\|B_\mu(v)\big\|_{W^{1-\frac{1}{p(\mu)},p(\mu)}(\pa \Omega)}\le C_p  \|v(\mu, \cdot)-u^*(\mu, \cdot)\|_{W^{1,p(\mu)}(\Omega)} .
    \end{align*}    
        \item [(2)] {On the other hand, if $p^+<2$}, then for each $\mu \in \mathcal{P}$, there exists $C_p>0$, depending continuously on $p$, such that
    \begin{equation*}
        \big\|R_\mu(v)\big\|_{W^{-1,p'(\mu)}(\Omega)} ^\frac{1}{p(\mu)-1} + \big\|B_\mu(v)\big\|_{W^{1-\frac{1}{p(\mu)},p(\mu)}(\pa \Omega)} \le C_p  \|v(\mu, \cdot)-u^*(\mu, \cdot)\|_{W^{1,p(\mu)}(\Omega)} .
    \end{equation*}
    \end{itemize}

\end{proposition}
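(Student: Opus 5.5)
The plan is to fix $\mu\in\calP$ and reduce to the proof of Proposition~\ref{prop:priori}, with exponent $p=p(\mu)$, data $f(\mu,\cdot)$, $g(\mu,\cdot)$ and solution $u^*(\mu,\cdot)$. Since $p^->2$ (resp.\ $p^+<2$) gives $p(\mu)>2$ (resp.\ $p(\mu)<2$) for every $\mu$, the case split in Proposition~\ref{prop:priori} applies pointwise in $\mu$. The estimate does not depend on any other parameter, so it will hold for each $\mu$ with a constant $C_{p(\mu)}$. Because each constant depends continuously on $p$ and $p(\mu)\in[p^-,p^+]$, these constants are moreover bounded uniformly in $\mu$.

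\textbf{Boundary residual.} Since $\gamma_\mu u^*(\mu,\cdot)=g(\mu,\cdot)$, the trace theorem (Theorem~\ref{thm:trace}) with exponent $p(\mu)$ gives
\[
\big\|B_\mu(v)\big\|_{W^{1-\frac{1}{p(\mu)},p(\mu)}(\pa\Omega)}=\|\gamma_\mu(v(\mu,\cdot)-u^*(\mu,\cdot))\|\le C_{p(\mu)}\|v(\mu,\cdot)-u^*(\mu,\cdot)\|_{W^{1,p(\mu)}(\Omega)}.
\]

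\textbf{PDE residual.} The weak solution satisfies $R_\mu(u^*)=0$. Testing $R_\mu(v)-R_\mu(u^*)$ against $\varphi\in W^{1,p(\mu)}_0(\Omega)$ and applying H\"older's inequality bounds the dual norm by $\big\||\nabla_x v|^{p(\mu)-2}\nabla_x v-|\nabla_x u^*|^{p(\mu)-2}\nabla_x u^*\big\|_{L^{p'(\mu)}(\Omega)}$.
\begin{itemize}
\item[(1)] For $p(\mu)>2$, I use the first inequality of Lemma~\ref{alg_lem}, then H\"older's inequality with exponents $\frac{p-1}{p-2}$ and $p-1$ (after raising to $p'$). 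This gives the bound $C\,\||\nabla v|+|\nabla u^*|\|_{L^p}^{p-2}\|v-u^*\|_{W^{1,p}}$. Splitting $\nabla v=\nabla(v-u^*)+\nabla u^*$, Proposition~\ref{para: lp_est} together with \eqref{para: basic setup} bounds $\|\nabla_x u^*(\mu,\cdot)\|_{L^{p(\mu)}}$ uniformly in $\mu$. Assumption~\ref{para_small assumption} gives $\|v-u^*\|\le1$. Together these yield $C(\|v-u^*\|^{p-1}+\|v-u^*\|)\le C\|v-u^*\|$.
\item[(2)] For $p(\mu)<2$, the last inequality of Lemma~\ref{alg_lem} gives the bound $C\|\nabla(v-u^*)\|_{L^{p}}^{p-1}$. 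Raising to the power $\frac1{p(\mu)-1}$ then gives the claimed linear bound.
\end{itemize}
Adding the boundary and PDE estimates concludes both cases.

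The main point needing care is the uniformity of the constants. The bound on $\|\nabla_x u^*(\mu,\cdot)\|$ must not blow up in $\mu$; this follows from the suprema in \eqref{para: basic setup} and the continuity of $C_p$ on the compact interval $[p^-,p^+]$. The computation itself is identical to Proposition~\ref{prop:priori}.
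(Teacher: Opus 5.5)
Your proposal is correct and follows the paper's route. The paper applies the argument of Proposition~\ref{prop:priori} for each fixed $\mu$ (trace theorem for $B_\mu$, Lemma~\ref{alg_lem} with H\"older for $R_\mu$) and uses Proposition~\ref{para: lp_est} to control $\|\nabla_x u^*(\mu,\cdot)\|_{L^{p(\mu)}(\Omega)}$; your explicit use of the suprema in \eqref{para: basic setup} to make that bound uniform in $\mu$ is a detail the paper leaves implicit.
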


\begin{proof}
The result follows directly from the arguments in the proof of Proposition \ref{prop:priori} together with Proposition \ref{para: lp_est}.
\end{proof}

Building upon the preceding estimates, we now derive the second main result of this section: a C\'ea-type estimate for the parametric problem. To this end, we introduce the optimization error $\delta(v_\theta):= \mathcal{L}(v_\theta) - \inf_{v_\psi \in \mathcal{N}_\Theta} \mathcal{L}(v_\psi)$, which is assumed to satisfy $\delta(v_\theta) \le 1$ inside the convergence regime, analogous to our analysis in the previous section. A C\'ea-type lemma for the parametric problem is formulated as follows.

\begin{theorem} [C\'ea's lemma] \label{thm:para_priori}
     Assume that \eqref{para: basic setup} holds, and let 
     $u^*(\mu, \cdot) \in W^{1,p(\mu)}(\Omega)$ be a weak solution of \eqref{para p-laplacian} corresponding to a given parameter $\mu \in \calP$. Then, for the neural network $v_\theta \in \mathcal{N}_\Theta$ satisfying Assumption \ref{para_small assumption}, we have the following: 
     \begin{itemize}
        \item[(1)] For $p^->2$, there exists a constant $C>0$ such that
        \begin{equation*}
        \begin{split}
            \int_\mathcal{P} \|v_\theta(\mu,\cdot)&-u^*(\mu,\cdot)\|_{W^{1,p(\mu)}(\Omega)} \dmu \\
            &\le C \left(\delta(v_\theta) + \inf_{v_\psi \in \mathcal{N}_\Theta} \int_\calP \| v_\psi(\mu, \cdot) - u^*(\mu, \cdot) \|_{W^{1,p(\mu)}(\Omega)}\dmu\right)^\frac{1}{p^+-1}.
        \end{split}
        \end{equation*}
        \item[(2)] For $p^+<2$, there exists a constant $C>0$ such that
        \begin{equation*}
        \begin{split}
             \int_\mathcal{P} \|v_\theta(\mu,\cdot)&-u^*(\mu,\cdot)\|_{W^{1,p(\mu)}(\Omega)} \dmu \\ 
             & \le C  \left(\delta(v_\theta) + \inf_{v_\psi \in \mathcal{N}_\Theta} \int_\calP \| v_\psi(\mu, \cdot) - u^*(\mu, \cdot) \|_{W^{1,p(\mu)}(\Omega)}\dmu\right)^{p^--1}.
        \end{split}
        \end{equation*}
    \end{itemize}
\end{theorem}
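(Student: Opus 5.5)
The plan mirrors the non-parametric proof of Theorem~\ref{thm:priori}: first apply the parametric \textit{a posteriori} estimate just proved, then bound the loss of the best network by its error using Proposition~\ref{prop:para_priori}.

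\textbf{Step 1 (a posteriori bound and splitting of the loss).} For $p^->2$ the preceding \textit{a posteriori} theorem gives
\[
\int_\calP \|v_\theta(\mu,\cdot)-u^*(\mu,\cdot)\|_{W^{1,p(\mu)}(\Omega)}\dmu \le C\,\calL(\theta)^{\frac{1}{p^+-1}}.
\]
I then write
\[
\calL(v_\theta)=\delta(v_\theta)+\inf_{v_\psi\in\NN_\Theta}\calL(v_\psi).
\]
It remains to bound $\inf_{v_\psi}\calL(v_\psi)$ by $C\inf_{v_\psi}\int_\calP\|v_\psi(\mu,\cdot)-u^*(\mu,\cdot)\|_{W^{1,p(\mu)}(\Omega)}\dmu$. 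Since $p(\mu)>2$, we have $\alpha(\mu)=1$. Proposition~\ref{prop:para_priori}(1) applied for each $\mu$ gives
\[
\|R_\mu(v_\psi)\|_{W^{-1,p'(\mu)}(\Omega)}+\|B_\mu(v_\psi)\|_{W^{1-\frac{1}{p(\mu)},p(\mu)}(\pa\Omega)}\le C_{p(\mu)}\|v_\psi(\mu,\cdot)-u^*(\mu,\cdot)\|_{W^{1,p(\mu)}(\Omega)}.
\]
The constant $C_{p(\mu)}$ depends continuously on $p$, and $p(\mu)\in[p^-,p^+]$, so it is bounded by a single $C$. Integrating over $\calP$ yields $\calL(v_\psi)\le C\int_\calP\|v_\psi(\mu,\cdot)-u^*(\mu,\cdot)\|_{W^{1,p(\mu)}(\Omega)}\dmu$. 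Taking the infimum over $v_\psi$ and using that $t\mapsto t^{1/(p^+-1)}$ is monotone gives case (1).

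\textbf{Step 2 (case $p^+<2$).} Here the \textit{a posteriori} theorem gives the exponent $p^--1$, and $\alpha(\mu)=\frac{1}{p(\mu)-1}$. Proposition~\ref{prop:para_priori}(2) bounds
\[
\|R_\mu(v_\psi)\|^{\frac{1}{p(\mu)-1}}_{W^{-1,p'(\mu)}(\Omega)}+\|B_\mu(v_\psi)\|_{W^{1-\frac{1}{p(\mu)},p(\mu)}(\pa\Omega)}
\]
by $C\|v_\psi(\mu,\cdot)-u^*(\mu,\cdot)\|_{W^{1,p(\mu)}(\Omega)}$. This is exactly the integrand of $\calL(v_\psi)$. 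Integrating and taking the infimum concludes as before.

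\textbf{Main obstacle.} The delicate point is that Proposition~\ref{prop:para_priori} requires Assumption~\ref{para_small assumption} for the competitor $v_\psi$, not just for $v_\theta$. I would handle this by restricting the infimum to networks satisfying the assumption. As in Theorem~\ref{thm:priori}, this is implicit in working in the near-convergence regime: a near-optimal competitor has small error, so the assumption holds for it. I will state this restriction explicitly.

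A second point is measurability in $\mu$ of the integrands, which is needed to integrate the pointwise bounds. I would take this as implicit in the definition of $\calL$. The uniformity of the constants in $\mu$ comes from continuity of $C_p$ on the compact interval $[p^-,p^+]$.
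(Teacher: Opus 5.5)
This is the paper's argument: its one-line proof combines the \textit{a posteriori} bound for $v_\theta$, the splitting $\calL(v_\theta)=\delta(v_\theta)+\inf_{v_\psi\in\NN_\Theta}\calL(v_\psi)$, and a $\mu$-uniform application of Proposition~\ref{prop:para_priori} to the competitors $v_\psi$, and it tacitly assumes that they satisfy Assumption~\ref{para_small assumption}, which is exactly the point you flag. That step is harmless in case (2), where Proposition~\ref{prop:para_priori} needs no smallness, but in case (1) your claim that near-optimal competitors satisfy it does not follow (the assumption is pointwise in $\mu$, while the infimum is of $\int_\calP\|v_\psi(\mu,\cdot)-u^*(\mu,\cdot)\|_{W^{1,p(\mu)}(\Omega)}\dmu$), so impose a uniform-in-$\mu$ error bound on the competitors as an explicit hypothesis, noting that restricting the infimum as you propose gives a slightly weaker inequality than the one stated.
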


\begin{proof}
    The result follows directly from the Proposition \ref{prop:para_posteriori} together with Proposition \ref{prop:para_priori}.

\end{proof}

Equipped with the C\'ea-type estimate, we deduce an \textit{a priori} error estimate together with the neural network approximation theorem (Theorem~\ref{UAT_quan_1}), which establishes the convergence rate for solutions possessing higher regularity.

\begin{corollary}[\textit{A priori} estimate]
    Assume that \eqref{para: basic setup} holds and let $u^*(\mu,\cdot) \in W^{1,p(\mu)}(\Omega)$ be a weak solution of \eqref{para p-laplacian} corresponding to a given parameter $\mu \in \calP$. Suppose further that a weak solution $u^*$ has higher regularity, namely $u^* \in W^{k,p^+}(\calP \times \Omega)$ for some $k>1$. Then, for each $n \in \mathbb{N}$, there exist a parameter space $\Theta_n$ of dimension $\mathcal{O}(n)$, a feed-forward $\tanh$ neural network $v_{\theta_n} \in \NN_{\Theta_n}$, and a constant $C>0$, such that for arbitrarily small $\tau>0$, there holds
    \[
    \begin{split}
    \int_\mathcal{P} \|v_{\theta_n}(\mu, \cdot)&-u^*(\mu, \cdot)\|_{W^{1,p(\mu)}(\Omega)} \dmu  \\
    & \le \begin{cases}
    \displaystyle C\left(\delta(v_{\theta_n}) + \left(\frac{1}{n} \right)^{\frac{k-1-\tau}{d_\calP +d_\Omega}} \|u^*\|_{W^{k,p^+}(\calP \times \Omega)}\right)^\frac{1}{p^+-1} & \text{if } 2<p^-, \\
    \displaystyle C\left(\delta(v_{\theta_n}) + \left(\frac{1}{n} \right)^{\frac{k-1-\tau}{d_\calP+d_\Omega}} \|u^*\|_{W^{k,p^+}(\calP \times \Omega)}\right)^{p^--1} & \text{if } p^+<2.
    \end{cases}
    \end{split}
    \]
\end{corollary}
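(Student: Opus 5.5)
The plan is to follow the non-parametric Corollary~\ref{conv rate} and plug an approximation result into the parametric C\'ea lemma, Theorem~\ref{thm:para_priori}.

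The first step treats $(\mu,x)$ as a single input variable on the product domain $\calP\times\Omega\subset\mathbb{R}^{d_\calP+d_\Omega}$. We assume, or arrange by a harmless enlargement, that this domain is bounded and Lipschitz. We then apply Theorem~\ref{UAT_quan_1} with $m=1$ and exponent $p^+$ to $u^*\in W^{k,p^+}(\calP\times\Omega)$. This produces a $\tanh$ network $v_{\theta_n}$ with $\mathcal{O}(n)$ parameters satisfying
\[
\|v_{\theta_n}-u^*\|_{W^{1,p^+}(\calP\times\Omega)}\le C\left(\tfrac1n\right)^{\frac{k-1-\tau}{d_\calP+d_\Omega}}\|u^*\|_{W^{k,p^+}(\calP\times\Omega)}.
\]
The network $v_{\theta_n}$ only serves as an upper bound for the infimum over $\NN_{\Theta_n}$. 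The network to which the corollary's estimate applies is the trained one, and its optimization error is what $\delta$ records.

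The second step, which I expect to be the main technical point, converts this product-space bound into a bound on $\int_\calP\|v_{\theta_n}(\mu,\cdot)-u^*(\mu,\cdot)\|_{W^{1,p(\mu)}(\Omega)}\dmu$. For each $\mu$ we have $p(\mu)\le p^+$ and $\Omega$ is bounded, so H\"older's inequality gives
\[
\|w\|_{W^{1,p(\mu)}(\Omega)}\le C\,\|w\|_{W^{1,p^+}(\Omega)},
\]
with a constant depending only on $|\Omega|$, $p^-$ and $p^+$, hence uniform in $\mu$. Next, Fubini's theorem together with H\"older's inequality on the bounded set $\calP$ gives
\[
\int_\calP\|w(\mu,\cdot)\|_{W^{1,p^+}(\Omega)}\dmu\le |\calP|^{1/(p^+)'}\Bigl(\int_\calP\|w(\mu,\cdot)\|^{p^+}_{W^{1,p^+}(\Omega)}\dmu\Bigr)^{1/p^+}.
\]
The last quantity is at most $\|w\|_{W^{1,p^+}(\calP\times\Omega)}$, because the full $W^{1,p^+}$ norm on the product also contains the $\mu$-derivatives, which only enlarge it. Applying this with $w=v_{\theta_n}-u^*$, we conclude that
\[
\inf_{v_\psi\in\NN_{\Theta_n}}\int_\calP\|v_\psi(\mu,\cdot)-u^*(\mu,\cdot)\|_{W^{1,p(\mu)}(\Omega)}\dmu\le C\left(\tfrac1n\right)^{\frac{k-1-\tau}{d_\calP+d_\Omega}}\|u^*\|_{W^{k,p^+}(\calP\times\Omega)}.
\]
A remaining subtlety is measurability of $\mu\mapsto\|w(\mu,\cdot)\|_{W^{1,p(\mu)}}$. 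We treat it as implicitly assumed, just as the definition of $\calL$ already assumes it.

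The final step substitutes this bound into Theorem~\ref{thm:para_priori}. The map $t\mapsto t^{1/(p^+-1)}$ for $p^->2$, and the map $t\mapsto t^{p^--1}$ for $p^+<2$, are both monotone increasing. Replacing the infimum by the larger quantity above therefore yields the two stated cases, with all constants absorbed into $C$.
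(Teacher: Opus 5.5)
Your proposal is correct and follows the paper's proof essentially step for step. You approximate $u^*$ on the product domain via Theorem~\ref{UAT_quan_1}, pass from $W^{1,p(\mu)}(\Omega)$ to $W^{1,p^+}(\Omega)$ and then to the full $W^{1,p^+}(\calP\times\Omega)$ norm by H\"older's inequality in $x$ and in $\mu$, and insert the resulting bound on the infimum into Theorem~\ref{thm:para_priori}. Your caveats about needing a Lipschitz product domain and about measurability in $\mu$ are points the paper leaves implicit, not gaps in your argument; note that an ``enlargement'' of the domain would itself require extending $u^*$.
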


\begin{proof}
    First, H\"older's inequality and Theorem \ref{UAT_quan_1} imply that
    \[
    \begin{split}
    \int_\calP \|v_\psi(\mu,\cdot)-u^*(\mu,\cdot)\|_{W^{1,p(\mu)}(\Omega)}\dmu &\le C \int_\calP\|v_\psi(\mu,\cdot)-u^*(\mu,\cdot)\|_{W^{1,p^+}(\Omega)}\dmu \\
    &\le C \left(\int_\calP\|v_\psi(\mu,\cdot)-u^*(\mu,\cdot)\|_{W^{1,p^+}(\Omega)}^{p^+}\dmu\right)^\frac{1}{p^+} \\
    &\le C \| v_\psi - u^*\|_{W^{1,p^+}(\calP \times \Omega)} \\
    & \le C \left(\frac{1}{n}\right)^\frac{k-1-\tau}{d_\calP + d_\Omega} \|u^*\|_{W^{k,p^+}(\calP \times \Omega)}.
    \end{split}
    \]
    Combining this inequality with the estimate in Theorem \ref{thm:para_priori} yields the desired result.
\end{proof}


\begin{remark}
To evaluate the parametric loss function numerically, we compute the residual norms $\big\|R_\mu(v_\theta)\big\|_{W^{-1,p'(\mu)}(\Omega)}$ and $\big\|B_\mu(v_\theta)\big\|_{W^{1-\frac{1}{p(\mu)},p(\mu)}(\partial \Omega)}$ in the same manner as in Section \ref{sec:err_est} (see Remark \ref{VPINN compute}). Furthermore, the integration with respect to the parametric variables $\mu\in\mathcal{P}$ is evaluated via Monte Carlo integration with uniform random sampling.
\end{remark}

\section{Numerical experiments}\label{sec:num_exp}
In this section, we present numerical experiments that support the theoretical analysis developed in the preceding sections. The experiments consist of three parts. First, we compare the proposed $W^{-1,p'}(\Omega)$ dual norm formulation with the standard $L^2$ residual loss, demonstrating 
the necessity and stability of the former. Second, we examine the effect of imposing the boundary condition in the fractional Sobolev trace norm rather than in the $L^2$ norm. Finally, we provide the experimental results for the proposed framework in parametric settings.

A central ingredient of the proposed framework is the dual norm formulation of the PDE residual. However, the direct evaluation of the dual norm is computationally intractable, since 
it involves a supremum over an infinite-dimensional space. For this reason, as discussed in Remark~\ref{VPINN compute}, we employ the VPINN loss \eqref{VPINN loss} to 
evaluate the residual components throughout the experiments. Crucial to this implementation is the 
choice of the basis functions 
$\{\phi_k\}_{k=1}^\infty \subset W^{1,p}_0(\Omega)$. On the computational 
domain $\Omega = (-1,1)$, we employ one of the following two families of 
basis functions:
\begin{itemize}
\item \textbf{Legendre-type basis functions:}
\begin{equation} \label{Legendre type basis}
\phi_k(x) = P_{k+2}(x) - P_k(x),
\end{equation}
where $P_k(x)$ denotes the $k$-th Legendre polynomial. This construction 
ensures that each basis function vanishes on $\partial\Omega$, owing to 
the property $P_{k+2}(\pm 1) = P_k(\pm 1)$.
\item \textbf{Sine basis functions:}
\begin{equation} \label{Sine basis}
\phi_k(x) = \sin\left(k\pi\, \frac{x+1}{2}\right),
\end{equation}
which likewise satisfy the homogeneous boundary condition 
$\phi_k(\pm 1) = 0$.
\end{itemize}

\begin{figure}[t] 
    \centering
    \includegraphics[width=1\textwidth]{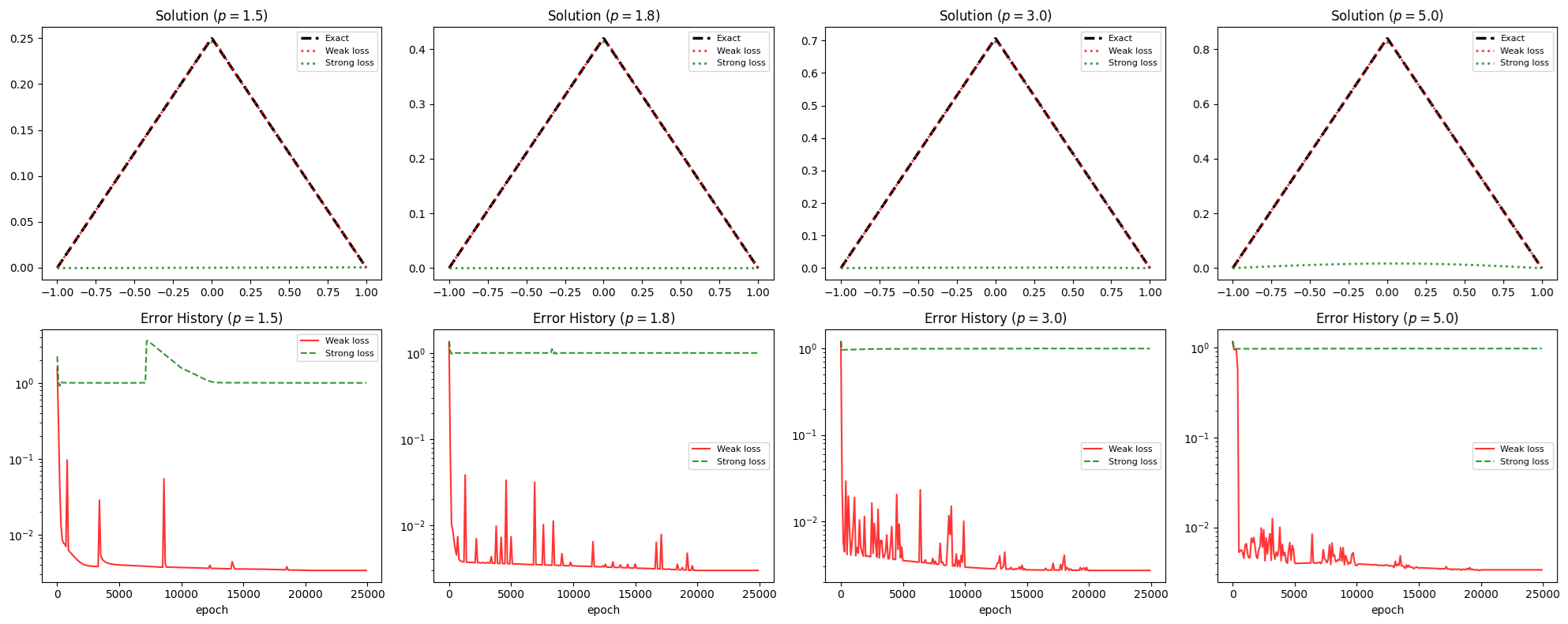}
    \caption{Comparison of weak and strong loss formulations for Example~\ref{ex:dirac}. Top row: exact (black), weak loss-based (red), and strong loss-based (green) solutions for various $p$. Bottom row: corresponding relative $L^2$ error histories during training.}
    \label{weak vs strong (dirac)}
\end{figure}

\begin{table}[t]
\centering
\small
\begin{minipage}[t]{0.495\textwidth}
\centering
\setlength{\tabcolsep}{3.5pt}
\begin{tabular}{ccc}
\toprule
$p$ & Weak error (Average) & Strong error (Average) \\
\midrule
2.05 & $2.8127 \times 10^{-3}$ & $1.0000 \times 10^{0}$ \\
3.00 & $3.0004 \times 10^{-3}$ & $1.0011 \times 10^{0}$ \\
4.00 & $3.3814 \times 10^{-3}$ & $9.9706 \times 10^{-1}$ \\
5.00 & $4.8305 \times 10^{-3}$ & $9.9374 \times 10^{-1}$ \\
\bottomrule
\end{tabular}
\end{minipage}
\hfill
\begin{minipage}[t]{0.495\textwidth}
\centering
\setlength{\tabcolsep}{3.5pt}
\begin{tabular}{ccc}
\toprule
$p$ & Weak error (Average) & Strong error (Average) \\
\midrule
1.95 & $2.8461 \times 10^{-3}$ & $1.0000 \times 10^{0}$ \\
1.80 & $3.0649 \times 10^{-3}$ & $1.0000 \times 10^{0}$ \\
1.65 & $3.3446 \times 10^{-3}$ & $1.0000 \times 10^{0}$ \\
1.50 & $3.9188 \times 10^{-3}$ & $1.0000 \times 10^{0}$ \\
\bottomrule
\end{tabular}
\end{minipage}
\caption{Average relative $L^2$ errors of the weak and strong loss formulations for Example~\ref{ex:dirac}, averaged over five independent trials, for $p > 2$ (left) and $p < 2$ (right).}
\label{tab:Dirac}
\end{table}

\subsection{Numerical validation of dual norm}

To validate the theoretical motivations of the $W^{-1,p'}(\Omega)$ dual norm formulation introduced in Sections~\ref{sec:cons} and \ref{sec:err_est}, we conduct numerical experiments comparing the proposed weak residual loss against the standard strong $L^2$ residual loss. Specifically, we consider two problems introduced in Section~\ref{sec:cons}, representing distinct analytical challenges. First, we examine a singular source with a Dirac measure (Example~\ref{ex:dirac}), where $f = \delta_0 \notin L^2(\Omega)$. This example demonstrates the necessity of the weak formulation when strong residual losses are ill-defined. To compute the strong loss for comparison, we set $f=0$, which is justified by the fact that $f=0$ a.e. on $\Omega$ and a randomly sampled point hits $x=0$ with probability zero. Second, we consider a regular source problem (Example~\ref{setup:counter_example}) with $f = -1$. Although the source is smooth, strong $L^2$ residual minimization remains unstable. Specifically, driving the strong residual to zero forces the curvature to blow up ($\|v''\|_{L^\infty} \to \infty$) for $p>2$ as characterized by Theorem~\ref{thm:counter}, while inducing numerical instability for $1<p<2$.

\paragraph{Implementation Details.} Both experiments employ an identical network architecture and optimization scheme. The solution $u(x)$ is approximated by a feed-forward neural network with $3$ hidden layers, $50$ neurons per layer, and $\tanh$ activation functions. Training is conducted in two sequential phases: an initial $20,000$ iterations using the Adam optimizer with a learning rate of $0.001$, halved every $5,000$ iterations via a step decay scheduler, followed by $5,000$ fine-tuning iterations using the L-BFGS optimizer. For the weak loss formulation \eqref{VPINN loss}, we employ $20$ Legendre-type basis functions \eqref{Legendre type basis}. Integrals in the weak loss are computed via Gaussian quadrature by dividing the domain into $10$ uniform subintervals with $10$ quadrature points per subinterval.

The qualitative solution profiles and error histories of the first and second experiments for $p=1.5, 1.8, 3, 5$ are illustrated in Figures~\ref{weak vs strong (dirac)} and \ref{weak vs strong}, respectively. Furthermore, quantitative comparisons of average relative $L^2$ errors over five independent trials, for both $p > 2$ ($p = 2.05, 3, 4, 5$) and $p < 2$ ($p = 1.95, 1.8, 1.65, 1.5$), are summarized in Table~\ref{tab:Dirac} for the first experiment and Table~\ref{tab:ex2} for the second experiment, respectively.

\begin{figure}[t] 
    \centering
    \includegraphics[width=1\textwidth]{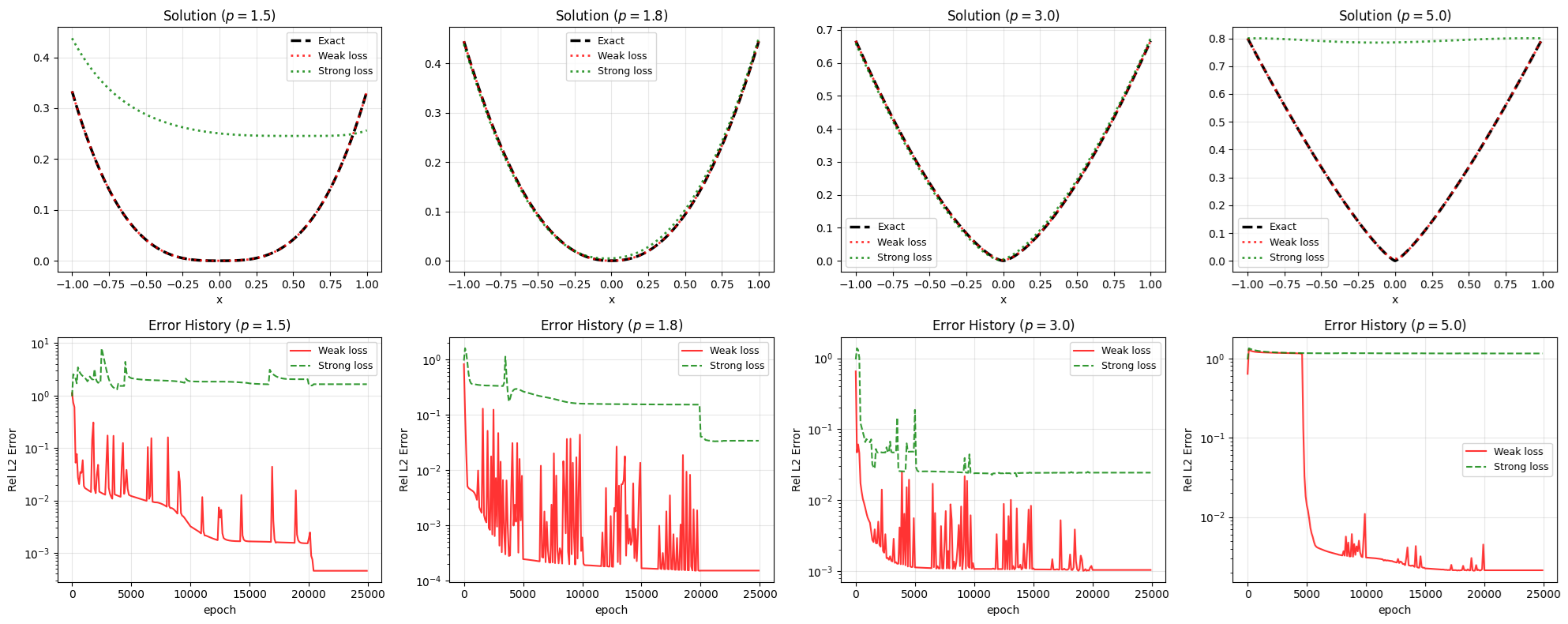}
    \caption{Comparison of weak and strong loss formulations for Example~\ref{setup:counter_example}. Top row: exact (black), weak loss-based (red), and strong loss-based (green) solutions for various $p$. Bottom row: corresponding relative $L^2$ error histories during training.}
    \label{weak vs strong}
\end{figure}

\begin{table}[t]
\centering
\small
\begin{minipage}[t]{0.495\textwidth}
\centering
\setlength{\tabcolsep}{3.5pt} 
\begin{tabular}{ccc}
\toprule
$p$ & Weak error (Average) & Strong error (Average) \\
\midrule
2.05 & $1.1288 \times 10^{-4}$ & $7.0804 \times 10^{-3}$ \\
3.00 & $9.9115 \times 10^{-4}$ & $6.0813 \times 10^{-2}$ \\
4.00 & $1.8585 \times 10^{-3}$ & $3.8912 \times 10^{-2}$ \\
5.00 & $3.1378 \times 10^{-3}$ & $1.1629 \times 10^{0}$ \\
\bottomrule
\end{tabular}
\end{minipage}
\hfill 
\begin{minipage}[t]{0.495\textwidth}
\centering
\setlength{\tabcolsep}{3.5pt}
\begin{tabular}{ccc}
\toprule
$p$ & Weak error (Average) & Strong error (Average) \\
\midrule
1.95 & $9.0174 \times 10^{-5}$ & $5.4085 \times 10^{-3}$ \\
1.80 & $1.3242 \times 10^{-4}$ & $3.7132 \times 10^{-2}$ \\
1.65 & $1.9481 \times 10^{-4}$ & $5.4740 \times 10^{-2}$ \\
1.50 & $1.9815 \times 10^{-4}$ & $3.7820 \times 10^{-1}$ \\
\bottomrule
\end{tabular}
\end{minipage}
\caption{Average relative $L^2$ errors of the weak and strong loss formulations for Example~\ref{setup:counter_example}, averaged over five independent trials, for $p > 2$ (left) and $p < 2$ (right).}
\label{tab:ex2}
\end{table}

\paragraph{Discussions.} Based on these numerical results, we make the following observations. First, regarding Example~\ref{ex:dirac}, the strong loss fails to yield any meaningful training progress, which is consistent with the theoretical fact that $f \notin L^2(\Omega)$, whereas the proposed weak loss stably achieves accurate approximations. Second, the experiments for Example~\ref{setup:counter_example} demonstrate that the weak loss formulation consistently outperforms the strong loss across all tested values of $p$; in particular, under highly nonlinear regimes (e.g., $p=1.5$ and $p=5$), the strong formulation fails to train, whereas the weak loss maintains robust performance. Finally, Theorem~\ref{thm:priori} predicts that the theoretical upper bound grows larger as $p$ increases for $p > 2$ and as $p$ decreases for $p < 2$. As shown in Tables~\ref{tab:Dirac} and \ref{tab:ex2}, the empirical approximation errors align with this prediction, validating our error bounds for both experiments.


\subsection{Numerical validation of fractional boundary loss}

To provide numerical justification for the fractional boundary norm discussed in Section~\ref{sec:err_est}, we evaluate the impact of the boundary loss formulation on model performance. Recall that for a solution $u \in W^{1,p}(\Omega)$, its natural trace resides in the fractional Sobolev space $\pb$. Based on this theoretical requirement, we consider the manufactured solution $u(x,y) = (1-x^2)(1-y^2)$ on $\Omega = (-1,1) \times (-1,1)$. To examine the effect of the boundary formulation, we compare two setups, one utilizing the fractional Sobolev trace norm $\pb$ and the other using the standard $L^2(\partial\Omega)$ norm, while maintaining a consistent $W^{-1,p'}(\Omega)$ interior loss.

\paragraph{Implementation Details.} Both loss formulations share identical interior loss evaluations and neural network configurations. The solution $u(x,y)$ is approximated by a feed-forward neural network with $3$ hidden layers, $50$ neurons per layer, and Swish activation functions. Training is conducted using the Adam optimizer for $40,000$ iterations, with an initial learning rate of $0.001$ halved every $5,000$ iterations via a step decay scheduler. The interior $W^{-1,p'}(\Omega)$ norm is evaluated via \eqref{VPINN loss} using $100$ Sine basis functions \eqref{Sine basis} ($10$ basis functions per spatial direction). Interior integrals are computed via Gaussian quadrature on an $8 \times 8$ uniform grid, employing $8 \times 8$ quadrature points per grid cell. For the boundary components, the fractional Sobolev norm is evaluated via the Monte Carlo approach based on $400$ uniform random samples along $\partial\Omega$.

The qualitative solution profiles and error histories for $p=1.5, 1.8, 3, 5$ are illustrated in Figure~\ref{fractional vs L2}. Furthermore, quantitative comparisons of average relative $L^2$ errors over five independent trials, for both $p > 2$ ($p = 2.05, 3, 4, 5$) and $p < 2$ ($p = 1.95, 1.8, 1.65, 1.5$), are summarized in Table~\ref{tab:fracvsl2}.

\begin{figure}[t]
    \centering
    \includegraphics[width=1\textwidth]{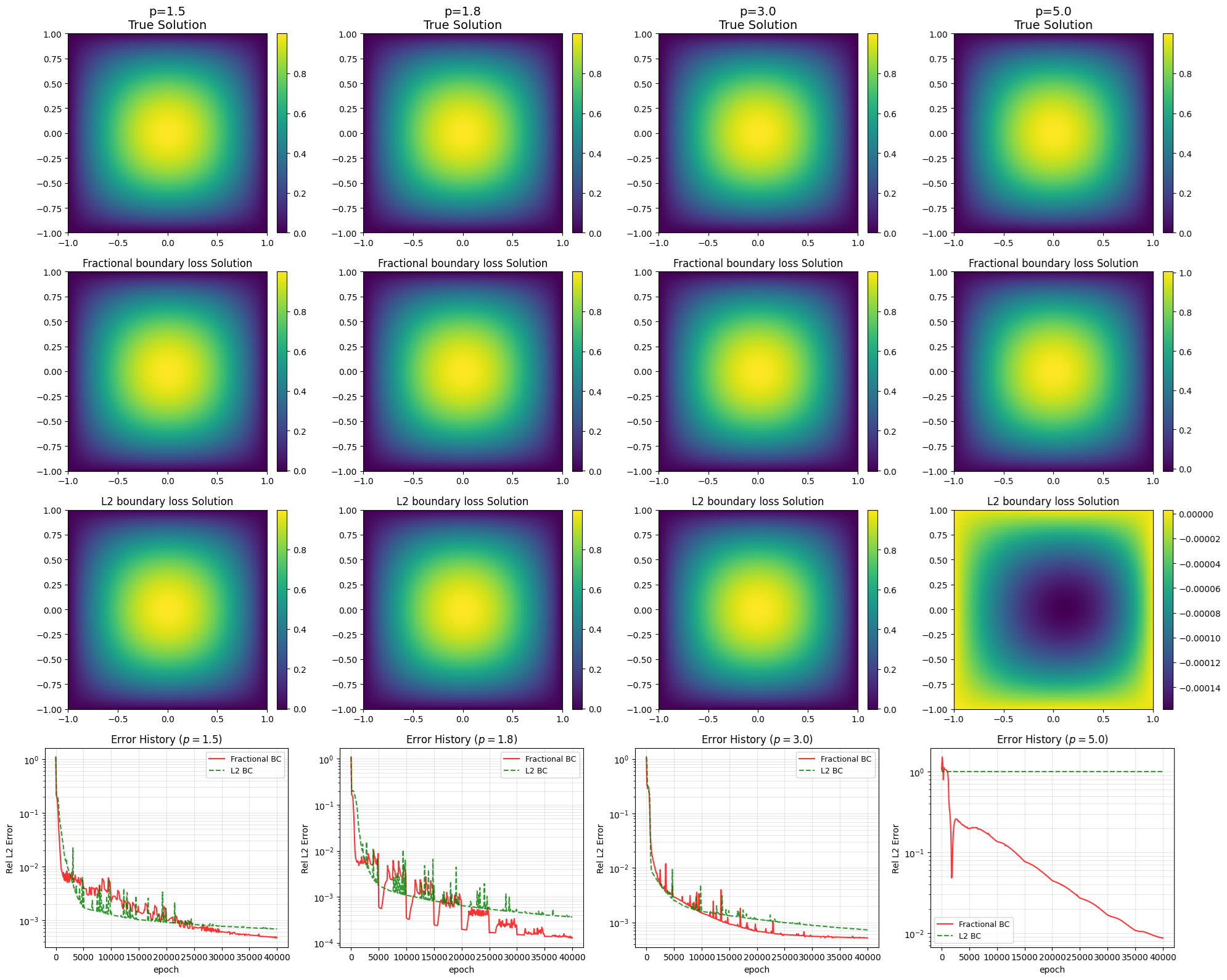}
    \caption{Comparison of models trained with the fractional and $L^2$ boundary losses for various values of $p$. From top to bottom: exact solutions, solutions obtained with the fractional boundary loss, solutions obtained with the $L^2$ boundary loss, and relative $L^2$ error histories, where the red and green lines correspond to the fractional and $L^2$ boundary losses, respectively.}
    \label{fractional vs L2}
\end{figure}

\begin{table}[t]
\centering
\small
\begin{minipage}[t]{0.495\textwidth}
\centering
\setlength{\tabcolsep}{3.5pt}
\begin{tabular}{ccc}
\toprule
$p$ & Frac error (Average) & $L^2$ error (Average) \\
\midrule
2.05 & $1.5490 \times 10^{-4}$ & $3.9831 \times 10^{-4}$ \\
3.00 & $5.7732 \times 10^{-4}$ & $5.1330 \times 10^{-4}$ \\
4.00 & $2.3464 \times 10^{-3}$ & $3.7594 \times 10^{-3}$ \\
5.00 & $3.1050 \times 10^{-2}$ & $9.9997 \times 10^{-1}$ \\
\bottomrule
\end{tabular}
\end{minipage}
\hfill
\begin{minipage}[t]{0.495\textwidth}
\centering
\setlength{\tabcolsep}{3.5pt}
\begin{tabular}{ccc}
\toprule
$p$ & Frac error (Average) & $L^2$ error (Average) \\
\midrule
1.95 & $9.0809 \times 10^{-5}$ & $2.5694 \times 10^{-4}$ \\
1.80 & $1.0237 \times 10^{-4}$ & $2.4693 \times 10^{-4}$ \\
1.65 & $1.5207 \times 10^{-4}$ & $3.8910 \times 10^{-4}$ \\
1.50 & $4.0264 \times 10^{-4}$ & $4.3689 \times 10^{-4}$ \\
\bottomrule
\end{tabular}
\end{minipage}
\caption{Average relative $L^2$ errors of the fractional and $L^2$ boundary loss formulations, averaged over five independent trials, for $p > 2$ (left) and $p < 2$ (right).}
\label{tab:fracvsl2}
\end{table}

\paragraph{Discussions.} As observed in Table~\ref{tab:fracvsl2}, employing the fractional boundary norm yields lower approximation errors across nearly all tested values of $p$. Specifically, for $p=5$, the standard $L^2$ boundary loss fails to train the model, whereas the fractional loss remains robust. This performance disparity is directly attributed to the underlying Sobolev embedding $W^{1-\frac{1}{p},p}(\partial\Omega) \hookrightarrow L^2(\partial\Omega)$ valid for $p > \frac{2d}{d+1}$ (where $d$ is the dimension of $\Omega$), which establishes that $\|\cdot \|_{L^2(\partial\Omega)} \lesssim \|\cdot\|_{\pb}$. Consequently, the fractional Sobolev norm serves as a stricter criterion that effectively controls the $L^2$ loss, although the standard $L^2$ boundary loss may still structurally suffice for moderate regimes such as $p=3$. Furthermore, we observe that the empirical errors in Table~\ref{tab:fracvsl2} align with Theorem~\ref{thm:priori}, where the theoretical upper bound grows larger as $p$ increases for $p > 2$ and as $p$ decreases for $p < 2$.

\subsection{Numerical experiments on parametric problems}

\begin{figure}[t] 
    \centering
    \includegraphics[width=1\textwidth]{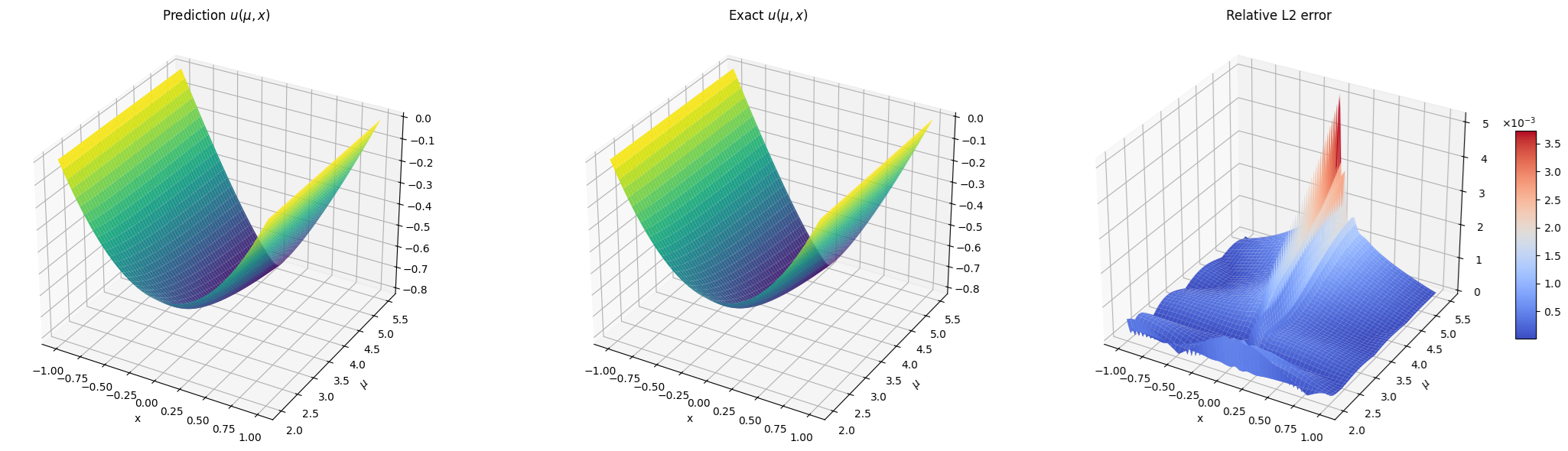}
    \caption{Comparison between the predicted and exact solutions for the parametric-exponent problem with $p(\mu)>2$. The left and center plots display the surfaces of the predicted and exact solutions for $u(\mu, x)$, respectively, while the right plot shows the relative $L^2$ error across the spatial and parameter domains.}
    \label{para_weak}
\end{figure}

\begin{figure}[t] 
    \centering
    \includegraphics[width=1\textwidth]{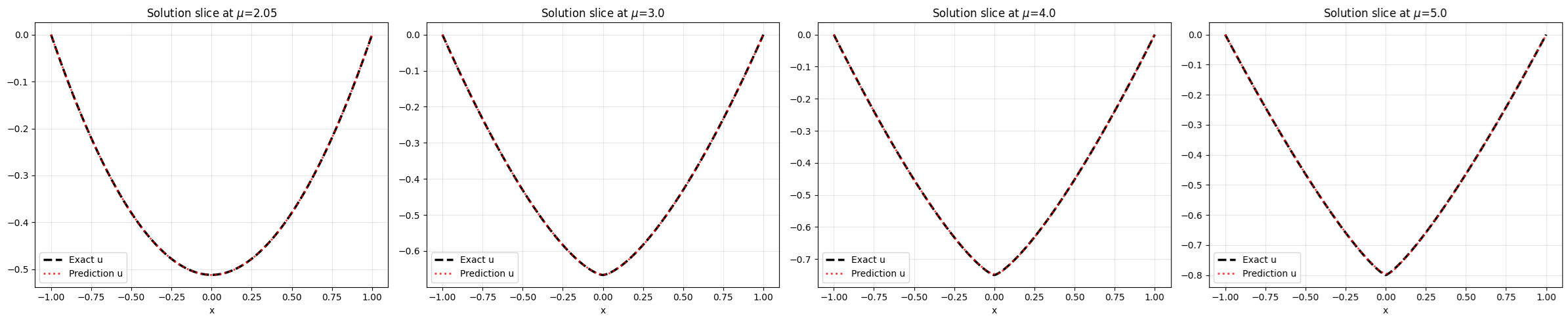}
    \caption{Comparison between the predicted and exact solutions evaluated at specific slices of the exponent $\mu \in \{2.05, 3, 4, 5\}$, where varying $\mu$ alters the operator nonlinearity $p(\mu)$ and the resulting solution profile.
    The black lines denote the exact solutions, while
    the red lines represent the predicted solutions.}
    \label{para_weak_slice}
\end{figure}

\begin{figure}[t] 
    \centering
    \includegraphics[width=1\textwidth]{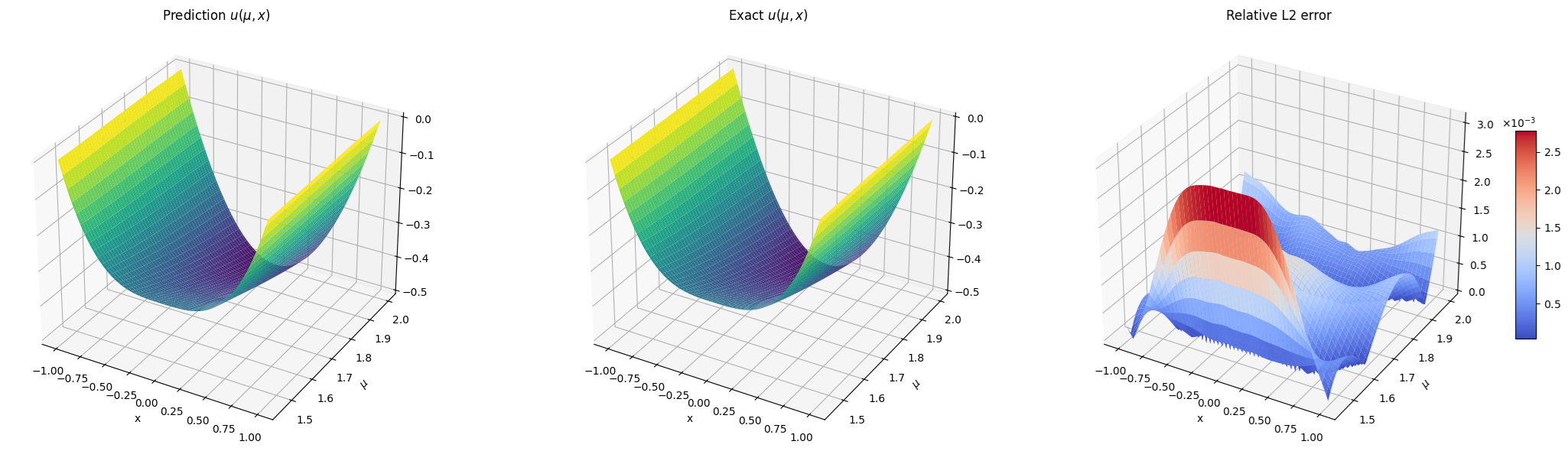}
    \caption{Comparison between the predicted and exact solutions for the parametric-exponent problem with $p(\mu)<2$. The left and center plots display the surfaces of the predicted and exact solutions for $u(\mu, x)$, respectively, while the right plot shows the relative $L^2$ error across the spatial and parameter domains.}
    \label{para_weak_p<2}
\end{figure}

\begin{figure}[t] 
    \centering
    \includegraphics[width=1\textwidth]{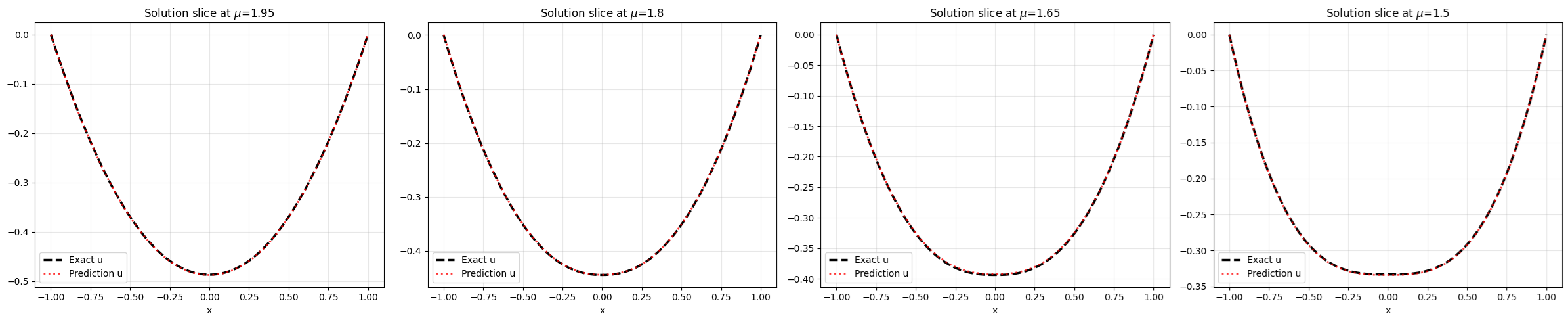}
    \caption{Comparison between the predicted and exact solutions evaluated at specific slices of the exponent $\mu \in \{1.95, 1.8, 1.65, 1.5\}$, where varying $\mu$ alters the operator nonlinearity $p(\mu)$ and the resulting solution profile. The black lines denote the exact solutions, while the red lines represent the predicted solutions.}
    \label{para_weak_slice_p<2}
\end{figure}

\begin{table}[t]
\centering
\small
\begin{minipage}[t]{0.495\textwidth}
\centering
\setlength{\tabcolsep}{3.5pt} 
\begin{tabular}{cc}
\toprule
$\mu$ & $L^2$ error (Average) \\
\midrule
2.05 & $1.2892 \times 10^{-3}$ \\
3.00 & $3.9298 \times 10^{-4}$ \\
4.00 & $7.3375 \times 10^{-4}$ \\
5.00 & $9.9399 \times 10^{-4}$ \\
\bottomrule
\end{tabular}
\end{minipage}
\hfill 
\begin{minipage}[t]{0.495\textwidth}
\centering
\setlength{\tabcolsep}{3.5pt}
\begin{tabular}{cc}
\toprule
$\mu$ & $L^2$ error (Average) \\
\midrule
1.95 & $1.5174 \times 10^{-3}$ \\
1.80 & $2.7256 \times 10^{-3}$ \\
1.65 & $3.8536 \times 10^{-3}$ \\
1.50 & $7.0549 \times 10^{-3}$ \\
\bottomrule
\end{tabular}
\end{minipage}
\caption{Average relative $L^2$ errors evaluated at specific slices of the exponent $\mu$, where the left and right tables correspond to the parametric domains $\calP_1=(2, 5.5)$ and $\calP_2=(1.45,2)$, respectively.}
\label{tab:para}
\end{table}

Next, we consider the numerical experiments for the parametric problems. We first categorize our empirical investigation into two primary setups based on the role of the parameter $\mu$:

\begin{enumerate}
    \item \textbf{Parametric exponent:} The parametric exponent $p(\mu)$ varies with $\mu$, directly altering the structural nonlinearity of the $p$-Laplacian operator.
    \item \textbf{Parametric data:} The exponent remains fixed at $p(\mu) = p$, while the external forcing term varies with the parameter $\mu$, i.e., $f = f(\mu,x)$.
\end{enumerate}

\paragraph{Case 1: Parametric exponent.} We extend the non-parametric framework of Example~\ref{setup:counter_example} to the parametric setting by defining $p(\mu) := \mu$ over the parameter domain $\calP$. Specifically, we fix the spatial domain as $\Omega = (-1, 1)$ and consider two distinct parameter domains: $\calP_1 = (2, 5.5)$ for $p(\mu) > 2$, and $\calP_2 = (1.45, 2)$ for $p(\mu) < 2$. Under these configurations, we solve the following parametric problem:
\[
-\diver_x \left(|\nabla_x u(\mu, x)|^{\mu-2} \nabla_x u(\mu, x)\right) = -1 \quad \text{in } \calP \times \Omega,
\]
subject to the boundary conditions $u(\mu, \pm 1) = 0$. The exact solution is given by:
\[
u^*(\mu,x) = \frac{\mu-1}{\mu} \left(|x|^\frac{\mu}{\mu-1}-1\right).
\]

\begin{table}[h]
\centering
\small
\begin{tabular}{ccc}
\toprule
$\mu$ & $L^2$ error at $p=1.8$ (Average) & $L^2$ error at $p=2.5$ (Average) \\
\midrule
1.5 & $5.4303 \times 10^{-4}$ & $4.8773 \times 10^{-4}$ \\
2.0 & $8.2709 \times 10^{-4}$ & $3.8844 \times 10^{-4}$ \\
2.5 & $7.2281 \times 10^{-4}$ & $3.0850 \times 10^{-4}$ \\
3.0 & $1.1338 \times 10^{-3}$ & $3.5475 \times 10^{-4}$ \\
\bottomrule
\end{tabular}
\caption{Average relative $L^2$ errors evaluated at specific parameter slices $\mu \in \{1.5, 2, 2.5, 3\}$ for the parametric external-forcing problem with fixed exponents $p=1.8$ and $p=2.5$.}
\label{tab:vrhs}
\end{table}

\begin{figure}[h] 
    \centering
    \includegraphics[width=1\textwidth]{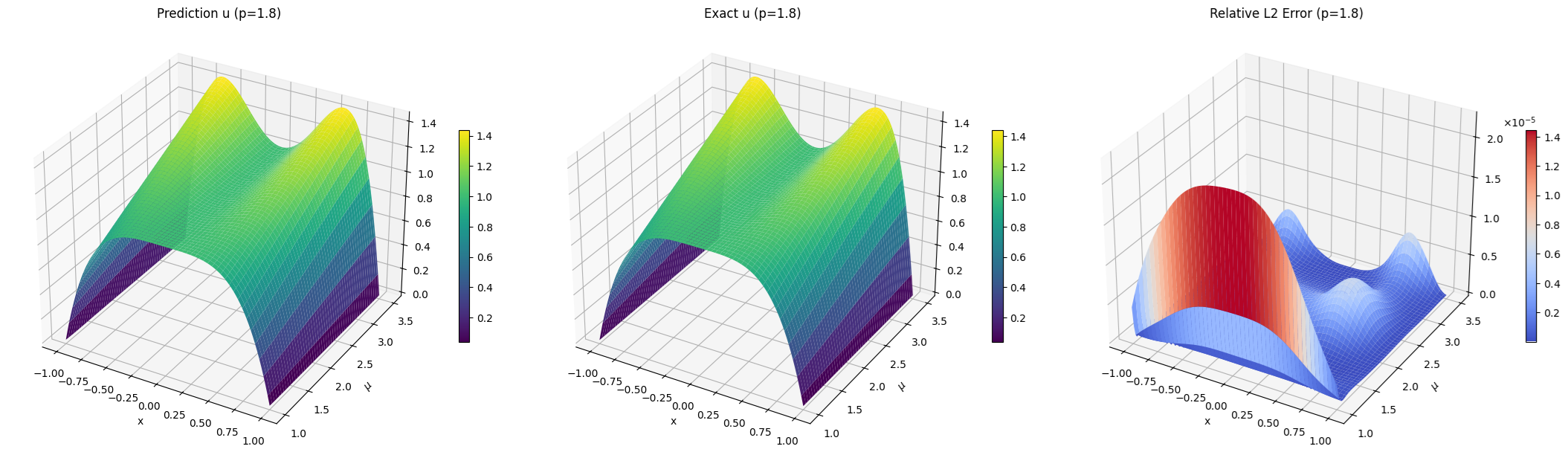}
    \caption{Comparison between the predicted and exact solutions for the parametric external-forcing problem at $p=1.8$. The left and center plots display the surfaces of the predicted and exact solutions for $u(\mu, x)$, respectively, while the right plot shows the relative $L^2$ error across the spatial and parameter domains.}
    \label{vrhs_p1.8}
\end{figure}

\begin{figure}[h] 
    \centering
    \includegraphics[width=1\textwidth]{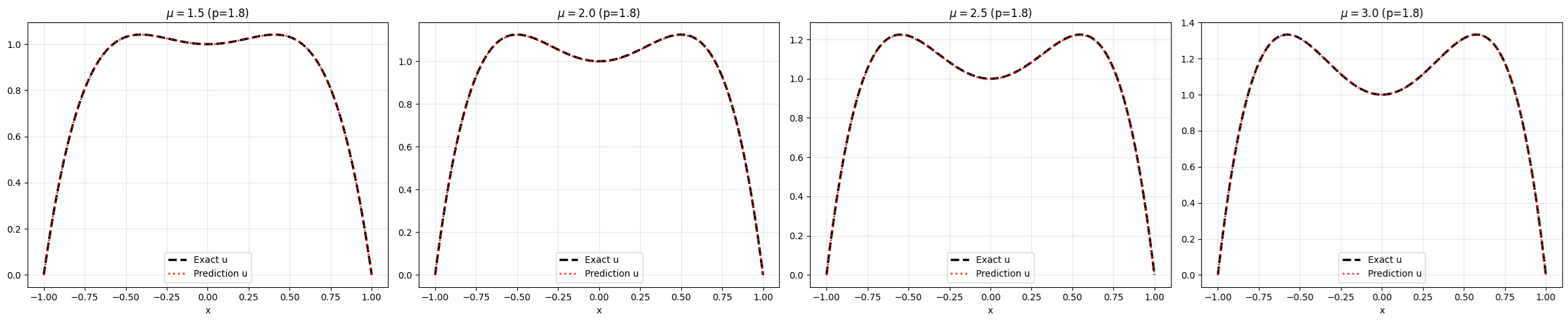}
    \caption{Comparison between the predicted and exact solutions evaluated at specific parameter slices $\mu \in \{1.5, 2, 2.5, 3\}$ for the parametric external-forcing problem with $p=1.8$, where varying $\mu$ alters the forcing term $f(\mu,x)$ and the resulting solution profile. The black lines denote the exact solutions, while
    the red lines represent the predicted solutions.}
    \label{vrhs_slice_p1.8}
\end{figure}

\begin{figure}[h] 
    \centering
    \includegraphics[width=1\textwidth]{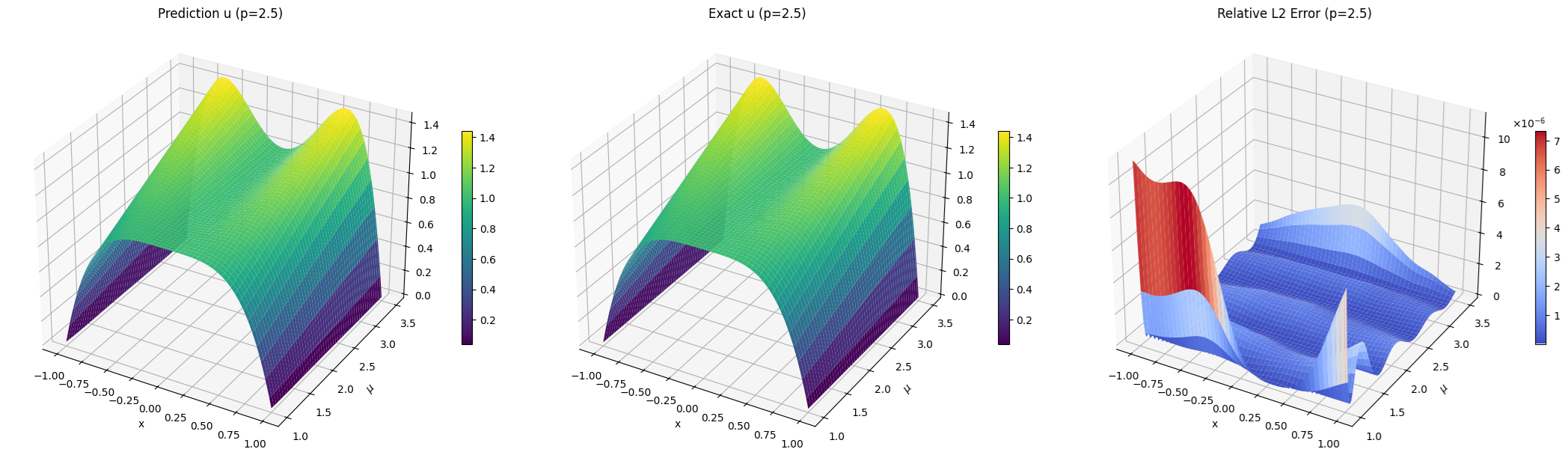}
    \caption{Comparison between the predicted and exact solutions for the parametric external-forcing problem at $p=2.5$. The left and center plots display the surfaces of the predicted and exact solutions for $u(\mu, x)$, respectively, while the right plot shows the relative $L^2$ error across the spatial and parameter domains.}
    \label{vrhs_p2.5}
\end{figure}

\begin{figure}[h] 
    \centering
    \includegraphics[width=1\textwidth]{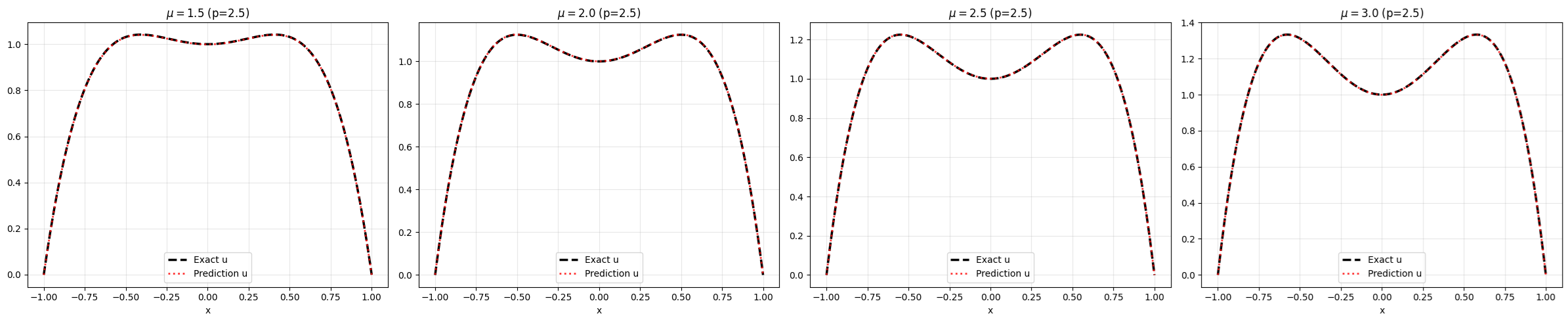}
    \caption{Comparison between the predicted and exact solutions evaluated at specific parameter slices $\mu \in \{1.5, 2, 2.5, 3\}$ for the parametric external-forcing problem with $p=2.5$, where varying $\mu$ alters the forcing term $f(\mu,x)$ and the resulting solution profile. The black lines denote the exact solutions, while
    the red lines represent the predicted solutions.}
    \label{vrhs_slice_p2.5}
\end{figure}

\paragraph{Implementation Details (Case 1).} We formulate the loss function using the $W^{-1,\mu'}(\Omega)$ norm for the spatial variable, where the dual norm is evaluated via \eqref{VPINN loss} using $20$ Sine basis functions. Spatial integration is performed via Gaussian quadrature by dividing the spatial domain $\Omega = (-1,1)$ into $10$ uniform subintervals with $10$ quadrature points per subinterval. For the parametric domain $\calP$, we adopt the Monte Carlo integration approach, sampling $100$ and $20$ random points per iteration from $\calP_1$ and $\calP_2$, respectively. The neural network architecture consists of $3$ hidden layers with $50$ neurons per layer and Swish activation functions. Training is conducted using the Adam optimizer for $15,000$ iterations with an initial learning rate of $0.001$, halved every $5,000$ iterations via a step decay scheduler, followed by $5,000$ fine-tuning iterations using L-BFGS.

The qualitative surface predictions and representative slice-wise profiles for $p(\mu) > 2$ ($\calP=\calP_1$) are depicted in Figures~\ref{para_weak} and \ref{para_weak_slice}, respectively, while those for $p(\mu) < 2$ ($\calP=\calP_2$) are shown in Figures~\ref{para_weak_p<2} and \ref{para_weak_slice_p<2}. Quantitative slice-wise relative $L^2$ errors for both parameter domains are summarized in Table~\ref{tab:para}.

\paragraph{Discussions (Case 1).} Inspecting Figures~\ref{para_weak_slice} and \ref{para_weak_slice_p<2}, together with Table~\ref{tab:para}, we observe that the trained model accurately reproduces the solution profiles across the entire range of the parameter, including the representative values reported therein. This 
demonstrates that a single network trained with the proposed weak loss can capture the varying nonlinearity of the operator over a continuous range of exponents, without the need for retraining for each individual parameter.

\paragraph{Case 2: Parametric data.} Next, we examine the parametric data problem, where the exponent remains fixed at $p(\mu) = p$, while the forcing term varies with $\mu$. We set the spatial domain as $\Omega = (-1, 1)$ and the parameter domain as $\calP = (1, 3.5)$. We consider two separate experiments by fixing the exponent at $p=1.8$ ($p<2$) and $p=2.5$ ($p>2$). In both setups, we adopt the manufactured solution $u(x, \mu) = (1 - x^2)(1 + \mu x^2)$, which satisfies homogeneous Dirichlet boundary conditions $g = 0$. The corresponding source term $f(\mu,x)$, derived via \eqref{para p-laplacian}, is given by:
\[
f(\mu,x) = (p-1) 2^{p-1} |x|^{p-2} |\mu - 1 - 2\mu x^2|^{p-2} (1 - \mu + 6\mu x^2).
\]

\paragraph{Implementation Details (Case 2).} The spatial loss is constructed using the $W^{-1,p'}(\Omega)$ norm via \eqref{VPINN loss} with $50$ Sine basis functions. Spatial integration is performed via Gaussian quadrature by partitioning $\Omega = (-1,1)$ into $20$ uniform subintervals with $20$ quadrature points per subinterval. Integration over the parameter domain $\calP = (1, 3.5)$ is handled via the Monte Carlo method with $70$ random points per iteration. The network architecture and optimization scheme are identical to Case 1, employing three hidden layers with $50$ neurons per layer and Swish activation functions. The model is trained using the Adam optimizer for $15,000$ iterations, followed by $5,000$ fine-tuning iterations using L-BFGS.

The qualitative surface predictions and representative slice-wise profiles for $p=1.8$ are depicted in Figures~\ref{vrhs_p1.8} and \ref{vrhs_slice_p1.8}, respectively, while those for $p=2.5$ are shown in Figures~\ref{vrhs_p2.5} and \ref{vrhs_slice_p2.5}. Quantitative slice-wise relative $L^2$ errors for both exponents are summarized in Table~\ref{tab:vrhs}.

\paragraph{Discussions (Case 2).} 
Similar to Case 1, Figures~\ref{vrhs_slice_p1.8} and \ref{vrhs_slice_p2.5} along with Table~\ref{tab:vrhs} confirm that the model accurately predicts the solution profiles across varying parameter slices. This demonstrates that the proposed weak formulation generalizes robustly to continuously varying forcing terms without requiring individual retraining.

\section{Conclusion}\label{sec:concl}
In this work, we have developed a robust training framework for PINNs applied to the $p$-Laplace equation, in which the PDE residual is measured 
in the dual norm, and the boundary residual in the fractional Sobolev norm. The robustness of the framework stems from its alignment with the natural 
functional setting of the problem: the dual norm formulation keeps the loss functional well-defined in low-regularity regimes where classical 
strong-form formulations may fail to apply, while the fractional Sobolev norm precisely captures the trace regularity of weak solutions. To substantiate this robustness theoretically, we have established a rigorous error analysis comprising both \textit{a priori} and \textit{a posteriori} estimates: the former guarantees the convergence of the proposed method, whereas the latter ensures that the training loss controls the true error in the appropriate Sobolev norm. Finally, we have extended the framework and its error analysis to the parametric setting, in which the data (such as the source term and the boundary condition) and the nonlinear exponent $p$ enter as additional inputs to the neural network. The resulting error bounds generalize consistently across the parameter space, so that a single trained model can accurately predict solutions for an entire family of 
$p$-Laplace problems without individual retraining.

Several directions remain for future research. First, from a computational standpoint, we plan to investigate more efficient techniques for evaluating 
the $W^{-1,p'}(\Omega)$ dual norm and the $\pb$ fractional Sobolev norm, including the optimal choice of test basis functions and the use of adversarial networks that minimize the dual-norm residual via min-max optimization. Second, we aim to extend the proposed framework to a broader class of nonlinear PDEs with analogous mathematical structure, such as generalized Newtonian fluid models, in which 
low-regularity and singular behaviors pose similar challenges.

\section*{Acknowledgements}
The authors thank Prof. Jae-Hwan Choi for the helpful discussions on this project.
Kyueon Choi is supported by the National Research Foundation of Korea Grant funded by the Korea Government (RS-2024-00336346 and RS-2024-00406821). Seungchan Ko is supported by National Research Foundation of Korea Grant funded by the Korean Government (RS-2023-00212227). Dohyun Kwon is partially supported by the National Research Foundation of Korea (NRF) grant funded by the Korea government (MSIT) (No. RS-2023-00252516, No. RS-2024-00408003,  No. RS-2026-25488663, and No. RS-2026-25613008), the POSCO Science Fellowship of POSCO TJ Park Foundation, and the Korea Institute for Advanced Study. This research was supported by the Yonsei University Research Fund of 2026-22-0251.

\bibliographystyle{abbrv}
\bibliography{references}

\end{document}